\numberwithin{equation}{section}
\newtheorem{cor}[equation]{Corollary}
\newtheorem{lem}[equation]{Lemma}
\newtheorem{thm}[equation]{Theorem}
\newtheorem{Example}[equation]{Example}
\newenvironment{ex}{\begin{Example}\rm}{\end{Example}}
\newtheorem{remark}[equation]{Remark}
\newenvironment{rmk}{\begin{remark}\rm}{\end{remark}}
\def\co{\colon\thinspace}
\newcommand{\Int}{\mbox{Int}}
\newcommand{\Diff}{\mbox{Diff}}
\newcommand{\Conf}{\mbox{Conf}}
\newcommand{\id}{\mbox{\bf id}}
\newcommand{\e}{\varepsilon}
\def\bu{\bullet}
\def\a{\alpha}
\def\G{\Gamma}
\def\de{\delta}
\def\g{\gamma}
\def\o{\omega}
\def\Om{\Omega}
\def\Si{\Sigma}
\def\O{\mathcal O}
\def\X{\mathcal X}
\def\M{\mathcal M}
\def\b{\beta}
\def\d{\partial}
\def\k{\kappa}
\def\th{\theta}
\def\r{\rho}
\def\o{\omega}
\def\s{\sigma}
\def\l{\lambda}
\def\Z{\mathbb{Z}}
\def\N{\mathbb{N}}
\def\S1{\bf S^1}
\newcommand{\R}{{\mathbb R}}
\newcommand{\C}{{\mathbb C}}
\def\equalsfill{$\m@th\mathord=\mkern-7mu
\cleaders\hbox{$\!\mathord=\!$}\hfill
\mkern-7mu\mathord=$}
\begin{document}

\abovedisplayskip=6pt plus3pt minus3pt
\belowdisplayskip=6pt plus3pt minus3pt

\title[Spaces of nonnegatively curved surfaces]
{\bf Spaces of nonnegatively curved surfaces}

\thanks{\it 2000 Mathematics Subject classification.\rm\ 
Primary 53C21, Secondary 57N20.
\it\ Keywords:\rm\ nonnegative curvature, space of metrics, 
infinite dimensional topology, absorbing.}\rm

\author{Taras Banakh}

\address{T. Banakh\\ Ivan Franko National University of Lviv (Ukraine) and Jan Kochanowski University in Kielce (Poland)} \email{t.o.banakh@gmail.com}

\author{Igor Belegradek}

\address{Igor Belegradek\\School of Mathematics\\ Georgia Institute of
Technology\\ Atlanta, GA 30332-0160}\email{ib@math.gatech.edu}


\date{}
\begin{abstract} 
We determine the homeomorphism type of the space of smooth 
complete nonnegatively curved metrics on $S^2$, $RP^2$, and $\C$ 
equipped with the topology of $C^\g$ uniform convergence on compact sets,
when $\g$ is infinite or is not an integer. If $\g=\infty$, the space of metrics is homeomorphic
to the separable Hilbert space. If $\g$ is finite and not an integer,
the space of metrics is homeomorphic to the countable power of the linear span of the Hilbert cube.
We also prove similar results for some other spaces of metrics including
the space of complete smooth Riemannian metrics on an arbitrary manifold.
\end{abstract}
\maketitle

\section{Introduction}

In this paper {\em smooth\,} 
means $C^\infty$, by a {\em manifold\,}
we mean a smooth finite-dimensional manifold without boundary,
the {\it $C^\g$ topology\,} on a set of maps between two fixed manifolds 
is the topology of uniform $C^{\g}$ convergence on compact smooth domains, where $0\le \g\le\infty$,
see Section~\ref{sec: Cgamma}. 
The $C^\g$ topology is called {\it H\"older\,} if $\g$ is finite and not an integer. 
Note that the $C^\g$ topology on any 
set of smooth maps is metrizable and separable.

Given a connected manifold $M$ of positive dimension
let $\mathcal R^{\g}(M)$ denote the space of all complete smooth Riemannian metrics
on $M$ endowed with the $C^\g$ topology. For finite $\g$ 
the mismatch between infinite smoothness of the metrics 
and their $C^\g$ convergence results in a larger number of 
converging sequences. 
This situation arises naturally in applications, e.g., according to~\cite{AndChe}  
if a sequence of smooth metrics $g_i$ on a closed manifold 
has uniform lower bounds on injectivity radius and Ricci curvature as well as an upper diameter bound, 
then for some diffeomorphisms $\phi_i$ the family  of pullback metrics $\phi_i^*g_i$
is $C^\g$ precompact for every $\g<1$,
see~\cite{Pet-conv-msri} for other results of this type.
 
Our first result  recognizes the homeomorphism type of $\mathcal R^{\g}(M)$:

\begin{thm} 
\label{thm: all metrics} If $M$ is a connected manifold of positive dimension, then
$\mathcal R^{\infty}(M)$ is homeomorphic to
$\ell^2$, and  $\mathcal R^{\g}(M)$ is homeomorphic to
$\Sigma^\o$ for every finite $\g$.
\end{thm}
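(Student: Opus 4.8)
The plan is to realize $\mathcal R^\g(M)$ as a convex subset of a separable metrizable topological vector space and then invoke the characterization and absorbing-set machinery of infinite-dimensional topology. Let $T^\g$ denote the topological vector space of smooth symmetric $2$-tensor fields on $M$ with the $C^\g$ topology; when $\g=\infty$ this is a separable Fr\'echet space, hence homeomorphic to $\ell^2$ by Anderson--Kadec, while for finite $\g$ it is a separable, metrizable, incomplete, infinite-dimensional space sitting densely inside the completion $\widehat T^\g$ of genuinely $C^\g$ tensors. The first step is to check that $\mathcal R^\g(M)$ is a convex subset of $T^\g$: positive-definiteness is preserved under convex combinations, and so is completeness, since $t g_0+(1-t)g_1\ge \min(t,1-t)\,g_0$ forces every Cauchy sequence of the combination to be Cauchy for a complete metric. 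By Dugundji's extension theorem a convex subset of a metrizable vector space is an absolute retract, so $\mathcal R^\g(M)$ is an AR; it is clearly infinite-dimensional and separable.

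For $\g=\infty$ I would then show that $\mathcal R^\infty(M)$ is completely metrizable. Writing $M=\bigcup_n K_n$ as an increasing exhaustion by compacta, the set of all smooth metrics is $\bigcap_n\{g: g|_{K_n}\text{ is positive definite}\}$, a $G_\de$ in the Polish space $T^\infty$; and completeness, phrased via Hopf--Rinow as ``every closed metric ball about a fixed basepoint lies in some $K_n$'', should likewise be arranged as a $G_\de$ condition. A $G_\de$ in a Polish space is Polish, so $\mathcal R^\infty(M)$ is a Polish, convex, infinite-dimensional AR, and by the Klee--Dobrowolski--Toru\'nczyk classification of such convex sets it is homeomorphic to $\ell^2$. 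The cleanest route is to verify Toru\'nczyk's discrete-approximation characterization of $\ell^2$-manifolds directly, using the convex structure to produce the required general-position and $Z$-set approximations.

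For finite $\g$ the decisive point is the \emph{descriptive complexity}. Inside $\widehat T^\g$ the subset of smooth tensors (all derivatives existing and continuous) should be shown to be an absolute $F_{\s\de}$ set, being a countable intersection over the orders of differentiation of $F_\s$-type conditions, and intersecting with the convex $G_\de$ of complete metrics leaves the complexity at absolute $F_{\s\de}$, i.e.\ $\mathbf\Pi^0_3$. Since $\Sigma^\o$ is the standard $\mathbf\Pi^0_3$-absorbing set, the goal becomes to prove that $\mathcal R^\g(M)$ is itself $\mathbf\Pi^0_3$-absorbing, whence the Bestvina--Mogilski uniqueness theorem for absorbing sets yields $\mathcal R^\g(M)\cong\Sigma^\o$. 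This requires (i) that $\mathcal R^\g(M)$ be an absolute $F_{\s\de}$ AR, established above; (ii) that it lie in a $\s Z$-set, a countable union of $Z$-sets, inside an ambient $\ell^2$; and (iii) strong $\mathbf\Pi^0_3$-universality.

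The hard part is (iii), the strong universality. One must show that every map $f$ from a space $X$ in the class $\mathbf\Pi^0_3$ into $\mathcal R^\g(M)$ can be approximated arbitrarily closely by closed embeddings with $Z$-set images, agreeing with $f$ on any prescribed closed set where $f$ was already such an embedding. The geometric leverage comes from convexity together with localized perturbations: one adds carefully chosen compactly supported symmetric-tensor ``bumps'', kept small in $C^\g$ but wild in higher derivatives so as to exploit the $C^\infty$-versus-$C^\g$ mismatch, to encode the coordinates of $X$ and to push images off any given $Z$-set, all while preserving positive-definiteness and completeness through the convexity estimates. Controlling these perturbations so that the resulting map is simultaneously an embedding, close to $f$, and $Z$-set-valued is the principal technical obstacle; by contrast the $\g=\infty$ argument is comparatively soft, because completeness alone, with no smoothness mismatch, keeps the space Polish and reduces everything to Toru\'nczyk's criterion.
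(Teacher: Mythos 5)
Your overall architecture coincides with the paper's (realize $\mathcal R^{\g}(M)$ as a convex subset of the space of smooth symmetric $2$-tensors, apply the classification of convex sets for $\g=\infty$, and prove $\M_2$-absorption plus invoke Bestvina--Mogilski uniqueness for finite $\g$), but there are two genuine gaps. First, for $\g=\infty$ the classification you cite is misstated: a Polish, convex, infinite-dimensional AR need \emph{not} be homeomorphic to $\ell^2$ --- the Hilbert cube is a counterexample. The correct hypothesis is that the closure of the convex set in the ambient Fr\'echet space is not locally compact, and this must actually be verified; the paper does so by showing that the closure of $\mathcal R^{\infty}(M)$, namely the convex set of nonnegative-definite smooth symmetric $2$-tensors, has no compact neighborhoods (restricting to a coordinate disk and using that an infinite-dimensional Fr\'echet space is nowhere locally compact). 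Your write-up never addresses local compactness, and ``infinite-dimensional'' does not substitute for it.

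Second, and more seriously, for finite $\g$ you correctly isolate strong $\M_2$-universality as the crux but leave it as ``the principal technical obstacle'' with a plan that attacks the definition head-on: approximating arbitrary maps from $F_{\s\de}$-spaces by $Z$-embeddings rel a closed set via tensor bumps. Nothing in that sketch controls injectivity, closedness of the image, or the rel-boundary condition, and this direct route is far harder than necessary. The paper sidesteps it: for a convex set that is homotopy dense in $\ell^2$, strong $\M_2$-universality follows from the existence of a \emph{single} closed $\M_2$-universal subset (\cite[Proposition 5.3.5]{BRZ-book}), so convexity upgrades plain universality to strong universality for free. That universal subset is then produced by an explicit $(\M_0,\M_2)$-preuniversality construction: writing $M=\bigcap_k A_k$ with $A_k$ $\sigma$-compact in a compactum $K$, one builds maps $f_k\co K\to C^k_\bu$ with $f_k(A_k)\subset C^\infty_\bu$ and $f_k(K\setminus A_k)$ disjoint from $C^{k+1}$, rescales them toward a basepoint, and sums the series $\sum_k 2^{-k}f_k$ inside an $\infty$-convex set, so that the first index $m$ with $z\notin A_m$ forces the sum to leave $C^{m+1}$. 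This series is the precise content behind your phrase ``encode the coordinates of $X$''; without it, and without the convexity reduction, the finite-$\g$ half of your argument does not close. (Relatedly, your assertion that the smooth tensors form an absolute $F_{\s\de}$ set in the $C^\g$ completion also needs a real argument --- the paper derives it from an $F_\s$-measurability statement for the inverse of the identity map from the $C^\infty$ to the $C^\g$ topology --- though that step is comparatively routine.)
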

Here $\ell^2$ is the separable Hilbert space, 
$\Si$ is the linear span of the standard Hilbert cube in $\ell^2$, and 
$\Si^\o$ is the product of countably many copies of $\Si$.
Since the space $\Si^\o$ may be unfamiliar to the reader,
let us mention that it is a locally convex linear space which 
is a countable union of nowhere dense sets, so
$\Si^\o$ is not completely metrizable.
If $\Om$ is either $\ell^2$ or $\Si^\o$, then it has the following properties, 
see Section~\ref{sec: absorb}:
\begin{itemize}
\item[(a)] $\Om$ is not $\s$-compact, and in particular, not locally compact.
\item[(b)] Any two open homotopy equivalent subsets of $\Om$ are homeomorphic. 
\item[(c)] The complement to any compact subset of $\Om$ is contractible.
\item[(d)] Any homeomorphism of two compact subsets of $\Om$ extends to a homeomorphism of $\Om$.
\end{itemize}
In (c) and (d) one can replace the phrase ``compact subset'' by ``$Z$-set'';
many examples of $Z$-sets can be found in Section~\ref{sec: absorb}.

A starting point in the proof of Theorem~\ref{thm: all metrics} is the
convexity of $\mathcal R^{\g}(M)$ in the locally convex linear space 
of all smooth symmetric $2$-tensors on $M$, with the $C^\g$ topology.  
As a caution we note that there is no meaningful
homeomorphism classification of convex subsets of locally convex linear spaces, 
e.g., the union of an open ball in $\ell^2$ with any subset of its boundary sphere
is convex. Such pathologies do not occur in the case at hand. 
There is a standard machinery that makes
recognizing the homeomorphism type of $\mathcal R^{\infty}(M)$ quite easy.
The case of finite $\g$ is more delicate and it exploits a certain argument 
in~\cite{Ban-sigma-to-the-omega} together with a substantial amount
of established techniques.

For a constant $\l$ let  $\mathcal R_{\ge \l}^{\g}(M)$
denote the subspace of $\mathcal R^{\g}(M)$ consisting of the metrics
with sectional curvature $\ge \l$; the space $\mathcal R_{>\l}^{\g}(M)$
is defined similarly. The subspaces are generally non-convex in $\mathcal R^{\g}(M)$, 
and little is known about their topological properties. A natural idea is to find a
homeomorphic parametrization of $\mathcal R_{\ge \l}^{\g}(M)$ by a space to which
techniques of infinite dimensional topology apply.
In this paper we do that when $M$ is $\C$, $S^2$ or $RP^2$, the surfaces
admitting a complete non-flat metric of nonnegative curvature, provided $\g\notin\Z$ 
and also $\l=0$ for $M=\C$. 
Here are our main results: 

\begin{thm} 
\label{thm: main compact}
If $M$ is $S^2$ or $RP^2$, then for every real $\l$ the spaces $\mathcal R_{\ge \l}^{\g}(M)$ 
and $\mathcal R_{>\l}^{\g}(M)$ are homeomorphic to
\begin{itemize}\vspace{-4pt}
\item[\textup{(1)}] 
$\ell^2$ if $\g=\infty$,\vspace{2pt}
\item[\textup{(2)}] 
$\Sigma^\o$ if $\g$ is finite and not an integer.
\end{itemize}
\end{thm}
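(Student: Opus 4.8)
The plan is to prove Theorem~\ref{thm: main compact} in the same spirit as Theorem~\ref{thm: all metrics}: realize each of the four spaces as one of the two model spaces $\Om\in\{\ell^2,\Si^\o\}$ through a characterization theorem. For $\g=\infty$ I would invoke Toru\'nczyk's characterization of $\ell^2$ (a separable completely metrizable absolute retract with the discrete approximation property, not locally compact) together with property~(b); for $\g$ finite and not an integer I would instead verify the hypotheses of the $\Si^\o$-absorbing criterion of~\cite{Ban-sigma-to-the-omega}. The Gaussian curvature $K=K_g$ depends continuously on the $2$-jet of $g$, so $g\mapsto\min_M K_g$ is $C^\g$-continuous only when $\g\ge 2$; hence $\mathcal R_{>\l}^{\g}(M)$ is open and $\mathcal R_{\ge\l}^{\g}(M)$ is closed in $\mathcal R^{\g}(M)$ for $\g\ge 2$ (in particular for $\g=\infty$), whereas for non-integer $\g<2$ the curvature bound is neither open nor closed, which is consistent with the non-completeness of the target $\Si^\o$.

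I would first dispose of the case $\g=\infty$. Here $\mathcal R^{\infty}(M)\cong\ell^2$ by Theorem~\ref{thm: all metrics}, and $\mathcal R_{>\l}^{\infty}(M)$ is an \emph{open} subset of it. Since $\mathcal R^{\infty}(M)$ is convex, hence contractible and open in $\ell^2$, property~(b) reduces the open case to a single assertion: $\mathcal R_{>\l}^{\infty}(M)$ is contractible. This is where the geometry of $S^2$ and $RP^2$ enters. By uniformization I would write $g=e^{2u}h$ with $h$ of constant curvature, so that $K_g=e^{-2u}(K_h-\Delta_h u)$ and the bound $K_g\ge\l$ becomes $\Delta_h u+\l e^{2u}\le K_h$; for fixed $h$ and $\l\ge 0$ this is a \emph{convex} condition on $u$, because $u\mapsto e^{2u}$ is pointwise convex. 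This exhibits the constrained metrics as a bundle of convex sets over the space of constant-curvature metrics, which is contractible (for $S^2$ via Smale's theorem $\Diff(S^2)\simeq O(3)$), giving a fibrewise straight-line contraction; for $\l<0$, where conformal convexity fails, I would contract instead along the normalized Ricci flow on surfaces, which preserves the lower curvature bound and deforms every metric to constant curvature. For the \emph{closed} space $\mathcal R_{\ge\l}^{\infty}(M)$ property~(b) is unavailable, so I would apply Toru\'nczyk's theorem directly: it is closed in the complete space $\ell^2$, hence completely metrizable, and the same fibrewise-convex parametrization furnishes the absolute retract property, non-local-compactness, and the discrete approximation property.

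For $\g$ finite and not an integer I would not rely on the open/closed structure (which fails for $\g<2$) and would instead check the $\Si^\o$-absorbing hypotheses of~\cite{Ban-sigma-to-the-omega} for both $\mathcal R_{>\l}^{\g}(M)$ and $\mathcal R_{\ge\l}^{\g}(M)$: that each is an absolute retract, that each is homotopy dense in a suitable completely metrizable ambient $\ell^2$-manifold of $C^\g$ metrics, and that each is of the appropriate Borel class ($F_{\s\de}$) and absorbs that class. The absolute retract property and homotopy density again come from the convex parametrization and the contractibility arguments above, transported to the $C^\g$ topology, while the Borel-complexity and absorption computations run parallel to the corresponding step for $\mathcal R^{\g}(M)$ in the proof of Theorem~\ref{thm: all metrics}.

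The main obstacle I anticipate is the geometric input behind contractibility and the convex parametrization, carried out uniformly in $\l$ and compatibly with the $C^\g$ topology. Fixing a conformal gauge forces one to quotient by, or to parametrize equivariantly over, the diffeomorphism group; the convexity of the conformal constraint is lost once $\l<0$, so the Ricci-flow contraction must be shown to depend $C^\g$-continuously on the initial metric while preserving $K\ge\l$ throughout. For finite $\g$ there is the additional low-regularity subtlety that $C^\g$ convergence does not control curvature when $\g<2$, so the choice of ambient $\ell^2$-manifold and the verification of homotopy density require care; together with the Borel-complexity and absorption bookkeeping, this is exactly the delicate point flagged in the introduction, where the argument of~\cite{Ban-sigma-to-the-omega} is essential.
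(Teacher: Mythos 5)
Your high-level strategy --- uniformize, exploit convexity of the conformal factor, and feed the result into a characterization of $\ell^2$ for $\g=\infty$ and into the $\M_2$-absorbing machinery for finite non-integer $\g$ --- matches the paper's. But the proposal omits the one step that makes the rest tractable: Theorem~\ref{thm: unif}, the homeomorphism $\mathcal R^{\g}_{\ge\l}(M)\cong D^{\g+1}(M)\times\O^{\g}_{\ge\l}(M)$ induced by $(\phi,u)\mapsto\phi^*e^{-2u}g_{_1}$, which holds precisely because for $\g\notin\Z$ the solution of the Beltrami equation depends H\"older-continuously on its dilatation. You gesture at a ``bundle of convex sets over the space of constant-curvature metrics'' and at an equivariant parametrization over the diffeomorphism group, but you never commit to this product structure, and without it your plan to verify the absorbing axioms (strong $\M_2$-universality, the $\s Z$ property, homotopy density in an $\ell^2$-manifold, membership in $\M_2$) \emph{directly} on the non-convex space $\mathcal R^{\g}_{\ge\l}(M)$ has no workable route. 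The paper checks each axiom separately on the two factors, where convexity is available for $\O^{\g}_{\ge\l}(M)$ (via Lemma~\ref{lem: convex embeds as homot dense}, Theorem~\ref{thm: criterion M_2-univer} and Corollary~\ref{cor: Cbullet is M2-universal}) and a local linear model is available for $D^{\g+1}(M)$, and then concludes with $\Si^\o\times\Si^\o\cong\Si^\o$ and $\ell^2\times\ell^2\cong\ell^2$. Relatedly, your write-up never identifies where the hypothesis $\g\notin\Z$ is actually used, which is the only thing separating this theorem from a formal exercise.

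On $\l<0$: the paper does \emph{not} use Ricci flow; it applies the convexity lemma (Lemma~\ref{lem: convexity of exp}) for every real $\l$. You are right to be suspicious there --- the final inequality $\l\bigl(te^{-2u_{_1}}+(1-t)e^{-2u_{_0}}\bigr)\ge \l e^{-2u}$ in that proof reverses when $\l<0$ --- but replacing convexity by the normalized Ricci flow trades a sign issue for a large unproved analytic package: you would need continuous dependence of the entire flow line, including its $t=\infty$ limit, on the initial metric in the $C^\g$ topology for non-integer $\g$ (possibly $\g<2$), preservation of $K\ge\l$, and compatibility with the gauge-fixing, none of which is sketched. Two further side claims are off. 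First, $\mathcal R^{\g}_{\ge\l}(M)$ is closed in $\mathcal R^{\g}(M)$ for \emph{every} $\g\ge 0$, not only for $\g\ge 2$: the paper proves the corresponding statement (Lemma~\ref{lem: C GH closed}) via Gromov--Hausdorff convergence and Alexandrov comparison, not via continuity of the curvature functional; and closedness is in any case compatible with the answer $\Si^\o$ because the ambient $\mathcal R^{\g}(M)$ is itself not completely metrizable. Second, the contractibility you need is not extracted from Smale's theorem (the relevant group is the one fixing a triple of points, and $RP^2$ needs a separate treatment); in the paper it falls out of the product decomposition itself, since $D^{\g+1}(M)$ is then a retract of the convex set $\mathcal R^{\g}(M)$.
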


\begin{thm} 
\label{thm: main C}
The spaces $\mathcal R_{\ge 0}^{\g}(\C)$ 
and $\mathcal R_{>0}^{\g}(\C)$ are homeomorphic to
\begin{itemize}\vspace{-4pt}
\item[\textup{(1)}] 
$\ell^2$ if $\g=\infty$,\vspace{2pt}
\item[\textup{(2)}] 
$\Sigma^\o$ if $\g$ is finite and not an integer.
\end{itemize}
\end{thm}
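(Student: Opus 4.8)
The plan is to convert the nonlinear curvature constraint into a linear one by passing to a conformal potential, thereby realizing $\mathcal R_{\ge 0}^{\g}(\C)$ and $\mathcal R_{>0}^{\g}(\C)$ as \emph{convex} sets in a locally convex space; this is precisely what repairs the non-convexity of these subspaces noted in the Introduction. By uniformization a complete nonnegatively curved metric on $\C$ is parabolic, hence conformal to the flat metric $g_0=|dz|^2$, and writing such a metric as $g=e^{2u}g_0$ the Gauss curvature is $K=-e^{-2u}\,\Delta u$, so $K\ge 0$ becomes the superharmonicity $\Delta u\le 0$ and $K>0$ becomes $\Delta u<0$ -- each a convex condition on $u$, although $K\ge0$ is badly nonconvex in the tensor $g$ itself. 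First I would set up the bookkeeping for the underlying conformal structure: a metric on the fixed surface $\C$ records both a complex structure (a Beltrami coefficient, valued pointwise in the unit disc, hence ranging over a convex set) and a conformal factor, and I would exhibit a homeomorphism of $\mathcal R_{\ge 0}^{\g}(\C)$ with a model built from a convex family $\mathcal U^{\g}$ of potentials fibered over the space of admissible conformal structures, with the strict inequality cutting out the analogous strictly superharmonic part.

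For $\g=\infty$ the relevant function space, with the $C^\infty$ topology, is a separable Fr\'echet space, and $\mathcal U^\infty$ is convex, cut out by the closed condition $\Delta u\le 0$ (resp.\ the $G_\delta$ condition $\Delta u<0$) together with completeness. I would verify that $\mathcal U^\infty$ is completely metrizable, infinite dimensional, and not locally compact, and then invoke the classical fact that a completely metrizable, separable, infinite dimensional convex subset of a Fr\'echet space which is not locally compact is homeomorphic to $\ell^2$ (Klee, Bessaga--Pe\l czy\'nski, Dobrowolski--Toru\'nczyk). To fold in the conformal-structure factor I would use that the space of admissible structures is itself of the same type and that $\ell^2\times\ell^2\cong\ell^2$, giving part~(1); properties (a)--(d) of the Introduction then transfer for free.

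For finite non-integer $\g$ the H\"older $C^\g$ topology on smooth functions is not complete, and the target is $\Sigma^\o$. Here I would lean directly on Theorem~\ref{thm: all metrics}: the same argument shows the ambient space of smooth tensors with the H\"older topology is homeomorphic to $\Sigma^\o$, so it remains to prove that the convex set $\mathcal U^\g$ is $\Sigma^\o$-absorbing. Concretely I would check the characterization of $\Sigma^\o$ from~\cite{Ban-sigma-to-the-omega}: $\mathcal U^\g$ is a convex, hence absolute retract, subset; it is a countable union of $Z$-sets of the correct Borel type; and it is strongly universal for the corresponding class. As in the finite-$\g$ part of Theorem~\ref{thm: all metrics}, it is the convexity obtained in the first step that supplies the absolute-retract and general-position inputs to strong universality, and $\Sigma^\o\times\Sigma^\o\cong\Sigma^\o$ again absorbs the conformal-structure factor.

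The main obstacle is the first step, and within it the completeness analysis: superharmonicity alone does not force completeness -- $e^{2u}g_0$ can close up to a punctured round sphere -- so one must isolate the precise completeness condition and show it is stable under convex combinations (using, for instance, that the geometric mean dominates the minimum along rays, together with the Cohn--Vossen control available under $K\ge 0$), so that $\mathcal U^\g$ is genuinely convex. Equally delicate is proving that the whole conformal parametrization, including the dependence on the conformal structure, is a homeomorphism simultaneously for all admissible $\g$. Once $\mathcal U^\g$ is known to be a convex set of the expected size, both conclusions follow from the infinite dimensional topology characterizations already used for Theorem~\ref{thm: all metrics}.
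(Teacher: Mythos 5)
Your top-level strategy coincides with the paper's: uniformize via Blanc--Fiala so that every metric in $\mathcal R_{\ge 0}^{\g}(\C)$ becomes $\phi^*e^{-2u}g_{_0}$, observe that the curvature and completeness constraints on the potential $u$ cut out a convex set in the space of smooth functions with the $C^\g$ topology, and then feed the pieces into the Dobrowolski--Toru\'nczyk theorem for $\g=\infty$ and into the $\M_2$-absorbing machinery for finite non-integer $\g$, finishing with $\ell^2\times\ell^2\cong\ell^2$ and $\Si^\o\times\Si^\o\cong\Si^\o$. Your completeness condition is exactly the paper's criterion $\a(u)\le 1$ on sublogarithmic growth, whose convexity is elementary, and your caution that strong universality is the delicate input is well placed: convexity alone does not supply it, and the paper has to manufacture closed $\M_2$-universal subsets of the potential space by an explicit series construction over functions supported in a disk.

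The genuine gap is your treatment of the other factor. You propose to record the conformal structure by a Beltrami coefficient ``valued pointwise in the unit disc, hence ranging over a convex set'' and to conclude that this factor ``is itself of the same type.'' This does not work as stated: the smooth functions $\mu$ with $|\mu|<1$ pointwise that actually arise from complete nonnegatively curved metrics on $\C$ are constrained by parabolicity of the resulting structure (a $\mu$ with $\sup|\mu|=1$ may uniformize to the disc rather than to $\C$), and there is no locally convex linear topology in which this admissible set is visibly convex, of the right Borel type, and homeomorphic to the parameter space one needs. The paper instead takes as the second factor the group $D^{\g+1}(\C)$ of normalized diffeomorphisms --- a manifestly non-convex object --- and proving that it is homeomorphic to $\ell^2$ when $\g=\infty$ and is $\M_2$-absorbing when $\g$ is finite (contractibility by an explicit rescaling homotopy, the ANR property by Yagasaki's method, the $\sigma Z$ property via incomplete operator images and a separate approximation argument on $\C$, and strong $\M_2$-universality via an embedded family of radial diffeomorphisms) occupies a substantial portion of the whole proof and cannot be extracted from the convexity framework. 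Until that factor is handled, your product decomposition reduces the theorem to a statement that is just as hard as the one you started with.
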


The homeomorphism of $\mathcal R_{\ge 0}^{\infty}(\C)$ and $\ell^2$ was already established in~\cite{BelHu-modr2}.

To explain the assumption $\g\notin\Z$, let us sketch the analytic ingredients 
of Theorem~\ref{thm: main C} in the case of $\mathcal R_{\ge 0}^{\g}(\C)$.
It is well-known, see~\cite{BlaFia}, that 
any complete metric of nonnegative curvature on $\C$ is conformally equivalent 
to the standard Euclidean metric $g_{_0}$. Hence it can be written as $\phi^*e^{-2u}g_{_0}$ where
$\phi$ is an orientation-preserving diffeomorphism of $\C$ and $u$ is a smooth function on $\C$.
By composing $\phi$ with a conformal automorphism of $\C$ one can assume that $\phi$ fixes
a pair of points, say $0$ and $1$, which determines $\phi$ uniquely. 
Such $\phi$'s form a closed contractible subgroup  of the diffeomorphism group of $\C$, which
we denote $D^{\g+1}(\C)$; the superscript $\g+1$ indicates that the group
is equipped with the $C^{\g+1}$ topology, which is natural because $\phi^*e^{-2u}g_{_0}$
involves the differential of $\phi$. Nonnegativity of the curvature
is equivalent to subharmonicity of $u$. Completeness of $e^{-2u}g_{_0}$
imposes further restrictions on $u$, making it roughly speaking of sublogorithmic growth,
and such $u$'s form a convex subset $\O_{\ge 0}^\g(\C)$ in the space of  smooth functions
equipped with the $C^\g$ topology, 
see~\cite{BelHu-modr2}. With the above restrictions on $\phi$ and $u$ 
the map $(\phi, u)\to \phi^*e^{-2u}g_{_0}$ becomes a continuous bijection.
Moreover, smooth dependence of solutions of Beltrami equation on the dilatation
shows that the continuous bijection is a homeomorphism for $\g\notin\Z$, see~\cite{BelHu-modr2, BelHu-modr2-err};
we expect this to fail when $\g$ is an integer. In summary, for $\g\notin\Z$
the space $\mathcal R_{\ge 0}^{\g}(\C)$ is homeomorphic to the product $D^{\g+1}(\C)\times\O_{\ge 0}^\g(\C)$
where $D^{\g+1}(\C)$ is a certain diffeomorphism group and $\O_{\ge 0}^\g(\C)$ is some convex set 
of smooth functions on $\C$. The homeomorphism
type of the factors $D^{\g+1}(\C)$, $\O_{\ge 0}^\g(\C)$ can be determined along the lines of 
the proof of Theorem~\ref{thm: all metrics}, and the conclusion is that 
$D^{\g+1}(\C)$, $\O_{\ge 0}^\g(\C)$ 
are each homeomorphic to $\ell^2$ or $\Si^\o$ depending on whether $\g$ is infinite or finite.
The case $\mathcal R_{>0}^{\g}(\C)$ is similar, and 
the proof of Theorem~\ref{thm: main compact} follows the same outline except 
that~\cite{BelHu-modr2} is not needed. 

The above proof requires that all metrics lie in the same 
conformal class, and in particular, the proof does not extend 
to $\mathcal R_{\ge \l}^{\g}(\C)$ with $\l<0$
or to $\mathcal R_{\ge\l}^{\g}(M)$ where $M$ is a closed surface of 
nonpositive Euler characteristic. 
The space $\mathcal R_{\ge \l}^{\g}(\C)$ with $\l>0$ is empty
due to the Myers theorem: Any complete Riemannian 
manifold of curvature $\ge \l>0$ is compact. 
Another essential feature of the proof
is the convexity of $\O_{\ge 0}^\g(\C)$ which prevents us from
treating spaces of metrics with
two sided curvature bounds such as the subspace of 
$\mathcal R_{\ge 0}^{\g}(\C)$ consisting of metrics with curvature $\le 1$.

Techniques of this paper apply to other convex sets of smooth maps not necessarily equipped
with the $C^\g$ topology, and whenever
possible the results are stated so that they can be easily used in other contexts,
see Sections~\ref{sec: aux function space}--\ref{sec: M2-univ operator D}.

{\bf Structure of the paper}
Section~\ref{sec: Cgamma} is a review of definitions and properties of the $C^\g$ topology.
The necessary infinite dimensional topology background is collected in Section~\ref{sec: absorb}.
Sections~\ref{sec: aux function space} and~\ref{sec: M2-univ operator D}
contain the main technical results needed to treat the case when $\g$ is finite. 
Theorem~\ref{thm: all metrics} is proved in
Sections~\ref{sec: R smooth} and~\ref{sec: R C^gamma}.
Basic information on the spaces of metrics on surfaces is collected
in Sections~\ref{sec: unif M} and~\ref{sec: unif of C}.
Finally, Theorems~\ref{thm: main compact}--\ref{thm: main C} are proved in
Sections~\ref{sec: O smooth}--\ref{sec: O absorb}.

{\bf Acknowledgments}
We are thankful to the referee for editorial suggestions. 
Belegradek is grateful for NSF support: DMS-1105045. Some of the ideas in the present paper
go back to Belegradek's joint work with Jing Hu, which appeared in her thesis~\cite{Hu-thesis}.
The results we prove here are much stronger, e.g., one of the main theorems in~\cite{Hu-thesis}
is the topological homogeneity of $\mathcal R_{\ge 0}^\g(S^2)$ when $\g\ge 2$ and $\g\not\in \Z$,
while we determine the homeomorphism type of $\mathcal R_{\ge 0}^\g(S^2)$ for $\g\not\in \Z$.

\section{A review of $C^\g$ topology}
\label{sec: Cgamma}

In this section we recall definitions and basic properties of the $C^\g$
topology.
We stress that {\em smooth\,} always means $C^\infty$.
Fix $\g\in [0,\infty]$, and if $\g$ is finite write it uniquely
as $\g=k+\a$ where $k$ is a nonnegative integer and $\a\in [0,1)$.

If $\a=0$, then the linear space $C^k(D, \R^n)$ of $k$-times continuously differentiable 
maps from a smooth compact domain $D\subset\R^m$ to $\R^n$ is Banach with respect
to the norm 
\begin{equation}
\|f\|_{_{C^k(D,\R^n)}}=\max\{|f^{(p)}(x)|\co |p|\le k,\, x\in D\}
\end{equation}
where $f^{(p)}$ is a partial derivative for the 
multi-index $p$ of order $|p|$. 
If $\a\in (0,1)$, we say that 
a map $f\in C^0(D,\R^n)$ is {\em $\a$-H\"older\,} if the following
quantity is finite
\[
\|f\|_{_{C^\a(D,\R^n)}}=\sup\left\{\frac{|f(x)-f(y)|}{|x-y|^\a}\co x,y\in D,\ x\neq y\right\}
\]
where $|\cdot|$ is the Euclidean norm.
The maps in $C^k(D, \R^n)$ whose $k$-th partial derivatives are all $\a$-H\"older
form a linear subspace, denoted $C^{k+\a}(D; \R^n)$, which is Banach 
with respect to the norm
\[
\|f\|_{_{C^{k+\a}(D,\R^n)}}=\|f\|_{_{C^k(D,\R^n)}}+\sup_{|\mu|=k}\|D^\mu f\|_{_{C^\a(D,\R^n)}}.
\] 

A map of smooth manifolds 
is called $C^{k+\a}$ if whenever it is written in local coordinates
as a map between open sets of Euclidean spaces
its restriction to any compact smooth domain 
has the property that all $k$-th partial derivatives are $\a$-H\"older.
With this definition the elements of $C^{k+\a}(D, \R^n)$ are precisely
the $C^{k+\a}$ maps from $D$ to $\R^n$ because if 
$D^\prime$ is another compact domain in $\R^m$, then the composite
of a map in $C^{\infty}(D^\prime, D)$  with a map in $C^{k+\a}(D, \R^n)$ 
is in  $C^{k+\a}(D^\prime, \R^n)$, see~\cite[Section 2.2]{BHS-holder}.

Clearly any $C^{k+\a}$ map is $C^k$. Less trivially, for every
$0<\g<\b\le \infty$ any $C^{\b}$ map is $C^\g$,
see~\cite[Section 6.8]{GilTru} for the case when $\g$, $\b$
are not integers. The product of two real valued $C^{k+\a}$ functions is $C^{k+\a}$,
see~\cite[Section 4.1]{GilTru}.
If $k\ge 1$, then the composition of $C^{k+\a}$ maps is $C^{k+\a}$,
and the inverse of an invertible $C^{k+\a}$ map is $C^{k+\a}$, 
see~\cite[Sections 2.1--2.2]{BHS-holder}.

We are now ready to define the $C^\g$ topology on the set $C^\g(M,N)$
of $C^\g$ maps between smooth manifolds $M$, $N$, cf.~\cite[Chapters 7--8]{Pal-glob-anal}.
Fix a smooth embedding $i_{_{N}}$ of $N$ as a closed submanifold of some $\R^n$.
Postcomposing with $i_{_{N}}$ defines an inclusion of $C^\g(M,N)$
into $C^\g(M,\R^n)$. We shall define the $C^\g$ topology on
$C^\g(M,\R^n)$ and then give $C^\g(M,N)$ the subspace topology.
Cover $M$ by a countable family of smooth compact domains $D_j$ 
each lying in some coordinate chart. The Banach norm $\|\cdot\|_{_{C^{k+\a}(D_j,\R^n)}}$ 
on $C^\g(D_j,\R^n)$ defines a seminorm $p_j$ on $C^\g(M,\R^n)$ by restriction, i.e., 
$p_j(f)=\|f\vert_{_{D_j}}\|_{_{C^{k+\a}(D_j,\R^n)}}$. The countable family of seminorms
$p_j$ gives $C^\g(M,\R^n)$ a Fr{\'e}chet space structure,
and one can show that the structure is independent of the choices involved.

The space $C^\infty(M,\R^n)=\bigcap_{k\in\mathbb N} C^k(M, \R^n)$
gets the associated Fr{\'e}chet structure given by seminorms
$\|\cdot\|_{_{C^k(D_j,\R^n)}}$ where both $k$ and $j$ vary.
More details can be found in~\cite[Chapter 10, Example 1]{Tre-TVS-book} 
and~\cite[Sections 1 and 4 in Chapter 2]{Hir}.

Both $C^k(M,\R^n)$ and $C^\infty(M,\R^n)$ are separable, see e.g.~\cite[Lemma 3.2]{BelHu-modr2}, 
while it is well-known that 
$C^{k+\a}(M,\R^n)$ is non-separable if $\a\neq 0$, except when $\dim M$ or $n$ vanish.

\section{Infinite dimensional topology background}
\label{sec: absorb}

In this section any {\em space\,} is 
assumed metrizable and separable, and any {\it map\,} is assumed continuous;
we do not follow the convention in other sections.

By a {\em subspace\,} we always mean ``subset with subspace topology''; we never use
the term to mean a linear subspace.

A basic goal of infinite-dimensional topology is to give a homeomorphism classification
of naturally occurring spaces such as topological groups, linear spaces and their convex subsets.
A closely related problem is a characterization of $\Om$-manifolds for a suitable space $\Om$.
Here an {\em $\Om$-manifold\,} is a space in which every point 
has a neighborhood homeomorphic to an open subset of $\Om$.
Satisfactory answers are known, e.g., when $\Om$ is $\ell^2$ or 
$\Om$ is absorbing in the sense of Bestvina-Mogilski~\cite{BesMog}.

The exposition below follows~\cite{BP-book, BRZ-book, BesMog, DobMog-survey}.
The subject is quite technical so we attempt to limit the terminology to what
is relevant for our applications.

A {\it linear metric space\,} is a vector space with a translation-invariant metric.
A linear metric space is {\it locally convex\,} if its topology is given by a countable family of seminorms.
Any normed space is locally convex. A {\it Fr{\'e}chet space\,} is a complete locally convex
linear metric space. The {\it dimension of a convex subset $C$\,} is the dimension
of the vector space affinely isomorphic to the affine hull $\mathrm{Aff} C$ of $C$. 

Any convex set in a locally convex linear metric 
space is an AR (absolute retract), see~\cite[Corollary II.5.3]{BP-book}.
A space is an AR if and only if it is a contractible 
ANR (absolute neighborhood retract), see~\cite[Theorem II.5.1]{BP-book}.
A space that is locally an ANR is an ANR~\cite[Theorem II.5.1]{BP-book}.

A subset $B\subset X$ is {\it homotopy dense\,} if there is a homotopy
$h\co X\times I\to X$ with $h_0=\mathrm{id}$ and $h(X\times (0,1])\subset B$.
An embedding with a homotopy dense image is {\it homotopy dense}.
A subset is {\it homotopy negligible\,} if its complement is homotopy dense.
If $X$ is an ANR, then a subset $B$ of $X$ is homotopy dense 
if and only if each map $I^n\to X$ with $\d I^n\subset B$ can be uniformly approximated
rel boundary by maps $I^n\to B$, see~\cite[Theorem 1.2.2]{BRZ-book}. 
Other characterizations of homotopy dense subsets can be found in~\cite[Exercise 12 in 1.2]{BRZ-book}.

It follows that a subset $B$ is homotopy dense if and only if 
it is locally homotopy dense, see~\cite[Exercise 3 in 1.2]{BRZ-book}; the latter means that
each point has a neighborhood $U$ such that $U\cap B$ is homotopy dense in $U$.

Any homotopy dense subset of an $\ell^2$-manifold is infinite-dimensional~\cite[Exercise 6 in 1.3]{BRZ-book}.
Any homotopy dense subset of an ANR is an ANR, so in particular, any homotopy dense
subset of an $\ell^2$-manifold is an ANR.

Given an open cover $\mathcal U$ two maps $f,g\co Y\to X$ are {\it $\mathcal U$-close\,} 
if for every $y\in Y$ there is $U\in\mathcal U$ with $f(y), g(y)\in U$.
Let $Q$ denote the Hilbert cube.

A space $X$ has the {\em Strong Discrete Approximation Property (SDAP)\,}
if for every open cover $\mathcal U$ of $X$ each map $Q\times \N\to X$
is $\mathcal U$-close to a map $g\co Q\times \N\to X$ such that every point
of $X$ has a neighborhood that intersects at most one set of the family 
$\{g(Q\times\{n\})\}_{n\in\N}$. A space is homeomorphic
to a homotopy dense subset of an $\ell^2$-manifold if and only if 
it is an ANR with SDAP~\cite[Theorem 1.3.2]{BRZ-book}.  

The following is proved in~\cite[Propositions 5.2.1 and 5.2.6]{BRZ-book}: 

\begin{lem}
\label{lem: convex embeds as homot dense}
Let $C$ be a convex subset of a separable Fr{\'e}chet space. If the closure
$\bar C$ of $C$ is either not locally compact, or not contained in $\mathrm{Aff} C$, 
then $C$ is homeomorphic to a homotopy dense
subset of $\ell^2$. 
\end{lem}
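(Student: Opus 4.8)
The plan is to reduce the statement to the characterization recalled above: a space is homeomorphic to a homotopy dense subset of an $\ell^2$-manifold if and only if it is an ANR with SDAP. One of the two required properties is free. Since $C$ is a convex subset of a separable Fr\'echet space, which is in particular a locally convex linear metric space, $C$ is an AR by the cited result, hence a contractible ANR. Thus the whole content lies in extracting SDAP from the hypothesis on $\bar C$, and then in replacing the resulting $\ell^2$-manifold by $\ell^2$ itself.

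To verify SDAP I would begin with an arbitrary open cover $\mathcal U$ and an arbitrary map $f\co Q\times\N\to C$, and perturb it within $\mathcal U$ so that the images $g(Q\times\{n\})$ form a discrete family. The natural device is to translate the $n$-th cube by a small vector $\e_n v_n$, where $v_1,v_2,\dots$ is a null sequence of linearly independent directions along which $C$ has genuine room: convexity guarantees that sufficiently small translations of this shape keep the perturbed cubes inside $C$, while independence of the $v_n$ makes the resulting family locally finite once the $\e_n$ shrink fast enough. The hypothesis is exactly what produces such directions, and this is where the dichotomy enters. If $\bar C$ is not locally compact, then near some point of $C$ arbitrarily small relative neighborhoods fail to be precompact, and convexity converts this non-compactness into infinitely many independent short segments emanating from that point, which serve as the $v_n$. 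If instead $\bar C\not\subset\mathrm{Aff}\,C$, pick $p\in\bar C\setminus\mathrm{Aff}\,C$; then $\mathrm{Aff}\,C$ is non-closed and hence infinite-dimensional, and the directions pointing from points of $C$ toward $p$ are transverse to $\mathrm{Aff}\,C$ and, suitably rescaled, remain admissible inside $C$, again yielding infinitely many independent $v_n$.

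With SDAP established, the characterization embeds $C$ as a homotopy dense subset of some $\ell^2$-manifold $M$. Homotopy density then forces $M$ to be contractible: the defining homotopy $h$, restricted to $C\times I$, stays in $C$ and exhibits $\mathrm{id}_C$ as homotopic in $C$ to a retraction $h_1\co M\to C$, so the inclusion $C\hookrightarrow M$ is a homotopy equivalence and $M$ inherits the contractibility of the AR $C$. Since a contractible $\ell^2$-manifold is homeomorphic to $\ell^2$, carrying $C$ across such a homeomorphism displays it as a homotopy dense subset of $\ell^2$, which is the assertion.

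The SDAP verification is the step I expect to be the main obstacle, and within it the case $\bar C\not\subset\mathrm{Aff}\,C$ with $\bar C$ locally compact---for instance the convex hull of a sequence converging to a point outside its affine hull, whose closure is compact. There all the room must be squeezed out of the single fact that the affine hull is non-closed, and the delicate point is to produce the independent admissible directions while certifying that the translated cubes stay inside $C$ and form a locally finite family.
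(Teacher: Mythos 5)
The paper does not actually prove this lemma: it is quoted directly from \cite[Propositions 5.2.1 and 5.2.6]{BRZ-book}, so your attempt is being measured against that reference rather than against an argument in the text. Your skeleton is the right one and is how the cited results are organized: $C$ is an AR by convexity, one wants SDAP, the characterization then embeds $C$ homotopy densely into an $\ell^2$-manifold, and contractibility (inherited through homotopy density from the AR $C$) forces that manifold to be $\ell^2$. The AR step and the endgame are correct.

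The SDAP verification, however, has a genuine gap, and it is not only in the compact-closure case you flag at the end. Your device --- replace $f|_{Q\times\{n\}}$ by its translate by $\e_n v_n$ with $(\e_n v_n)$ a null sequence of independent directions --- fails on two counts. First, convexity does not make small translates of a subset of $C$ stay in $C$: translating a vertex of a simplex by an arbitrarily short vector already leaves the simplex unless the vector points inward, so the admissible perturbations are convex combinations $(1-\e)f+\e c_n$ with $c_n\in C$, not translations. Second, and fatally, a null sequence of perturbations cannot produce a locally finite family: if $f$ sends every $Q\times\{n\}$ to one point $x_0\in C$, the perturbed images accumulate at $x_0$, so every neighborhood of $x_0$ meets all but finitely many of them --- precisely what SDAP forbids. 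Linear independence of the $v_n$ does not help, since local finiteness is a statement about neighborhoods in $C$, not about linear position. Note that $\mathcal U$-closeness does not require the perturbation to shrink as $n\to\infty$; the correct constructions exploit exactly this, pushing the $n$-th image a definite amount toward the ``defect'' of $C$ (off to infinity when $\bar C$ is not locally compact, or toward a point of $\bar C\setminus\mathrm{Aff}\, C$ in the other case), with separation certified by continuous linear functionals. For the first alternative there is in fact a much shorter route using tools already quoted in the paper: $\bar C$ is a closed convex non-locally-compact subset of a separable Fr\'echet space, hence homeomorphic to $\ell^2$ by \cite[Theorem 2]{DobTor}, and $C$ is homotopy dense in $\bar C$ by Lemma~\ref{lem:conv-hdense}, so no SDAP argument is needed there at all. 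The second alternative, where $\bar C$ may be a Keller cube, is the genuinely delicate one, and there your sketch supplies no mechanism that survives the local-finiteness objection; that case is the real content of \cite[Proposition 5.2.6]{BRZ-book}.
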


A closed subset $B$ of a space $X$ is a {\it $Z$-set\,} if every map $Q\to X$
can be uniformly approximated by a map whose range misses $B$. 
A space is called $\s Z$ if it is a countable union of $Z$-sets.
If $X$ is an ANR, then a subset $B$ of $X$ is a $Z$-set if and only if $B$ is 
closed and homotopy negligible, see~\cite[Theorem 1.4.4]{BRZ-book} and~\cite[Proposition V.2.1]{BP-book}.

\begin{ex}
(1) The union of a locally finite family of $Z$-sets in an ANR is a $Z$-set~\cite[Exercise 1 in 1.4]{BRZ-book}.

(2) If $X$, $Y$ are ANR and $B$ is a $Z$-set in $X$, then $B\times Y$ is a $Z$-set in $X\times Y$. 

(3) If $X$ is homeomorphic to a homotopy dense subset of an $\ell^2$-manifold,
then any compact subset is a $Z$-set~\cite[Proposition 1.4.9]{BRZ-book}.
\end{ex}

There are some variations of the notion of a $Z$-set that are all equivalent
when the ambient space is a homotopy dense subset of an $\ell^2$-manifold,
see~\cite[Section 1.4]{BRZ-book} and~\cite[Proposition V.2.1]{BP-book}.
In particular, a {\it strong $Z$-set} in a space $X$
is a closed subset $A$ of $X$ such that for any open cover $\mathcal U$
there is a map $f\co X\to X$ that is $\mathcal U$-close to the identity
and such that $A$ is disjoint from the closure of $f(X)$.
Actually, we shall never use the definition because any strong $Z$-set is a $Z$-set,
while in a homotopy dense subset of an $\ell^2$-manifold any $Z$-set is strong, 
see ~\cite[Proposition 1.4.3]{BRZ-book}.
If an ANR is a countable union of strong $Z$-sets, then it
admits a homotopy dense embedding into an $\ell^2$-manifold, 
see~\cite[Proposition 1.9]{BesMog} and~\cite[Proposition 1.4.10]{BRZ-book}.

Every $Z$-set is nowhere dense (otherwise, its closure, which equals to the $Z$-set
itself, contains an open set $U$
and the constant map from the Hilbert cube to $U$ cannot be approximated by maps
whose range misses $U$). 

A map is a {\it $Z$-embedding\,} if its image is a $Z$-set. 
If $\mathcal C$ is a class of spaces, then a space $X$ is {\it strongly $\mathcal C$-universal\,} if 
for every open cover $\mathcal U$ of $X$, every $C\in\mathcal C$, every closed subset $B\subset C$, and every
map $f\co C\to X$ that restricts to a $Z$-embedding on $B$ there is a $Z$-embedding
$\bar f\co C\to X$ with $\bar f\vert_B=f\vert_B$  such that $f$, $\bar f$ are $\mathcal U$-close.
Note that if $X$ is strongly 
$\mathcal C$-universal, then any space in $\mathcal C$ is homeomorphic to a $Z$-set of $X$.

Given a class $\mathcal C$  of spaces, a space $X$ is {\it $\mathcal C$-absorbing\,}
if the following holds: \begin{itemize}
\item[\textup(i)] $X$ is strongly $\mathcal C$-universal,
\item[\textup(ii)] $X$ is the union of countably many $Z$-sets,
\item[\textup(iii)] $X$ is homeomorphic to a homotopy dense subset of an $\ell^2$-manifold,
\item[\textup(iv)] $X$ is the union of a countably many closed subsets homeomorphic to spaces in $\mathcal C$.
\end{itemize}

\begin{ex} 
Since $Z$-sets are nowhere dense, any $\mathcal C$-absorbing space is meager in itself.
Therefore by the Baire category theorem 
no $\mathcal C$-absorbing space is locally compact or completely metrizable
(no matter what $\mathcal C$ is).
In particular, $\ell^2$ is not $\mathcal C$-absorbing.
\end{ex}

\begin{ex}
The linear span $\Sigma$ of the Hilbert cube in $\ell^2$ is $\M_0$-absorbing,
where $\M_0$ is the class of compact spaces, see~\cite[Theorem 6]{Mog-sigma-comp-1984}.
Note that $\Sigma$ is $\sigma$-compact.
\end{ex}

\begin{ex}
\label{ex: M2 and Sigma to the omega}
The space $\Si^\o$ is $\M_2$-absorbing, where
$\M_2$ be the class of spaces 
homeomorphic to $F_{\sigma\delta}$-sets in compacta, see~\cite[Exercise 3 in 2.4]{BRZ-book}. 
Note that $\Si^\o$ is not $\s$-compact (the product of infinitely many $\s$-compact noncompact
spaces is never $\s$-compact). 
\end{ex}

The term ``absorbing'' seem to have come from the following property:
If $\mathcal C$ is the class of compact spaces, and $X$
is a $\mathcal C$-absorbing space embedded into $\ell^2$,
then any compact subset of $\ell^2$ can be mapped into $X$ by a homeomorphism of $\ell^2$, 
see ~\cite[Theorem V.6.2]{BP-book}.

Absorbing spaces have remarkable properties
some of which are listed below (in a weakened form). 
Suppose $\mathcal C$  is a class of spaces that is closed under homeomorphisms, passing to
closed subsets, and closed-unions (the last property means that if $C$ is the union of two closed
subsets that belong to $\mathcal C$, then $C\in\mathcal C$). Suppose also that $\mathcal C$ is {\em local\,},
i.e., a space $X$ belongs to $\mathcal C$ if and only if each point of $X$ has a neighborhood that belongs to 
$\mathcal C$. 
Then the following holds:
\begin{itemize}
\item (Uniqueness) Any two homotopy equivalent $\mathcal C$-absorbing spaces
are homeomorphic, see~\cite[Theorem 3.1]{BesMog}.
\vspace{2pt}
\item ($Z$-set unknotting) If $A$, $B$ are $Z$-sets in a $\mathcal C$-absorbing
space $X$, then any homeomorphism $A\to B$ that is homotopic to the inclusion
of $A$ into $X$ extends to a homeomorphism of $X$~\cite[Theorem 3.2]{BesMog}.
\vspace{2pt}
\item If $X$ is $\mathcal C$-absorbing, then $X$ is $\mathcal F_0(X)$-absorbing,
where $\mathcal F_0(X)$ is the class of spaces homeomorphic to closed subsets 
of $X$~\cite[Proposition 1.11]{BanCau-interplay}. In particular, every closed
subset of $X$ admits a $Z$-embedding into $X$. 
\vspace{2pt}
\item If $\Om$ is an $\mathcal C$-absorbing AR, then one has:
\vspace{2pt}
\begin{itemize}
\vspace{2pt}
\item[\textup{(1)}]
At each point of a $\mathcal C$-absorbing space $X$ there is a local basis
of opens sets homeomorphic to $\Om$. 
\vspace{2pt}
\item[\textup{(2)}]
A space $X$ is $\mathcal C$-absorbing if and only if $X$ is an $\Om$-manifold.
\vspace{2pt}
\item[\textup{(3)}]
Any $\Om$-manifold is homeomorphic to an open subset of any other {$\Om$-manifold}.
\end{itemize}
\end{itemize}
The assertions (2)--(3) are from~\cite[Exercise 4 in 1.6]{BRZ-book}, and for 
completeness we now prove (1). 
If $X$ is $\mathcal C$-absorbing, then $X$ can be identified with a homotopy dense subset
of an $\ell^2$-manifold $L$,
so for any open $U\subset L$ the set $U\cap X$ is homotopy dense in 
$U$~\cite[Corollary 1.2.3]{BRZ-book}. Hence if $U$ is contractible, then so is $U\cap X$
(because the inclusion of a homotopy dense subset is a homotopy equivalence). 
Since $L$ has a local basis of contractible open sets, so does $X$.
Each of the contractible open sets is homeomorphic to $\Om$ by uniqueness of
homotopy equivalent $\mathcal C$-absorbing spaces.

\section{$\mathcal M_2$-universality of some functions spaces}
\label{sec: aux function space}

The main result of this section is Corollary~\ref{cor: Cbullet is M2-universal}
establishing $\M_2$-universality of a certain class of function spaces. 
Throughout this section $n$ is a nonnegative integer. 
Fix a manifold $V$ exhausted by an increasing sequence
of compact domains $D_j$.
As in Section~\ref{sec: Cgamma} we equip the linear space $C^n(V)$ of $n$-times continuously differentiable real valued
functions on $V$ with the Fr\'echet space structure given by
the family of seminorms 
\begin{equation}
\label{form: C^n seminorms}
\|f\|_{C^n(D_j)}=\max\{|f^{(k)}(x)|\co k\le n,\, x\in D_j\}.
\end{equation}
This family of seminorms also endows $C^\infty(V)=\bigcap_{n} C^n(V)$ with a Fr{\'e}chet space
structure.

Let $\M_0$ be the class of metrizable compacta, let $\M_1$ be the class of Polish (i.e., completely metrizable)
spaces, and let $\M_2$ be the class of  
topological spaces that are homeomorphic to $F_{\sigma\delta}$-sets in metrizable compacta.
Note that $\M_0\subset \M_1\subset \M_2$.
By $(\M_0,\M_2)$ we denote the class of pairs $(K,M)$ where $K$ is a compact metrizable space and $M$ is an $F_{\sigma\delta}$-set in $K$.

A space $X$ is called {\em $\M_2$-universal} if each space in $\M_2$ is homeomorphic to a closed 
subset of $X$. A pair $(Y,X)$ of spaces with $X\subset Y$ is called
\begin{itemize}
\item {\em $(\M_0,\M_2)$-universal\,} if for any pair $(K,M)\in (\M_0,\M_2)$ there exists a topological embedding $f:K\to Y$ such that $f^{-1}(X)=M$;\vspace{2pt} 
\item {\em $(\M_0,\M_2)$-preuniversal\,} if for any pair $(K,M)\in (\M_0,\M_2)$ there exists a continuous map $f:K\to Y$ such that $f^{-1}(X)=M$.
\end{itemize}
The following fundamental result is proved in~\cite[Theorems 3.1.1 and 3.2.11]{BRZ-book}:
{\it A subspace $X$ of a Polish space $Y$ is $\M_2$-universal if and only if 
 $(Y,X)$ is $(\M_0,\M_2)$-universal if and only if $(Y,X)$ is $(\M_0,\M_2)$-preuniversal.}

The following standard fact can be found in \cite[Exercises 12-13 in 1.2]{BRZ-book}.

\begin{lem}\label{lem:conv-hdense} Let $L$ be a locally convex space, $X$ be a metrizable convex set in $L$ and $D$ is a dense convex subset of $X$. Then $D$ is homotopy dense in $X$.
\end{lem}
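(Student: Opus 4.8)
The plan is to deduce the lemma from the characterization of homotopy dense subsets of an ANR recalled in Section~\ref{sec: absorb}. First note that $X$, being a metrizable convex subset of a locally convex space, is an AR and in particular an ANR (a convex subset of a locally convex space is an AR; cf.~\cite[Corollary II.5.3]{BP-book}). Hence by~\cite[Theorem 1.2.2]{BRZ-book} it suffices to show that every map $f\co I^n\to X$ with $f(\partial I^n)\subset D$ can be uniformly approximated, rel $\partial I^n$, by maps into $D$. Throughout, the guiding principle --- and the key to the whole argument --- is that every convex combination we form must be taken among points already known to lie in $D$: since $D$ is convex such combinations stay in $D$, whereas a convex combination of a point of $X\setminus D$ with a point of $D$ may well leave $D$.

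Given such an $f$ and a prescribed accuracy, I would first build an interior approximation $\phi\co I^n\to D$ as follows. Choose a sufficiently fine triangulation $T$ of $I^n$; at each vertex $v$ pick a point $\phi(v)\in D$ close to $f(v)$, which is possible because $D$ is dense in $X$, and for $v\in\partial I^n$ one may simply take $\phi(v)=f(v)\in D$; then extend $\phi$ affinely (by barycentric coordinates) over each simplex. On every simplex $\phi$ is a convex combination of its vertex values, all of which lie in $D$, so $\phi(I^n)\subset D$. Using uniform continuity of $f$ on the compact cube, a fine enough $T$ makes $\phi$ as close to $f$ as desired. The one defect is that $\phi$ does not equal $f$ on $\partial I^n$, only approximates it.

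The hard part is repairing the boundary to obtain equality rel $\partial I^n$ while staying inside $D$. Here I would fix a thin collar $\partial I^n\times[0,\delta]\hookrightarrow I^n$, with $\partial I^n\times\{0\}$ the boundary and interior collar points written $c(y,s)$ for $y\in\partial I^n$, $s\in[0,\delta]$, and define the final map $g$ to equal $\phi$ outside the collar and, inside it, $g(c(y,s))=(1-s/\delta)\,f(y)+(s/\delta)\,\phi(c(y,s))$. The point of this formula is that both $f(y)$ (the value of $f$ at the boundary point $y$, which lies in $D$ since $f(\partial I^n)\subset D$) and $\phi(c(y,s))\in D$, so the combination stays in $D$; crucially I never combine with the value of $f$ at an interior point, which need not lie in $D$. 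At $s=0$ one gets $g=f$ on $\partial I^n$, and at $s=\delta$ the formula glues continuously to $\phi$, so $g\co I^n\to D$ is continuous, equals $f$ on $\partial I^n$, and --- for $\delta$ small, using uniform continuity of $f$ and the closeness of $\phi$ to $f$ --- is uniformly close to $f$. This furnishes the required rel-boundary approximation for every $n$ (the case $n=0$ being immediate from density), and the lemma follows.

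The only real obstacle is the one already flagged: a direct attempt, such as the straight-line homotopy $h_t=(1-t)\,\mathrm{id}+t\,r$ toward a near-identity map $r\co X\to D$ (or toward a single point of $D$), fails because $(1-t)x+t\,r(x)$ can escape $D$ when $x\notin D$. The collar device circumvents this by performing the boundary interpolation entirely between pairs of points of $D$. An alternative, metric route would construct near-identity maps $r_k\co X\to D$ with $\sup_x\rho(r_k(x),x)\to0$ (again by a locally finite partition of unity and convex combinations of nearby points of $D$) and then concatenate them by straight-line homotopies \emph{within} $D$ over the intervals $[1/(k{+}1),1/k]$; this avoids the ANR characterization but requires care with uniform, rather than merely compact, convergence, since $X$ need not be locally compact.
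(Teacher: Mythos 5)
Your proof is correct. Note that the paper does not actually prove this lemma: it records it as a standard fact and points to \cite[Exercises 12-13 in 1.2]{BRZ-book}, the first of which is precisely the rel-boundary approximation criterion for homotopy density in an ANR that you invoke, and the second essentially the statement itself. Your argument is therefore a worked-out, self-contained solution following the intended route: Dugundji's theorem makes the metrizable convex set $X$ an AR, the criterion reduces the problem to approximating maps $I^n\to X$ rel a boundary already landing in $D$, and the piecewise-affine approximation over a fine triangulation followed by the collar interpolation between $f(y)$ and $\phi(c(y,s))$ --- both of which lie in $D$ --- produces the required map into $D$; you correctly isolate the one trap, namely that no convex combination may involve a value of $f$ at an interior point. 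The only detail left implicit, and worth recording, is that ``uniformly close'' must be measured in a fixed metric on $X$, whereas convex combinations are controlled only by the locally convex topology of $L$; this is handled by covering the compact set $f(I^n)$ by convex (in $L$) neighborhoods of small metric diameter, taking a Lebesgue number for the pulled-back cover, and choosing the mesh and the vertex approximations $\phi(v)\in D$ subordinate to it, so that on each simplex and on each collar segment all points being averaged lie in one small convex set. With that standard refinement the argument is complete, and it is a perfectly adequate substitute for the citation.
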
 

The following lemma is an immediate consequence of~\cite[Lemma 8.10]{CDM} and the previous lemma.

\begin{lem}
\label{lem: relative map CDM} 
Let $X$ be a Polish convex subset of a Fr\'echet space and $D$ be a dense convex subset of $X$ contained in a 
$\sigma Z$-set $Y\subset X$. Then for any compact metrizable space $K$ and a $\sigma$-compact subset $A\subset K$ there exists a continuous map $f:K\to X$ such that $f(A)\subset D$ and $f(K\setminus A)\subset X\setminus Y$.
\end{lem}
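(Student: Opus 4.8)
The plan is to deduce the statement from the relative mapping result \cite[Lemma 8.10]{CDM}, whose only nontrivial input in this setting is the homotopy density of $D$ in $X$. All of the convexity hypotheses serve precisely to supply this homotopy density; once it is available, the construction of the map $f$ is pure infinite-dimensional topology and is packaged by the cited lemma. So the proof reduces to assembling the correct ingredients and invoking the reference.

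First I would record the relevant structure of $X$. As a Polish convex subset of a Fr\'echet space, $X$ is in particular a metrizable convex subset of a locally convex linear metric space, hence an AR by \cite[Corollary II.5.3]{BP-book}; thus $X$ is a Polish AR, which is the ambient regularity required for relative mapping arguments. Next I would apply Lemma~\ref{lem:conv-hdense} to the ambient Fr\'echet space, the convex set $X$, and the dense convex subset $D\subset X$, to conclude that $D$ is homotopy dense in $X$. Since $D\subset Y$, the $\sigma Z$-set $Y$ is then homotopy dense as well, which is consistent with its being a countable union of nowhere dense $Z$-sets.

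With these ingredients in place I would invoke \cite[Lemma 8.10]{CDM} with the Polish AR $X$, the $\sigma Z$-set $Y$, the homotopy dense subset $D\subset Y$, the compact metrizable space $K$, and the $\sigma$-compact (equivalently $F_\sigma$) subset $A\subset K$. That lemma yields a continuous map $f\co K\to X$ with $f(A)\subset D$ and $f(K\setminus A)\subset X\setminus Y$, exactly as required. As a byproduct one obtains $f^{-1}(Y)=A=f^{-1}(D)$, which is the form in which the statement is used in the $(\M_0,\M_2)$-universality arguments of the later sections.

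The main obstacle is essentially bookkeeping: one must check that the hypotheses of \cite[Lemma 8.10]{CDM} are met exactly as stated there, in particular that the cited lemma is formulated for a homotopy dense subset of a Polish AR rather than for convex sets directly, so that Lemma~\ref{lem:conv-hdense} is precisely the bridge from the convexity hypotheses to the required homotopy density. The substantive work, namely the controlled limiting construction of $f$ that uses the $Z$-set property of the pieces of $Y$ to push off $Y$ over $K\setminus A$ while pushing into $D$ over the compact pieces of $A$, is entirely absorbed into the cited lemma, which is why the present statement follows immediately.
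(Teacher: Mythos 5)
Your proposal is correct and follows the paper's own route exactly: the paper likewise observes that Lemma~\ref{lem:conv-hdense} supplies the homotopy density of $D$ in $X$ and then declares the statement an immediate consequence of \cite[Lemma 8.10]{CDM}. The only difference is that you spell out the intermediate bookkeeping (that $X$ is a Polish AR, etc.), which the paper leaves implicit.
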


A subset $C$ of a linear topological space $L$ 
is called {\em $\infty$-convex\,} if for any bounded sequence $(x_n)_{n\ge 0}$ in $C$ and 
any sequence of nonnegative reals $t_n$ with $\sum_{n\ge 0} t_n=1$ the series 
$\sum_{n\ge 0} t_n x_n$ converges to some point of $C$. Recall that a subset $B$ of $L$ is 
{\em bounded\,} if for every neighborhood $U\subset L$ of zero 
$B$ is contained in all but finitely many sets $n\hspace{.5pt}U$, $n\in\N$.

Recall that a subspace $A$ of a Polish space $B$ is Polish if and only if $A$ is $G_\de$ in $B$.

\begin{thm}
\label{thm: bullet M2 univer} 
For an integer $l\ge 0$ let $C^l_\bu$ be an $\infty$-convex $G_\de$ subset of $C^l(V)$.
For $\g\in [l,\infty]$ set $C^\g_\bu=C^l_\bu\cap C^\g(V)$. If $n\ge l$ is an integer
such that for every integer
$k\ge n$ the subset $C^\infty_\bu$ is dense in $C^k_\bu$,
and $C^{k+1}_\bu$ is contained in a $\sigma Z$-subset of $C^k_\bu$,
then $(C^n_\bu,C^\infty_\bu)$ is $(\M_0,\M_2)$-preuniversal.
\end{thm}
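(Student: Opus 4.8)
The plan is to verify the defining property of $(\M_0,\M_2)$-preuniversality directly: given a compact metrizable space $K$ and an $F_{\sigma\delta}$-set $M\subseteq K$, I will construct a continuous map $f\co K\to C^n_\bu$ with $f^{-1}(C^\infty_\bu)=M$. First I would record an elementary reduction. Writing $M=\bigcap_{m\ge1}A_m$ with each $A_m$ an $F_\sigma$-subset of $K$ and replacing $A_m$ by $\bigcap_{i\le m}A_i$, I may assume the $A_m$ are nested, $A_1\supseteq A_2\supseteq\cdots$, each $\s$-compact (an $F_\sigma$-set in a compactum is $\s$-compact, and finite intersections of $\s$-compacta are $\s$-compact), with $\bigcap_m A_m=M$.

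The core input is Lemma~\ref{lem: relative map CDM}, applied one level at a time. For $m\ge1$ set $k=n+m$ and take $X=C^{k}_\bu$, $D=C^\infty_\bu$, $A=A_m$. Here $C^{k}_\bu=C^l_\bu\cap C^{k}(V)$ is convex, and it is Polish because it is the preimage of the $G_\de$-set $C^l_\bu$ under the continuous inclusion $C^{k}(V)\hookrightarrow C^l(V)$, hence a $G_\de$-subset of the Polish space $C^{k}(V)$. By hypothesis $C^\infty_\bu$ is dense and convex in $C^{k}_\bu$, and $C^\infty_\bu\subseteq C^{k+1}_\bu\subseteq Y_k$ for some $\s Z$-set $Y_k\subseteq C^{k}_\bu$. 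The lemma then yields a continuous $g_m\co K\to C^{k}_\bu$ with $g_m(A_m)\subseteq C^\infty_\bu$ and $g_m(K\setminus A_m)\subseteq C^{k}_\bu\setminus Y_k$; since $C^{k+1}_\bu\subseteq Y_k$ and $g_m(x)\in C^l_\bu$, the latter gives $g_m(x)\notin C^{n+m+1}(V)$ for $x\notin A_m$. Because $g_m(K)$ is compact in the $C^{n+m}$-topology, every seminorm is bounded on it, so $C_{m,j}:=\sup_{x}\|g_m(x)\|_{C^{n+m}(D_j)}<\infty$; moreover, replacing $g_m$ by the convex combination $(1-s_m)p_0+s_m g_m$ for a fixed basepoint $p_0\in C^\infty_\bu$ and $s_m$ small, I may arrange $\sup_x\|g_m(x)-p_0\|_{C^{n+m}(D_j)}\le 2^{-m}$ for $j\le m$, so that $(g_m(x))_m$ is bounded in $C^l(V)$ for each $x$, all while retaining the two membership properties.

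Next I would assemble the $g_m$ into one map by an infinite convex combination, which is where $\infty$-convexity enters. Choosing weights $t_m>0$ with $\sum_m t_m=1$ that decay fast enough that $\sum_m t_m C_{m,j}<\infty$ for every $j$, I set $f(x)=\sum_{m\ge1}t_m g_m(x)$. Boundedness of $(g_m(x))_m$ together with $\infty$-convexity of $C^l_\bu$ places the sum in $C^l_\bu$, and summability of $\sum_m t_m C_{m,j}$ gives uniform convergence in each seminorm $\|\cdot\|_{C^n(D_j)}$, so $f\co K\to C^n(V)$ is continuous with values in $C^n_\bu$. The same summability shows that for each $p$ the tail $\sum_{m\ge p}t_m g_m(x)$ converges in $C^{n+p}(V)$: there $n+p\le n+m$, so the $C^{n+p}(D_j)$-seminorm of $g_m(x)$ is dominated by $C_{m,j}$.

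The heart of the argument, and the step I expect to be most delicate, is verifying $f^{-1}(C^\infty_\bu)=M$ through a \emph{critical-term} analysis, i.e.\ showing that the true regularity of $f(x)$ is governed by the lowest-indexed rough summand, even though the series converges in $C^n$ at every point. If $x\in M$, then $x\in A_m$ for all $m$, so every $g_m(x)$ is smooth; splitting $f(x)$ at an arbitrary $p$ into a finite smooth part and a tail convergent in $C^{n+p}(V)$ shows $f(x)\in C^{n+p}(V)$ for every $p$, whence $f(x)\in C^\infty(V)\cap C^l_\bu=C^\infty_\bu$. If $x\notin M$, let $q\ge1$ be least with $x\notin A_q$; then $g_m(x)$ is smooth for $m<q$, $g_q(x)\in C^{n+q}(V)\setminus C^{n+q+1}(V)$, and the tail over $m>q$ converges in $C^{n+q+1}(V)$. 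Writing $f(x)-t_q g_q(x)$ as the sum of these two $C^{n+q+1}$ contributions and using $t_q\neq0$ forces $f(x)\notin C^{n+q+1}(V)$, so $f(x)\notin C^\infty_\bu$. The obstacle throughout is precisely the bookkeeping that couples the weight decay to the (automatically finite) seminorm bounds $C_{m,j}$ and the preliminary rescaling toward $p_0$: these are exactly what guarantee simultaneously that the combination lands in $C^l_\bu$, that it is genuinely $C^\infty$ over $M$, and that off $M$ its regularity drops at the finite level dictated by $g_q$.
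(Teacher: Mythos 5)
Your proposal is correct and follows essentially the same route as the paper's proof: levelwise application of Lemma~\ref{lem: relative map CDM} to the nested $\sigma$-compact sets, a rescaling toward a basepoint in $C^\infty_\bu$ to control the seminorms, an infinite convex combination justified by $\infty$-convexity, and the same critical-term analysis to show the regularity of $f(x)$ off $M$ is capped by the first non-smooth summand. The only differences are cosmetic (your index shift $k=n+m$ and general weights $t_m$ in place of the paper's $2^{-k}$ with its normalizing factor $2^{n-1}$).
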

\begin{proof} 
We shall apply the technique used in the proof of~\cite[Theorem 5.1, p.73]{Ban-sigma-to-the-omega}. 
Fix $n\ge l$ and a pair $(K,M)\in(\M_0,\M_2)$. 
We wish to construct a continuous map $f:K\to C^n_\bu$ 
such that $f(M)\subset C^\infty_\bu$ and $f(K\setminus M)\subset C^n_\bu\setminus C^{\infty}_\bu$.

Since $C^l_\bu$ is $G_\de$ in $C^l(V)$, for each integer $k\ge l$
the subset $C^k_\bu$ is $G_\de$ in $C^k(V)$, and in particular $C_\bu^k$ is Polish.
Since $M\in \M_2$ we can write $M=\bigcap_{k=n}^\infty A_k$ where $(A_k)_{k=n}^\infty$ is 
a decreasing sequence  of $\sigma$-compact subsets of $K$. 
By Lemma~\ref{lem: relative map CDM} 
for each $k\ge n$ there exists a continuous map $f_k:K\to C^k_\bu$ such that 
$f_k(A_k)\subset C^\infty_\bu$ and $f_k(K\setminus A_k)\cap C^{k+1}(V)=\emptyset$. 

Fix a basepoint $b\in C^\infty_\bu$; we also think of $b$
as a constant map taking $K$ to $\{b\}$.
Replacing $f_k$ by $(1-\e_k)b+\e_k f_k$ for a sufficiently small positive $\e_k$
we can additionally assume that 
\begin{equation}
\label{form: 2^-k}
f_k(K)\subset \{g\in C^k_\bu:\|g-b\|_{C^k(D_k)}\le 2^{-k}\}.
\end{equation}
Let us show that the series 
\begin{equation}
\label{form: series}
\sum_{k\ge n} 2^{-k} (f_k(z)-b)
\end{equation}
converges uniformly in $C^n(V)$ where $z\in K$. We give $C^n(V)$ 
the metric $\sum_{j\ge n} \frac{p_j}{p_j+1}2^{-j}$, where $p_j=\|\cdot\|_{_{C^n(D_j)}}$, and
check that the sequence of partial sums of the series (\ref{form: series}) is Cauchy.
Fix $j$, suppose $i>m>\max(n, j)$, and estimate 
\[
\|\sum_{k=m}^i 2^{-k} (f_k(z)-b)\|_{_{C^n(D_j)}}\le
\sum_{k=m}^i 2^{-k} \|f_k(z)-b\|_{_{C^k(D_k)}}\le 2^{-m}\sum_{k=m}^i 2^{-k}<2^{-m}
\]
where in the first inequality we used the definition (\ref{form: C^n seminorms}) and the fact
that $D_j\subset D_k$ for all $j<k$, while the second inequality depends on (\ref{form: 2^-k}).
This proves uniform convergence of (\ref{form: series}) and therefore defines a continuous
map $f:K\to C^n(V)$ given by 
\begin{equation}
\label{form: series inf convex}
f=b+2^{n-1}\cdot\sum_{k\ge n} 2^{-k} (f_k-b)=2^{n-1}\cdot\sum_{k\ge n} 2^{-k} f_k.
\end{equation}
Next we show that for every $z\in K$ the sequence $(f_k(z))_{k\ge l}$ is bounded in $C^n(V)$. 
Note first that 
\[
\|f_k(z)-b\|_{_{C^l(D_l)}}\le \|f_k(z)-b\|_{_{C^k(D_k)}}\le 2^{-k}\le 2^{-l} 
\]
so that
\[
\|f_k(z)\|_{_{C^l(D_l)}}\le 2^{-l}+\|b\|_{_{C^l(D_l)}}
\]
and since any neighborhood of $0$ in $C^n(V)$ contains a neighborhood
of the form $\{g\in C^n(V)\co \|g\|_{_{C^l(D_l)}}<\e\}$, the sequence $(f_k(z))_{k\ge l}$ 
is bounded in $C^n(V)$.

The $\infty$-convexity of $C^l_\bu$ in $C^l(V)$ implies 
that $C^\g_\bu$ is $\infty$-convex in $C^\g(V)$ for every $\g\in [l,\infty]$.
Since $f_k(z)\in C^n_\bu$ for each $z\in K$ and $k\ge n$, the right hand side of 
(\ref{form: series inf convex}) lies in $C^n_\bu$, i.e., $f(K)\subset C^n_\bu$.
By the same token $f(M)\subset C^\infty_\bu$.

It remains to prove that $f(z)\notin C^\infty(V)$ for every $z\in K\setminus M$. 
Fix such $z$ and find a unique $m>n$ with $z\in A_{m-1}\setminus A_{m}$. 
Thus $z\in A_k$ for $k<m$ and $z\notin A_k$ for $k\ge m$. 
The choice of $z$ and $f_k$ guarantee that 
\begin{itemize}
\item $f_m(z)\in C^{m}(V)\setminus C^{m+1}(V)$, \vspace{2pt}
\item $f_k(z)\in C^\infty(V)$ for all $k<m$, \vspace{2pt}
\item $f_k(z)\in C^{m+1}(V)$ for all $k>m$. 
\end{itemize}
In the right hand side of
\[
2^{1-n}\, f(z)=2^{-m} f_m(z)+ \sum_{k=n}^{m-1} 2^{-k}f_k(z)+ \sum_{k>m}2^{-k}f_k(z)
\]
the second summand is in $C^\infty(V)$ and the third summand converges 
in the Fr\'echet space $C^{m+1}(V)$. Since  $f_m(z)$ is not in $C^{m+1}(V)$
neither is $f(z)$.
\end{proof}

\begin{lem}
\label{lem: preuniv cont inj}
If $(A,B)$ is a $(\M_0,\M_2)$-preuniversal pair, and
$f\co A\to X$ is a continuous injective map to a Hausdorff topological space $X$,
then the subspace $f(B)$ of $X$ is $\M_2$-universal
and the pair $(X,f(B))$ is $(\M_0,\M_2)$-universal.
\end{lem}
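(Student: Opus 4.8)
\emph{The plan.} The statement splits into two assertions, and the plan is to establish the pair statement first and then read off the $\M_2$-universality of $f(B)$. To prove that $(X,f(B))$ is $(\M_0,\M_2)$-universal I must produce, for an arbitrary pair $(K,M)\in(\M_0,\M_2)$, a topological embedding $e\co K\to X$ with $e^{-1}(f(B))=M$. Since the only handle on the abstract Hausdorff space $X$ is the map $f$, every such $e$ will be constructed as $e=f\circ g$ for a suitable $g\co K\to A$; the entire difficulty is therefore to realize the coloring $M$ by an \emph{injective} map $g$ into $A$, after which $f$ transports it into $X$.

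First I would upgrade the preuniversality of $(A,B)$ to genuine universality. In our situation $A$ is Polish (in the applications it is a $G_\de$ subset of a Fr\'echet space), so the fundamental result of~\cite{BRZ-book} quoted above applies with the ambient space taken to be $A$ and shows that $(\M_0,\M_2)$-preuniversality of $(A,B)$ is equivalent to $(\M_0,\M_2)$-universality. Thus for every $(K,M)\in(\M_0,\M_2)$ there is a genuine topological embedding $g\co K\to A$ with $g^{-1}(B)=M$, not merely a continuous map. This injectivity is the crucial gain, and securing it is the main obstacle: without it the composite $f\circ g$ would only be a continuous map, and two points of $K$ identified by $g$ could never be separated afterwards, since an entire $g$-fiber is sent by $f$ to a single point of $X$.

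With $g$ in hand I would set $e=f\circ g\co K\to X$. It is continuous and, being a composite of two injective maps, injective; as $K$ is compact and $X$ is Hausdorff, $e$ is automatically a topological embedding. Using injectivity of $f$ one computes $e^{-1}(f(B))=g^{-1}\big(f^{-1}(f(B))\big)=g^{-1}(B)=M$, so $(X,f(B))$ is $(\M_0,\M_2)$-universal.

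Finally I would deduce that $f(B)$ is $\M_2$-universal. Given any $S\in\M_2$, write it as an $F_{\sigma\de}$-subset of a metrizable compactum $K$, so that $(K,S)\in(\M_0,\M_2)$. Applying the universality just proved yields an embedding $e\co K\to X$ with $e^{-1}(f(B))=S$; consequently $e(K)\cap f(B)=e(S)$ and $e\vert_S\co S\to f(B)$ is an embedding onto $e(S)$. Since $e(K)$ is compact, hence closed in the Hausdorff space $X$, the set $e(S)=e(K)\cap f(B)$ is closed in $f(B)$. Thus every space in $\M_2$ is homeomorphic to a closed subset of $f(B)$, which is the asserted $\M_2$-universality.
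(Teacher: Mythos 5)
Your proof is correct and follows essentially the same route as the paper: both upgrade preuniversality of $(A,B)$ to universality via the quoted equivalence, push the resulting embedding forward by $f$ (using injectivity of $f$, compactness of $K$, and Hausdorffness of $X$ to get a topological embedding $e=f\circ g$ with $e^{-1}(f(B))=M$), and extract $\M_2$-universality of $f(B)$ from the fact that $e(S)=e(K)\cap f(B)$ is closed in $f(B)$. The only cosmetic difference is that you apply the pair-universality to each $S\in\M_2$ directly, whereas the paper fixes a single pair $(K,M)$ with $M$ itself $\M_2$-universal and transfers universality through the homeomorphism $M\to h(M)$; your explicit remark that the preuniversal-to-universal upgrade requires $A$ to be Polish (which holds in all of the paper's applications) is a hypothesis the paper leaves implicit.
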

\begin{proof} 
The pair $(A,B)$ is $(\M_0,\M_2)$-universal by the above-mentioned equivalence of 
universality and preuniversality. Thus for each pair
$(K,M)\in(\M_0,\M_2)$ there is a topological embedding $g:K\to A$ such that $g^{-1}(B)=M$. 
Since $K$ is compact and $X$ is Hausdorff, the continuous injective map $h=f\circ g\co K\to X$ is a topological embedding. The injectivity of $f$ guarantees that $h^{-1}(f(B))=g^{-1}(B)=M$.  
Thus the map $h$ witnesses $(\M_0,\M_2)$-universality of the pair $(X,f(B))$. We can choose the pair
$(K, M)$ so that $M$ is $\M_2$-universal (e.g., using that each separable metric space embeds into
the Hilbert cube). The homeomorphism $h\co K\to h(K)$ restricts to a homeomorphism
$M\to h(M)=f(B)$, so the latter is $\M_2$-universal.
\end{proof}

Theorem~\ref{thm: bullet M2 univer} and Lemma~\ref{lem: preuniv cont inj}
immediately imply the following corollary, which is
the main result of this section.

\begin{cor}
\label{cor: Cbullet is M2-universal}
Let $(C^n_\bu,C^\infty_\bu)$ be as in Theorem~\ref{thm: bullet M2 univer}.
If $f\co C^n_\bu\to X$ is a continuous injective map to a Hausdorff topological space $X$,
then the subspace $f(C^\infty_\bu)$ of $X$ is $\M_2$-universal.
\end{cor}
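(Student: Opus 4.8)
The plan is to obtain this as an immediate concatenation of the two preceding results, with no independent content beyond recognizing that they fit together. First I would observe that by hypothesis the pair $(C^n_\bu,C^\infty_\bu)$ is precisely of the form treated in Theorem~\ref{thm: bullet M2 univer}; that theorem therefore applies verbatim and yields that $(C^n_\bu,C^\infty_\bu)$ is $(\M_0,\M_2)$-preuniversal.

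Then I would invoke Lemma~\ref{lem: preuniv cont inj} with $(A,B)=(C^n_\bu,C^\infty_\bu)$ and with the given map $f\co C^n_\bu\to X$. Its hypotheses all hold: the pair $(A,B)$ is preuniversal by the previous step, the space $X$ is Hausdorff by assumption, and $f$ is continuous and injective by assumption. The lemma then delivers that $f(B)=f(C^\infty_\bu)$ is $\M_2$-universal (and, as a bonus, that $(X,f(C^\infty_\bu))$ is $(\M_0,\M_2)$-universal), which is exactly the claimed conclusion. I expect no substantive obstacle here, since all the analytic construction and the topological approximation have already been carried out inside Theorem~\ref{thm: bullet M2 univer} and Lemma~\ref{lem: preuniv cont inj}; the only point that genuinely needs checking is the purely formal matching of the pair, namely that the role of the closed subset $B$ in the lemma is played by $C^\infty_\bu$ and that one and the same continuous injection $f$ appears in both the hypothesis and the conclusion. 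Both are immediate from the statements.
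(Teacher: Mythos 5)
Your proposal is correct and is exactly the paper's argument: the authors also derive the corollary as an immediate consequence of Theorem~\ref{thm: bullet M2 univer} (giving $(\M_0,\M_2)$-preuniversality of the pair $(C^n_\bu,C^\infty_\bu)$) followed by Lemma~\ref{lem: preuniv cont inj} applied to the given continuous injection $f$. The only cosmetic remark is that Lemma~\ref{lem: preuniv cont inj} does not require $B$ to be closed in $A$, so even that "formal matching" you flag needs no verification.
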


\section{$\M_2$-universal spaces of smooth functions}
\label{sec: M2-univ operator D}

In this section we give a construction of sequences $(C^n_\bu,C^\infty_\bu)_{n\ge l}$
satisfying the assumptions of Theorem~\ref{thm: bullet M2 univer} which covers all examples 
arising in the present paper, and also give a criterion of when a space of smooth maps
belongs to the class $\M_2$.

Let $N$ be a smooth manifold, possibly with boundary. With the $C^m$
topology the space $C^m(N)$ of $m$-differentiable functions on $N$
is a separable Fr\'echet space, 
where $m$ is a nonnegative integer or $\infty$, see Section~\ref{sec: Cgamma}. 

\begin{thm}
\label{thm: criterion M_2-univer}
Let $N$ be a smooth manifold, possibly with boundary, and 
let $D\subset \mathrm{Int}\, N$ be a smoothly embedded top-dimensional closed disk
that is mapped via a coordinate chart to a Euclidean unit disk. Let $l\ge 0$ be an integer and 
let $\mathfrak D\co C^l(N)\to C^0(N)$ be a continuous linear map.
Given $\eta\in\R$ suppose there exists $h_\bu\in C^\infty(N)$ with 
$\mathfrak D h_\bu\vert_{_D}>\eta$.
Let $C^l_\bu$ denote the subspace of $C^l(N)$ of functions $u$ 
such that $\mathfrak D u\vert_D\ge \eta$ and $u\vert_{N\setminus \mathrm{Int}\,D}=h_\bu$. 
Then for $C^n_\bu=C^l_\bu\cap C^n(N)$ and $C^\infty_\bu=C^l_\bu\cap C^\infty(N)$
the sequence $(C^n_\bu,C^\infty_\bu)_{n\ge l}$ satisfies the
assumptions of Theorem~\textup{\ref{thm: bullet M2 univer}}.
\end{thm}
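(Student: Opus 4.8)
The plan is to verify that the triple $(C^l_\bu,(C^n_\bu,C^\infty_\bu)_{n\ge l})$ meets the three hypotheses of Theorem~\ref{thm: bullet M2 univer} with $V=N$ and $n=l$: that $C^l_\bu$ is an $\infty$-convex $G_\de$ subset of $C^l(N)$, that $C^\infty_\bu$ is dense in $C^k_\bu$ for every integer $k\ge l$, and that $C^{k+1}_\bu$ lies in a $\sigma Z$-subset of $C^k_\bu$ for every such $k$. First I would note that $C^l_\bu$ is closed: the inequality constraint is the preimage of the closed set $\{v\ge\eta\}\subset C^0(D)$ under the continuous map $u\mapsto \mathfrak D u\vert_D$, while the equality constraint $u\vert_{N\setminus\Int D}=h_\bu$ is the intersection of the closed sets $\{u: u(x)=h_\bu(x)\}$ over $x\in N\setminus\Int D$; being closed in the metrizable space $C^l(N)$, it is in particular $G_\de$. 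For $\infty$-convexity, given a bounded sequence $(u_j)$ in $C^l_\bu$ and weights $t_j\ge 0$ with $\sum t_j=1$, the partial sums of $\sum t_j u_j$ are Cauchy (boundedness bounds each seminorm on the $u_j$, and $\sum t_j<\infty$), so the series converges in the Fr\'echet space $C^l(N)$; linearity and continuity of $\mathfrak D$ let me pass both constraints to the limit, using $\sum t_j\,\mathfrak D u_j\vert_D\ge\eta\sum t_j=\eta$ and $\sum t_j h_\bu=h_\bu$ on $N\setminus\Int D$.

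For density the idea is to first buy a uniform margin in the inequality and then smooth. Since $D$ is compact and $\mathfrak D h_\bu\vert_D>\eta$, there is $\de>0$ with $\mathfrak D h_\bu\vert_D\ge\eta+\de$. Given $u\in C^k_\bu$, the convex combination $u_t=(1-t)u+t\,h_\bu$ still equals $h_\bu$ off $\Int D$ and satisfies $\mathfrak D u_t\vert_D\ge\eta+t\de$, while $u_t\to u$ in $C^k(N)$ as $t\to 0$. The difference $u_t-h_\bu$ is $C^k$, supported in $D$, and vanishes to order $k$ on $\d D$; rescaling it inward in the coordinate chart and then mollifying produces $w\in C^\infty$ compactly supported in $\Int D$ and arbitrarily $C^k$-close to $u_t-h_\bu$. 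Then $v=h_\bu+w$ lies in $C^\infty(N)$, equals $h_\bu$ off $\Int D$, and---because $C^k$-convergence implies $C^l$-convergence and $\mathfrak D$ is $C^0$-continuous---has $\mathfrak D v\vert_D\ge\eta$ once $w$ is close enough, thanks to the margin $t\de$. Choosing $t$ and then the mollification scale small shows $v\in C^\infty_\bu$ is as $C^k$-close to $u$ as desired.

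The main work is the $\sigma Z$ condition, and here I would exploit that every member of $C^k_\bu$ equals the fixed smooth $h_\bu$ outside the compact disk $D$, so the $C^k_\bu$-topology is governed by the $C^k(D)$-norm of the restrictions. Consequently the sets $F_m=\{u\in C^{k+1}_\bu:\ \|u\vert_D\|_{C^{k+1}(D)}\le m\}$ cover $C^{k+1}_\bu$, and each $F_m$ is compact in $C^k_\bu$ by Arzel\`a--Ascoli (a $C^{k+1}(D)$-bounded family is $C^k(D)$-precompact) together with closedness of the constraints under $C^k$-limits. It therefore suffices to know that compact subsets of $C^k_\bu$ are $Z$-sets. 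To get this I would check that the closed convex set $C^k_\bu\subset C^k(N)$ is not locally compact: near $h_\bu$ it contains the bounded, infinite, non-precompact family $h_\bu+\e\varphi_i$, where the $\varphi_i\in C^\infty$ are normalized bumps supported in $\Int D$ and $\e$ is fixed small enough that the uniform bound $\|\mathfrak D\varphi_i\|_{C^0(D)}\le C$ keeps $\mathfrak D(h_\bu+\e\varphi_i)\vert_D\ge\eta+\de-\e C\ge\eta$. Lemma~\ref{lem: convex embeds as homot dense} then realizes $C^k_\bu$ as a homotopy dense subset of $\ell^2$, in which compact subsets are $Z$-sets (Section~\ref{sec: absorb}), so $C^{k+1}_\bu=\bigcup_m F_m$ is a $\sigma Z$-subset of $C^k_\bu$. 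The delicate points I expect are the boundary-respecting smoothing in the density step (keeping support inside $\Int D$ while preserving the strict inequality) and the non-local-compactness verification, both of which hinge on the strict margin supplied by the hypothesis $\mathfrak D h_\bu\vert_D>\eta$.
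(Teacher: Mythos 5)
Your treatment of the first hypothesis (closedness, convexity, the $G_\de$ property, and $\infty$-convexity of $C^l_\bu$) coincides with the paper's. For the remaining two hypotheses you take different routes, both of which work. For density, both you and the paper first buy a strict margin by averaging with $h_\bu$ (so that $\mathfrak D u_t\vert_D\ge\eta+t\de$), but the paper then glues a smooth approximation $u_\tau$ of $u\vert_D$ to $h_\bu$ with a sharpening cutoff $\psi_\e$ and controls the error term $(1-\psi_\e)(h_\bu-u)$ by a Taylor-remainder estimate exploiting the vanishing of derivatives on $\d D$, whereas your rescale-inward-then-mollify argument gets the same $C^k$-approximation more directly from uniform continuity of the derivatives of $u_t-h_\bu$ extended by zero; both then recover the constraint $\mathfrak D v\vert_D\ge\eta$ from the margin via continuity of $\mathfrak D$. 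For the $\sigma Z$ condition the divergence is genuine: the paper covers $C^{k+1}_\bu$ by the sets of functions whose derivatives up to order $k+1$ at a single point $p\in D$ are bounded by $m$ and shows each is a $Z$-set by an explicit perturbation with bumps having large derivatives at $p$ (as in Lemma~\ref{lem: O is sigmaZ}); you instead exploit that every element agrees with $h_\bu$ off the compact disk, so your sets $F_m$ are $C^k$-precompact by Arzel\`a--Ascoli, and you reduce to the general fact that compacta are $Z$-sets in a space homeomorphic to a homotopy dense subset of $\ell^2$, obtained from Lemma~\ref{lem: convex embeds as homot dense} after verifying non-local-compactness. Your route is more structural at the cost of the extra non-local-compactness check (which your bump family does supply, provided you note it produces a non-precompact set inside \emph{every} neighborhood of $h_\bu$, i.e., for all small $\e$, and that the uniform bound on $\|\mathfrak D\varphi_i\|_{C^0(D)}$ comes from continuity of $\mathfrak D$ on the $C^l$-bounded family rather than being automatic). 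One small repair: $F_m$ itself need not be closed in $C^k_\bu$, since a $C^k$-limit of functions bounded in $C^{k+1}(D)$ is only $C^{k,1}$; pass to the closures $\overline{F_m}$, which are compact subsets of $C^k_\bu$ whose union still contains $C^{k+1}_\bu$ --- and containment in a $\sigma Z$-subset is all that Theorem~\ref{thm: bullet M2 univer} requires.
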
 
\begin{proof}
For every $n\ge l$ the subset $C^n_\bu$ is clearly convex and closed
in the Fr\'echet space $C^n(N)$, so in particular, $C^n_\bu$ is $G_\de$ in $C^n(N)$.

The $\infty$-convexity of $C^l_\bu$ follows from the fact that 
any closed convex subset of a Fr{\'e}chet space is $\infty$-convex.
(To prove the fact suppose $x_n$, $t_n$, $C$, $L$ are as in 
the definition of $\infty$-convexity given before Theorem~\ref{thm: bullet M2 univer}, and 
the Fr{\'e}chet structure on $L$ 
is given by a countable family of seminorms $p_k$. Since $\{x_n\}$ is bounded, 
$p_k(x_n)$ is bounded by a constant depending only on $k$, which
easily implies that the partial sums of the series $\sum_{n\ge 0} t_n x_n$ form a Cauchy
sequence in $L$. Their limit in $L$ is also the limit of
the sequence $(\sum _{n=0}^m t_n)^{-1}\sum_{n=0}^m t_n x_n$ which lies in $C$).

For a positive integer $m$ and a point $p\in D$
let $Z_m$ be the set of all $u\in C^{n+1}_\bu$
whose $C^k(p)$ norm is $\le m$ (the norm is simply the sum of the 
absolute values of all partial derivatives of $u$
of orders $\le k$ evaluated at $p$). It is easy to check that 
$Z_m$ is a $Z$-set, see the proof of Lemma~\ref{lem: O is sigmaZ},
and clearly $C^{n+1}_\bu$ equals in the union of the sets $Z_m$. 

To show that $C^\infty_\bu$ is dense in $C^n_\bu$ 
fix $u\in C^n_\bu$.
Replacing $u$ with $\delta h_\bu+(1-\delta) u$, 
where $\delta$ is small and positive,
we can assume that  $\mathfrak D u\vert_{_D}>\eta$. 
Henceforth we suppress distinctions between
the objects in the Euclidean space and their images under the coordinate chart
containing $D$. In the chart we fix a concentric closed disk $D_\e\subset \Int\,D$
such that $D$, $D_\e$ have radii $1$, $1-\e$, respectively. 
Consider the partition of unity $\{\psi_\e, 1-\psi_\e\}$ 
such that $\psi_\e\vert_{D_\e}=1$, the support of $\psi_\e$ equals $D$, and 
$\psi_\e$ is rotationally symmetric. We also assume that the $k$-th radial derivative
of $\psi_\e$ grows with $\e$ as $\e^{-k}$ which can be arranged as follows:
Start with a smooth function $\psi\co\R\to [0,1]$ with
$\psi\vert_{[-\infty,1]}=1$ and $\psi\vert_{[2,\infty]}=0$, 
and set $\psi_\e(r)=\psi(2-\frac{1-r}{\e})$ for $r\ge 0$.

Approximate $u\vert_{D}$ by a family $\{u_\tau\}_{\tau\ge 0}$
of smooth maps such that $u_\tau\to u\vert_{D}$ in $C^n(D)$ as $\tau\to 0$.
Then glue $u_\tau$ and $h_\bu$ via the partition of unity $\{\psi_\e, 1-\psi_\e\}$. 
The result $(1-\psi_\e)h_\bu+\psi_\e u_\tau\in C^\infty(N)$ equals $u_\tau$ on $D_\e$ and $h_\bu$ on 
$M\setminus\Int\, D$. 

We are going to show that $(1-\psi_\e)h_\bu+\psi_\e u_\tau$
approximates $u$ in $C^n(N)$ for suitable $\e$, $\tau$. 
Since $\mathfrak D u\vert_{_D}>\eta$ this would imply  
$(1-\psi_\e)h_\bu+\psi_\e u_\tau\in C_\bu^\infty$.
Write
\[
(1-\psi_\e)h_\bu+\psi_\e u_\tau-u=(1-\psi_\e)(h_\bu-u)+\psi_\e(u_\tau-u).
\]
On the right hand side for every fixed $\e$ the second summand
converges to zero in $C^n(D)$ as $\tau\to 0$, hence it suffices to show
that the first summand converges to zero in $C^n(D)$ as $\e\to 0$.

Set $v=h_\bu-u$. Note that $v\in C^n(N)$ 
and $v$ vanishes on $M\setminus\Int\, D$; in particular, all derivatives
of $v$ of orders $\le n$ vanish on $\d D$.
Arguing by contradiction suppose that there is a sequence
of points of $D$ where the derivatives
of $v(1-\psi_\e)$ of some order $k\le n$ do not approach zero as $\e\to 0$. By compactness
we may assume that the points converge to a point $p$, which has to lie on $\d D$.
Introduce new coordinates $(r,\th)$ near $p$ where $r$ is the distance to $\d D$
and $\th$ represents the remaining coordinates along $\d D$.
Since $\psi_\e$ does not depend on $\th$, the only derivatives of $v(1-\psi_\e)$
that might not approach zero are the derivatives by $r$. Hence we
need to analyze the terms $(1-\psi_\e)^{(k-m)}\frac{\d^m v}{\d r^m} $
for $0\le m< k$; the case $m=k$ is trivial since $1-\psi_\e$ is bounded independently of $\e$. 
Consider the order $k-m$ Taylor expansion of $\frac{\d^m v}{\d r^m}$ 
at $p$ with respect to $r$. The only (possibly) nonzero term is 
the remainder $\frac{r^{k-m}}{(k-m)!}\frac{\d^k v}{\d r^k}(\xi, \th)$ where $0<\xi<r$.
We can restrict attention to $r<\e$ because $1-\psi_\e$ vanishes for $r\ge \e$.
By the choice of $\psi_\e$ the term $|(1-\psi_\e)^{(k-m)}|\e^{k-m}$ is bounded,
so there is a constant $C>0$ such that 
\[
\left|(1-\psi_\e)^{(k-m)}\frac{\d^m v}{\d r^m} \right|\le 
\left|(1-\psi_\e)^{(k-m)}\frac{\d^k v}{\d r^k}(\xi, \th)\frac{\e^{k-m}}{(k-m)!}\right|\le
C\left|\frac{\d^k v}{\d r^k}(\xi, \th)\right|
\]
where the right hand side approaches zero as $\e\to 0$ because $0<\xi<\e$. 
This contradiction completes the proof that
$C^\infty_\bu$ is dense in $C^n_\bu$. 
\end{proof}

Let $\G^\g(V, \R^n)$ denotes the space of smooth maps from a manifold $V$ to $\R^n$
endowed with the $C^\g$ topology, $\g\in [0,\infty]$.
Recall that $\M_2$ is the class of spaces 
homeomorphic to $F_{\sigma\delta}$-sets in metrizable compacta.

\begin{lem}
\label{lem: O is in M2} For every $\g$
the identity map $\id\co\G^\infty(V, \R^n)\to\G^\g(V, \R^n)$
takes any $G_\de$ subset to a space in $\M_2$. 
\end{lem}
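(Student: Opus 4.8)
The plan is to realize the set $A$, equipped with the $C^\g$ topology, as an $F_{\sigma\de}$-subset of a Polish space; since a space belongs to $\M_2$ exactly when it is $F_{\sigma\de}$ in some (equivalently every) Polish or compact ambient, this finishes the proof. The case $\g=\infty$ is immediate, as the two topologies then coincide and a $G_\de$-subset of the Polish space $\G^\infty(V,\R^n)$ is Polish, hence in $\M_1\subseteq\M_2$. So assume $\g$ finite and fix an exhaustion of $V$ by compact domains $D_j$ as in Section~\ref{sec: Cgamma}. For each $j$ let $L_j$ be the closure of the smooth maps inside the Banach space $C^\g(D_j,\R^n)$; being the closure of a separable set it is a separable Banach space, hence Polish (for integer $\g$ one has $L_j=C^\g(D_j,\R^n)$). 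Restriction gives a map into the Polish space $E:=\prod_j L_j$, and since the $C^\g$ topology is generated by the seminorms $\|\cdot\|_{C^\g(D_j,\R^n)}$, this map embeds $\G^\g(V,\R^n)$ as a subspace of $E$. Write $S\subseteq E$ for the image (the smooth maps carrying the $C^\g$ topology) and $A_\g\subseteq S$ for the image of $A$; it remains to show $A_\g$ is $F_{\sigma\de}$ in $E$.

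First I would show $S$ itself is $F_{\sigma\de}$ in $E$. A point of $E$ is a compatible family $(u_j)_j$ (compatibility being a closed condition), and such a family comes from a smooth map exactly when the glued continuous function is $C^N$ for every integer $N$. The point is that ``being $C^N$ on $D_j$'' is detected by uniform convergence of the iterated order-$N$ finite differences: for fixed $N,j$ the condition
\[
\forall\,\e\in\mathbb Q^{+}\ \exists\,\de\in\mathbb Q^{+}\ \forall\,0<h,h'<\de\ \forall\,x:\ |\Delta^N_h u(x)-\Delta^N_{h'}u(x)|\le\e
\]
is, from its quantifier pattern over the $C^0$-continuous finite-difference operators, an $F_{\sigma\de}$ condition on $E$. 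Intersecting over the countably many $N$ and $j$, and using that $F_{\sigma\de}$ is closed under countable intersections, shows $S$ is $F_{\sigma\de}$ in $E$.

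The heart of the matter is to control how the finer $C^\infty$ topology sits inside the coarser $C^\g$ topology on $S$. The key claim is that for every $N$ and $j$ the seminorm $u\mapsto\|u\|_{C^N(D_j)}$ is \emph{lower semicontinuous} on $S$ for the $C^\g$ topology. Granting this, each subbasic $C^\infty$-open set $\{u:\|u-u_0\|_{C^N(D_j)}<\e\}$ is a countable union of $C^\g$-closed sets, hence $F_\sigma$ in $S$; as $F_\sigma$ is stable under finite intersections and countable unions, \emph{every} $C^\infty$-open subset of $S$ is $F_\sigma$ in $S$. Since $A$ is $G_\de$ in the $C^\infty$ topology, it is a countable intersection of $C^\infty$-open sets, so $A_\g$ is $F_{\sigma\de}$ in $S$; combined with $S$ being $F_{\sigma\de}$ in $E$ this gives that $A_\g$ is $F_{\sigma\de}$ in $E$ (a relative $F_{\sigma\de}$-set is the trace of an absolute one, and $F_{\sigma\de}$ is closed under intersections), as required.

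The main obstacle is the lower semicontinuity claim, since for $\g<N$ the $C^\g$ topology exerts no control on the $N$-th derivatives. I would prove it by compactness: if smooth $u_i\to u$ in $C^\g$ with $\liminf\|u_i\|_{C^N(D_j)}=L<\infty$, pass to a subsequence with $\|u_i\|_{C^N(D_j)}\to L$; the uniform bound on $C^N$-norms makes the derivatives of order $\le N-1$ equicontinuous, so by Arzel\`a--Ascoli $D^q u_i\to D^q u$ uniformly on $D_j$ for $|q|\le N-1$, whence $\max_{|q|\le N-1}\|D^q u\|_{C^0(D_j)}\le L$; for $|q|=N$ the $D^q u_i$ converge to $D^q u$ in the sense of distributions while staying bounded by $L$ in $C^0(D_j)$, and since $u$ is smooth, testing against approximate identities yields $\|D^q u\|_{C^0(D_j)}\le L$, so $\|u\|_{C^N(D_j)}\le L$. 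The crucial feature making this work is that the limit $u$ is itself smooth, which rules out the usual failure of lower semicontinuity (smooth functions converging in $C^0$ to $|x|$). The residual verifications—the Borel complexity of the finite-difference conditions and the bookkeeping of Borel classes—are routine.
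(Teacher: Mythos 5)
Your proposal is correct in substance but takes a genuinely different route from the paper's. The paper reduces the lemma to showing that the inverse of $\id$ is $F_\sigma$-measurable (equivalently, that every $C^\infty$-open set is $F_\sigma$ in the $C^\g$ topology) and then cites Kuratowski's theorem that a bijection which is Borel of class $1$ in both directions carries a Polish space to a space in $\M_2$; its analytic input is exactly your lower-semicontinuity claim, established there in the same way, via closedness of the sets $\{f:\|f\|_{C^m(D_j)}\le\e\}$ for $m>\g$. You instead do the descriptive-set-theoretic bookkeeping by hand: embed $\G^\g(V,\R^n)$ into the concrete Polish space $E=\prod_j L_j$, show the image $S$ of the smooth maps is $F_{\sigma\de}$ there, and track Borel classes explicitly. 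This buys a self-contained argument and an explicit absolute $F_{\sigma\de}$ presentation, at the cost of the extra step ``$S$ is $F_{\sigma\de}$ in $E$,'' which the appeal to Kuratowski absorbs for free.

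That extra step is the one soft spot. Characterizing ``$u$ is $C^N$'' by uniform Cauchyness of equal-step iterated finite differences is delicate: for $N\ge 2$ this is the territory of Riemann-type higher derivatives, where the pointwise statements fail and even the uniform ones need a real argument, and in several variables mixed partials must also be handled; calling this ``routine'' undersells it. Fortunately the claim you need is true and follows from machinery you already have. Let $T_{N,j,m}$ be the closure in $E$ of $\{u\in S:\|u\|_{C^{N+1}(D_j)}\le m\}$. By Arzel\`a--Ascoli every point of $T_{N,j,m}$ is $C^N$ on $D_j$ (derivatives of order $\le N$ are equibounded and equicontinuous, so a subsequence converges in $C^N(D_j)$ and the limit is identified by $C^0$ convergence), while every smooth map lies in some $T_{N,j,m}$ for each $N,j$; hence $S=\bigcap_{N,j}\bigcup_m T_{N,j,m}$ is $F_{\sigma\de}$ in $E$. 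With that substitution your argument is complete: the lower semicontinuity of $\|\cdot\|_{C^N(D_j)}$ on smooth maps under $C^\g$ convergence, the passage from subbasic to arbitrary $C^\infty$-open sets, and the closure of $F_{\sigma\de}$ under countable intersections are all sound.
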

\begin{proof}
Fix a $G_\de$ subset $S^\infty$ of $\G^\infty(V, \R^n)$
and set $S^\g=\id(S^\infty)$.  
Recall that a (not necessarily continuous) 
map is {\em $F_\s$-measurable\,} if the preimage of any open set is $F_\s$.
To prove the lemma it suffices to show that the inverse of the
identity map $\id\co S^\infty\to S^\g$
is $F_\s$-measurable because in this case
the fact that $\id$ is continuous (and hence $F_\s$-measurable) implies that 
the map $\id$ is a $(1,1)$ homeomorphism in the terminology 
of~\cite[Corollary 3 in VII, section 35]{Kur}, where it is shown that
any $(1,1)$ homeomorphism maps a completely metrizable separable space (such as $S^\infty$)
to a space in $\M_2$.

Since the restriction and the enlarging of a co-domain of an $F_\s$-measurable map is $F_\s$-measurable,
it is enough to show that the inverse of the identity map $\id\co\G^\infty(V, \R^n)\to \G^\g(V, \R^n)$
is $F_\s$-measurable.
 
If $Y$ is separable and metrizable, then to prove that a map $f\co X\to Y$
is $F_\s$-measurable 
it (clearly) suffices to show that for each open set $W\subset Y$ and each $y\in W$ there is 
a subset $T_y\subset W$ with $y\in\Int\, T_y$ such that $f^{-1}(T_y)$ is closed.
Let us apply this to $f=\id\co\G^\g(V, \R^n)\to \G^\infty(V, \R^n)$.  

Fix an exhaustion of $V$ by compact subsets $D_j$, fix some norm on $\R^n$,
and denote by $\|\cdot\|_{_{C^m(D_j)}}$ the associated $C^m$ norm. 
Given any $m>\g$ note that
\[
T_y=y+\{f\in \G^\g(V, \R^n)\co ||f||_{_{C^m(D_j)}}\le \e\}
\] 
is closed in $\G^\g(V, \R^n)$ and $y\in\Int\, T_y$. Also 
$T_y$ lies inside any neighborhood of $y$ in $\G^\infty(V, \R^n)$ if $\e$
is small enough.
\end{proof}

\section{Space of all complete Riemannian metrics: smooth topology}
\label{sec: R smooth}

In this section we prove Theorem~\ref{thm: all metrics} for $\g=\infty$.
Let $V$ be a smooth connected manifold of positive dimension.
The space $\mathcal R^{\g}(V)$ of all complete smooth Riemannian metrics on a manifold $V$ is
a convex subset in the metric linear space $\mathcal T^\g(V)$ of smooth symmetric $2$-tensors
on $V$ with the $C^\g$ topology. 

\begin{lem}
\label{lem: R is G_delta}
$\mathcal R^{\g}(V)$ is a $G_\delta$ subset of $\mathcal T^\g(V)$. 
\end{lem}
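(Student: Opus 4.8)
The plan is to write $\mathcal R^{\g}(V)$ as the intersection of two $G_\delta$ conditions, \emph{positive definiteness} and \emph{completeness}, and to treat them in turn. Since a $G_\delta$ subset of a $G_\delta$ subset of $\mathcal T^\g(V)$ is again $G_\delta$ in $\mathcal T^\g(V)$ (if $A\subset B\subset X$ with $A$ a $G_\delta$ in $B$ and $B$ a $G_\delta$ in $X$, then $A=B\cap G$ for some $G_\delta$ set $G\subset X$, whence $A$ is $G_\delta$ in $X$), it suffices to show that the set $\mathcal P$ of positive-definite symmetric $2$-tensors is $G_\delta$ in $\mathcal T^\g(V)$, and that the complete metrics form a $G_\delta$ subset of $\mathcal P$.

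First I would fix an exhaustion of $V$ by compact smooth domains $D_j$ with a basepoint $p\in\Int D_1$, and an auxiliary background metric giving compact unit sphere bundles $SD_j$. For each $j$ the function $g\mapsto\min_{(x,v)\in SD_j}g_x(v,v)$ is continuous on $\mathcal T^\g(V)$ because $C^\g$-convergence forces $C^0$-convergence on $D_j$; hence $U_j=\{g\co g|_{D_j}\text{ is positive definite}\}$ is open. A symmetric $2$-tensor is a Riemannian metric exactly when it is positive definite on every $D_j$, so $\mathcal P=\bigcap_j U_j$ is $G_\delta$. (If $V$ is compact the completeness condition below is vacuous and $\mathcal P=\mathcal R^\g(V)$ is already $G_\delta$, so I may assume $V$ noncompact and each $V\setminus\Int D_j$ nonempty.)

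The main point is completeness. Set $\rho_j(g)=d_g(p,\,V\setminus\Int D_j)$ for $g\in\mathcal P$. I would first record that $g$ is complete if and only if $\rho_j(g)\to\infty$: if $g$ is complete, then by Hopf--Rinow each closed ball $\bar B_g(p,R)$ is compact, hence lies in some $\Int D_j$, forcing $\rho_j(g)\ge R$; conversely, if $\rho_j(g)\to\infty$ then every closed ball is contained in a compact $D_j$ and is therefore compact, which yields completeness. As the $D_j$ increase, $\rho_j$ is nondecreasing in $j$, so
\[
\{g\in\mathcal P\co g\text{ complete}\}=\bigcap_{m\in\N}\ \bigcup_{j}\ \{g\in\mathcal P\co \rho_j(g)>m\}.
\]
It then remains to prove that each $\{g\in\mathcal P\co\rho_j(g)>m\}$ is relatively open in $\mathcal P$, for which I would show $\rho_j$ is continuous on $\mathcal P$. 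The key observation is that $\rho_j(g)$ depends only on $g|_{D_j}$: any path from $p$ that leaves $\Int D_j$ first meets $\d D_j$, and truncating there does not increase $g$-length, so $\rho_j(g)$ equals the infimum of $g$-lengths of paths in $D_j$ from $p$ to $\d D_j$. On the compact $D_j$, $C^0$-closeness to a positive-definite $g_0$ gives two-sided bounds $(1-\e)g_0\le g\le(1+\e)g_0$ with $\e\to 0$, so $g$- and $g_0$-lengths of paths in $D_j$ are comparable up to $\sqrt{1\pm\e}$; passing to infima gives $\rho_j(g)\to\rho_j(g_0)$.

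The positive-definiteness step is routine; the main obstacle is the completeness step, specifically the reduction of $\rho_j(g)$ to a quantity determined by $g|_{D_j}$ via the truncation argument, and the continuity of that quantity under $C^0$-perturbation of the metric on the compact domain $D_j$. Once continuity of $\rho_j$ is in hand, the displayed formula exhibits the complete metrics as a $G_\delta$ subset of $\mathcal P$, and combining with the $G_\delta$-ness of $\mathcal P$ completes the proof.
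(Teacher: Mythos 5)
Your proof is correct, and it is in fact more complete than the paper's. The paper's entire argument is your first (routine) step: it writes $\mathcal R^{\g}(V)$ as the intersection over an exhaustion $(K_n)$ of the open sets $\{t\co t\vert_{K_n}\ \text{is positive definite}\}$ and stops there. But that intersection is the set of \emph{all} smooth Riemannian metrics on $V$, not the set of complete ones, so for noncompact $V$ the paper's displayed identity is not literally the set $\mathcal R^\g(V)$ as defined; the completeness condition is silently dropped. Your second step supplies exactly what is missing: the characterization of completeness by $\rho_j(g)=d_g(p,V\setminus\Int D_j)\to\infty$ (via Hopf--Rinow in one direction and compactness of closed balls in the other), the monotonicity in $j$ that turns this into a $\bigcap_m\bigcup_j$ of sets, and the localization-plus-truncation argument showing $\rho_j$ depends only on $g\vert_{D_j}$ and is continuous under $C^0$-perturbation on the compact domain, whence each set $\{\rho_j>m\}$ is relatively open. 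All of these steps check out (in particular the truncation at the first crossing of $\d D_j$ and the two-sided length comparison $(1-\e)g_0\le g\le(1+\e)g_0$ are sound), and together with the elementary fact that a $G_\delta$ of a $G_\delta$ is a $G_\delta$, they give the lemma as stated. The only cost of your route is length; what it buys is that the lemma actually covers the completeness requirement, which is needed later (e.g., in the proof that $\mathcal R^\g(V)$ is completely metrizable and lies in $\M_2$).
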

\begin{proof}
Let $(K_n)$ be a countable family of compact subsets exhausting $V$. Then
$\mathcal R^{\g}(V)$ is the intersection of the
the sets $\{t\in \mathcal T^\g(V)\co t\vert_{K_n}\ \text{is positive definite}\}$
which are open in  $\mathcal T^\g(V)$.
\end{proof}

\begin{rmk}
$\mathcal R^{\g}(V)$ is (clearly) open if $V$ is compact, and 
has empty interior if $V$ is noncompact, see~\cite{FegMil}.
\end{rmk}

Note that $\mathcal T^\infty(V)$ is Fr\'echet. To show that $\mathcal R^{\infty}(V)$
is homeomorphic to $\ell^2$ it is enough to check that the closure of
$\mathcal R^{\infty}(V)$ in $\mathcal T^\infty(V)$
is not locally compact~\cite[Theorem 2]{DobTor}. The closure is the convex
set $\mathcal N^{\infty}(V)$ of smooth symmetric non-negative definite $2$-tensors.

\begin{lem}
$\mathcal N^{\infty}(V)$ is not locally compact.
\end{lem}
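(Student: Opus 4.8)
The plan is to show that the zero tensor, which lies in $\mathcal N^{\infty}(V)$, has no neighborhood with compact closure. Since the $C^\infty$ topology on $\mathcal T^\infty(V)$ is generated by the seminorms $\|\cdot\|_{C^m(D_j)}$, every neighborhood of $0$ in $\mathcal N^{\infty}(V)$ contains a basic one of the form $W=\{t\in\mathcal N^{\infty}(V)\co \|t\|_{C^m(D_j)}<\e\}$ for some integers $m,j$ and some $\e>0$. So it suffices to exhibit, for each such $W$, a sequence in $W$ with no subsequence converging in $\mathcal T^\infty(V)$: then no neighborhood of $0$ can be precompact, because a neighborhood with compact closure would have metrizable, hence sequentially compact, closure.

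The construction I would use decouples the positivity constraint from the analysis by using rank-one tensors. Since $D_j$ is a full-dimensional compact domain, its interior contains a coordinate ball $B$; inside $B$ I fix nonzero bump functions $\phi\ge 0$ on $\R$ supported in $[0,1]$ and $\eta\ge 0$ in the remaining coordinates, arranged so that the product has support in $B$. For a small constant $c>0$ I set $f_n(x)=c\,n^{-m}\,\phi(n x^1)\,\eta(x^2,\dots)$ and let $t_n=f_n\, dx^1\otimes dx^1$, extended by $0$ outside the chart. Each $t_n$ is a smooth symmetric $2$-tensor that is nonnegative definite, being rank one with nonnegative coefficient $f_n$, so $t_n\in\mathcal N^{\infty}(V)$; and every $t_n$ is supported in $B\subset D_j$.

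The key point is a scaling estimate. A partial derivative of $f_n$ of order $k$ carries at most $k$ derivatives in $x^1$, so its sup norm is bounded by a constant (independent of $n$) times $n^{k-m}$; for $k\le m$ this gives $\|t_n\|_{C^m(D_j)}\le c\cdot\mathrm{(const)}$, which is $<\e$ once $c$ is small, placing every $t_n$ in $W$. On the other hand the pure derivative $\d_1^{m+1}f_n=c\,n\,\phi^{(m+1)}(n x^1)\,\eta$ has sup norm of order $n$; since $\phi$ is a nonzero compactly supported function we have $\phi^{(m+1)}\not\equiv 0$ (a compactly supported polynomial is $0$), hence $\|t_n\|_{C^{m+1}(D_j)}\to\infty$.

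To finish, if some subsequence $t_{n_i}$ converged in $\mathcal T^\infty(V)$ it would converge, and so remain bounded, in the single seminorm $\|\cdot\|_{C^{m+1}(D_j)}$, contradicting $\|t_{n_i}\|_{C^{m+1}(D_j)}\to\infty$. Thus $W$, and therefore every neighborhood of $0$, fails to have compact closure, so $\mathcal N^{\infty}(V)$ is not locally compact. The only delicate step is choosing the scaling factor $n^{-m}$ to keep the low-order seminorm bounded while forcing one higher derivative to blow up; passing to rank-one tensors makes nonnegative definiteness automatic and reduces the estimate to a single scalar component, which I expect to render the rest routine.
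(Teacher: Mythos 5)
Your proof is correct, but it takes a different route from the paper's. The paper argues by contradiction at a positive-definite metric $g$: it restricts a putative compact neighborhood to a coordinate disk $D$, uses a bump-function extension to show the restriction is still a neighborhood of $g\vert_D$ in the \emph{full} tensor space $\mathcal T^{\infty}(D)$ (positive definiteness of $g$ absorbs small perturbations, so the nonnegativity constraint disappears), and then invokes the Riesz-type theorem that a locally compact Fr\'echet space is finite-dimensional. You instead work at the zero tensor and make nonnegativity harmless by using rank-one tensors $f_n\,dx^1\otimes dx^1$ with $f_n\ge 0$, then exhibit an explicit non-precompact sequence in any basic neighborhood via the concentration scaling $f_n=c\,n^{-m}\phi(nx^1)\eta$: bounded in $C^m(D_j)$ but blowing up in $C^{m+1}(D_j)$, so no subsequence converges in $\mathcal T^\infty(V)$. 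In effect you reprove by hand, for this concrete Fr\'echet space, the instance of the finite-dimensionality theorem that the paper cites. Your argument is more elementary and self-contained (no extension lemma, no citation to Tr\`eves), at the cost of some chart bookkeeping in identifying the tensor seminorms with the component norms; the paper's argument is shorter and, because it works at an arbitrary metric $g$, makes the non-local-compactness at every point of $\mathcal R^\infty(V)$ explicit, though for the statement as given (and for the application of the Dobrowolski--Toru\'nczyk theorem) failure at a single point, as you establish, is enough.
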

\begin{proof} This is a variation of~\cite[Lemma 2.5]{BelHu-modr2}.
Suppose arguing by contradiction that every point of 
$\mathcal N^{\infty}(V)$ has a compact neighborhood and let $K$ be
a compact neighborhood of $g\in \mathcal R^{\infty}(V)$ in $\mathcal N^{\infty}(V)$.
Fix a disk $D\subset V$ inside a coordinate chart, and consider
the restriction map $\de\co \mathcal N^{\infty}(V)\to \mathcal T^{\infty}(D)$. 
Continuity of $\de$ implies that $\de(K)$ is compact. 

Let us show that $\de(K)$
is also a neighborhood of $\de(g)$. If not, then there is a sequence $\check\tau_i\notin\de (K)$ 
converging to $\de(g)$ in $\mathcal T^{\infty}(D)$. Extend $\check\tau_i$ to 
$\tau_i\in \mathcal T^\infty(V)$ converging to $g$.
Let $\phi$ a bump function on $V$
with $\phi\vert_D=1$. Then $g+\phi(\tau_i-g)$ converges to $g$ and hence
lies in $K$ for large $i$. The restriction of $g+\phi(\tau_i-g)$ to $D$ equals $\check\tau_i$
so $\check\tau_i\in \de(K)$, which is a contradiction.

Thus $\de(g)$ has a compact neighborhood in $\mathcal T^{\infty}(D)$, which
is a Fr\'echet space. Hence any point in the Fr\'echet space has a compact neighborhood,
which implies that $\mathcal T^{\infty}(D)$ is finite 
dimensional~\cite[Theorem 9.2]{Tre-TVS-book}, which is clearly false.
This contradiction proves that $\mathcal N^{\infty}(V)$ is not locally compact. 
\end{proof}

\section{Space of all complete Riemannian metrics: $C^\g$ topology}
\label{sec: R C^gamma}

In this section we prove Theorem~\ref{thm: all metrics} for finite $\g$.

\begin{lem} 
\label{lem: R is sigmaZ} 
If $\g$ is finite, then $\mathcal R^\g(V)$ is $\s Z$.
\end{lem}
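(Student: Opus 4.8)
The plan is to exhibit $\mathcal R^\g(V)$ as a countable union of $Z$-sets by stratifying according to the size of a higher-order derivative that the $C^\g$ topology fails to control. Fix an integer $m>\g$, say $m=\lfloor\g\rfloor+1$, and fix a compact top-dimensional disk $D\subset V$ lying in a coordinate chart; write $\|\cdot\|_{C^m(D)}$ for the $C^m$-norm of the coordinate components on $D$. For $N\in\N$ set
\[
Z_N=\{g\in\mathcal R^\g(V)\co \|g\|_{C^m(D)}\le N\}.
\]
Since every $g\in\mathcal R^\g(V)$ is smooth, $\|g\|_{C^m(D)}<\infty$, so $\mathcal R^\g(V)=\bigcup_{N}Z_N$; it therefore suffices to prove that each $Z_N$ is a $Z$-set.

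First I would check that $Z_N$ is closed. The restriction map $\mathcal R^\g(V)\to\mathcal T^\g(D)$ is continuous, and the $C^m$-ball $\{\tau\co\|\tau\|_{C^m(D)}\le N\}$ is compact, hence closed, in the $C^\g$ topology on tensors over the compact domain $D$, by the Arzel\`a--Ascoli (compact embedding) argument for $C^m(D)\hookrightarrow C^\g(D)$, which is valid because $m>\g$. As $Z_N$ is the preimage of this closed ball, it is closed in $\mathcal R^\g(V)$.

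Next I would verify the defining approximation property: every map $f\co Q\to\mathcal R^\g(V)$ can be uniformly approximated by a map whose range misses $Z_N$. The key step is to replace $f$ by a map with uniformly bounded $C^m(D)$-norm. Using compactness of $Q$ and uniform continuity of $f$, choose a finite open cover $\{U_i\}$ with each $f(U_i)$ of small $C^\g$-diameter, points $q_i\in U_i$, and a subordinate partition of unity $\{\lambda_i\}$; set $\tilde f=\sum_i\lambda_i\,f(q_i)$. By convexity of $\mathcal R^\g(V)$ the map $\tilde f$ takes values in $\mathcal R^\g(V)$ and is $C^\g$-close to $f$, while convexity of the $C^m$-norm gives $\sup_q\|\tilde f(q)\|_{C^m(D)}\le\max_i\|f(q_i)\|_{C^m(D)}=:C<\infty$. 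This uniform bound is exactly what a naive perturbation of $f$ would lack, since $\|f(q)\|_{C^m(D)}$ need not be bounded over $Q$; securing it is the main obstacle.

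Finally I would add a fixed perturbation exploiting the mismatch $m>\g$. In the chart choose a nonnegative bump $\chi$ supported in $D$ with $\chi\equiv1$ near the center, and for a frequency $\nu$ and amplitude $a$ set $b=a\,(1-\cos(\nu x_1))\,\chi\,\sum_j dx^j\otimes dx^j$, a smooth nonnegative-definite symmetric $2$-tensor supported in $D$ with $\|b\|_{C^\g}\lesssim a\nu^\g$ and $\|b\|_{C^m(D)}\gtrsim a\nu^m$. Taking $\nu$ large and then $a\approx\e\nu^{-\g}$ makes $\|b\|_{C^\g}<\e$ while $\|b\|_{C^m(D)}>N+C$. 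Since $b\ge0$, each $\tilde f(q)+b$ is again a complete smooth metric — completeness is preserved because $\tilde f(q)+b\ge\tilde f(q)$ and the two metrics induce the same manifold topology, so Hopf--Rinow applies, exactly as in the proof that $\mathcal R^\g(V)$ is convex — it is $C^\g$-close to $f(q)$, and $\|\tilde f(q)+b\|_{C^m(D)}\ge\|b\|_{C^m(D)}-\|\tilde f(q)\|_{C^m(D)}>N$, so its range misses $Z_N$. Hence each $Z_N$ is a $Z$-set and $\mathcal R^\g(V)$ is $\s Z$. The essential ingredients are thus the uniform $C^m(D)$-bound over the compact test family, obtained from the partition-of-unity (convex-combination) approximation, together with the standard $C^\g$-small/$C^m$-large bump coming from $m>\g$.
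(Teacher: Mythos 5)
Your proof is correct and follows essentially the same route as the paper: the paper's own proof of this lemma simply adapts its Lemma~\ref{lem: O is sigmaZ} (decomposition into sets with bounded higher-order norms on a coordinate disk, closedness via the compact embedding $C^m\hookrightarrow C^\g$, and the $Z$-set property via a partition-of-unity convex approximation followed by a $C^\g$-small, $C^m$-large perturbation), with exactly your modification of taking the perturbation to be a nonnegative-definite $2$-tensor so as to stay inside $\mathcal R^\g(V)$. The only differences are cosmetic, e.g.\ you use a single integer norm exponent $m>\g$ and an explicit oscillatory bump where the paper uses a range of H\"older exponents and an abstract perturbation $\a$.
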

\begin{proof}
With minor modifications the argument of Lemma~\ref{lem: O is sigmaZ}
works for $\mathcal R^\g(V)$, namely we 
choose $\a$ to take values in the space of nonnegative definite $2$-tensors on $V$
which ensures that $f(q)+\a\in\mathcal R^\g(V)$ for all $q$.
\end{proof}

\begin{lem}
\label{lem: R is in M2}
$\mathcal R^\g(V)$ belongs to $\M_2$.
\end{lem}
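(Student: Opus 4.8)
The plan is to present $\mathcal R^\g(V)$ as the image of a $G_\de$ subset under the identity map from the smooth to the $C^\g$ topology, so that Lemma~\ref{lem: O is in M2} applies directly. The only wrinkle is that Lemma~\ref{lem: O is in M2} is phrased for spaces of maps into a fixed $\R^n$, whereas $\mathcal R^\g(V)$ lives inside the tensor space $\mathcal T^\g(V)$. So first I would realize $\mathcal T^\g(V)$ as a closed linear subspace of some $\G^\g(V,\R^n)$.

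To do this, fix a closed Whitney embedding $V\subset\R^N$ and equip $\R^N$ with its standard inner product. Each smooth symmetric $2$-tensor $t$ on $V$ assigns to $p\in V$ a symmetric bilinear form on $T_pV\subset\R^N$; extend it by zero on the orthogonal complement of $T_pV$ (with vanishing cross terms) to obtain a symmetric $N\times N$ matrix depending smoothly on $p$, that is, a smooth map $\hat t\co V\to\mathrm{Sym}(N)\cong\R^n$ with $n=N(N+1)/2$. The assignment $t\mapsto\hat t$ is linear and injective, and in local coordinates the entries of $\hat t$ are smooth-coefficient linear combinations of the components of $t$ and conversely; hence for each $\g\in[0,\infty]$ it is a linear topological embedding of $\mathcal T^\g(V)$ onto a closed linear subspace of $\G^\g(V,\R^n)$, and the identification is the same for all $\g$.

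With this identification in place I would argue as follows. Applying Lemma~\ref{lem: R is G_delta} with $\g=\infty$ shows that $\mathcal R^\infty(V)$ is $G_\de$ in the Fr\'echet space $\mathcal T^\infty(V)$; since $\mathcal T^\infty(V)$ is a closed, hence $G_\de$, linear subspace of $\G^\infty(V,\R^n)$, the set $\mathcal R^\infty(V)$ is $G_\de$ in $\G^\infty(V,\R^n)$. Now $\mathcal R^\infty(V)$ and $\mathcal R^\g(V)$ share the same underlying set of complete smooth metrics, and under the embedding the subspace topology induced from $\G^\g(V,\R^n)$ coincides with the $C^\g$ topology of $\mathcal R^\g(V)$. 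Thus $\mathcal R^\g(V)$ is precisely the image of the $G_\de$ set $\mathcal R^\infty(V)$ under the identity $\id\co\G^\infty(V,\R^n)\to\G^\g(V,\R^n)$, and Lemma~\ref{lem: O is in M2} yields $\mathcal R^\g(V)\in\M_2$.

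The step needing the most care is the embedding: one must check that the matrix realization is a topological linear embedding for every $C^\g$ topology simultaneously and that its image is closed, so that the $G_\de$ property transfers from $\mathcal T^\infty(V)$ to $\G^\infty(V,\R^n)$. This is routine once one passes to charts, since the tensor and matrix components are related by smooth invertible linear changes and the constraint cutting out the image is a continuous pointwise linear condition. Alternatively, one can bypass the embedding altogether and repeat the proof of Lemma~\ref{lem: O is in M2} verbatim with $\G^\g(V,\R^n)$ replaced by $\mathcal T^\g(V)$: the $F_\s$-measurability of $\id\co\mathcal T^\g(V)\to\mathcal T^\infty(V)$ is obtained from the same $C^m$-norm-ball sets $T_y$, after which the Kuratowski $(1,1)$-homeomorphism argument applies to the Polish space $\mathcal R^\infty(V)$.
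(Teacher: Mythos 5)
Your proposal is correct and follows essentially the same route as the paper: the paper likewise notes that $\mathcal R^\infty(V)$ is $G_\de$ in the Fr\'echet space $\mathcal T^\infty(V)$ by Lemma~\ref{lem: R is G_delta} and then invokes Lemma~\ref{lem: O is in M2}. The extra care you take in realizing $\mathcal T^\g(V)$ inside $\G^\g(V,\R^n)$ (or, alternatively, rerunning the proof of Lemma~\ref{lem: O is in M2} for tensors) is a routine point the paper leaves implicit, and either of your two ways of handling it is fine.
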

\begin{proof}
By Lemma~\ref{lem: R is G_delta} the subspace $\mathcal R^\infty(V)$ is $G_\de$
in the linear space of all smooth $2$-tensors on $V$ with $C^\infty$ topology,
so $\mathcal R^\g(V)$ is completely metrizable so that 
Lemma~\ref{lem: O is in M2} applies.
\end{proof}

\begin{lem}
\label{lem: R is M2-univ}
$\mathcal R^{\g}(V)$ is $\M_2$-universal for every finite $\g$.
\end{lem}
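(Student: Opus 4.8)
The plan is to produce, inside $\mathcal R^\g(V)$, a closed subspace that is $\M_2$-universal, by transporting the function-space machinery of Sections~\ref{sec: aux function space}--\ref{sec: M2-univ operator D} through a conformal parametrization. Concretely, I would fix a smooth complete metric $g_0$ on $V$ and a top-dimensional closed disk $D\subset V$ sitting in a coordinate chart, and parametrize the conformal metrics $e^{2u}g_0$ that agree with $g_0$ outside $D$ by the functions $u$ supported in $D$.

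First I would manufacture the relevant pair of function spaces by applying Theorem~\ref{thm: criterion M_2-univer} with $N=V$, $l=0$, the zero operator $\mathfrak D\co C^0(V)\to C^0(V)$, $h_\bu=0$ and $\eta=-1$; then the constraint $\mathfrak D u\vert_D\ge\eta$ is vacuous, the required inequality $\mathfrak D h_\bu\vert_D>\eta$ holds, and the resulting $C^n_\bu$ is exactly the space of $C^n$ functions on $V$ that vanish on $V\setminus\mathrm{Int}\,D$, with $C^\infty_\bu$ its smooth members. By that theorem the sequence $(C^n_\bu,C^\infty_\bu)_{n\ge 0}$ satisfies the hypotheses of Theorem~\ref{thm: bullet M2 univer}, so $(C^n_\bu,C^\infty_\bu)$ is $(\M_0,\M_2)$-preuniversal.

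Next I would transfer this to metrics. Writing $\g=k+\a$, fix an integer $n>\g$, let $X$ be the Hausdorff space of $C^\g$ symmetric $2$-tensors on $V$ with the $C^\g$ topology (so that $\mathcal R^\g(V)\subset X$ as a subspace), and define $f\co C^n_\bu\to X$ by $f(u)=e^{2u}g_0$. Because $u$ vanishes off the compact set $D$, each $f(u)$ is a complete $C^\g$ metric equal to $g_0$ near infinity, and $f(u)$ is smooth exactly when $u$ is; thus $f(C^\infty_\bu)\subset\mathcal R^\g(V)$. The map $f$ is injective since $g_0$ is pointwise nondegenerate, and it is continuous because the Nemytskii operator $u\mapsto e^{2u}$ is continuous on the Banach algebra $C^n(D)$ and $C^n$ includes continuously into $C^\g$. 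Corollary~\ref{cor: Cbullet is M2-universal} then gives that $f(C^\infty_\bu)$, whose subspace topology from $X$ coincides with the $C^\g$ topology it inherits from $\mathcal R^\g(V)$, is $\M_2$-universal.

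It remains to see that $f(C^\infty_\bu)$ is closed in $\mathcal R^\g(V)$, and this is the step I expect to be the crux. Suppose $e^{2u_i}g_0\to g$ in $\mathcal R^\g(V)$. Evaluating at a point $x$, nondegeneracy of $g_0(x)$ forces $e^{2u_i(x)}$ to converge to some $\l(x)\ge 0$ with $g(x)=\l(x)g_0(x)$, and positive-definiteness of the limit $g$ gives $\l(x)>0$; thus conformality to $g_0$ is a pointwise closed condition and survives in the limit. Hence $g=e^{2u}g_0$ with $u=\tfrac12\log(g/g_0)$, which is smooth because $g$ and $g_0$ are smooth and which vanishes off $D$, so $g\in f(C^\infty_\bu)$. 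Once closedness is in hand, any $M\in\M_2$ embeds as a closed subset of the $\M_2$-universal space $f(C^\infty_\bu)$, hence as a closed subset of $\mathcal R^\g(V)$, completing the proof. The delicate point throughout is the choice of the auxiliary target $X$: it must be wide enough to receive $f$ on the non-smooth functions of $C^n_\bu$, yet arranged so that $f(C^\infty_\bu)$ lands closedly in the smooth stratum $\mathcal R^\g(V)$, which is precisely what the pointwise-conformality argument secures.
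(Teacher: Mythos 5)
Your proof is correct and follows essentially the same route as the paper: both build a closed $\M_2$-universal subset of $\mathcal R^\g(V)$ by feeding a space of conformal factors supported in a coordinate disk through Theorem~\ref{thm: criterion M_2-univer} and Corollary~\ref{cor: Cbullet is M2-universal}, and then verify closedness by recovering the conformal factor pointwise from the limit metric. The only (cosmetic) difference is your parametrization $u\mapsto e^{2u}g_0$ with the vacuous constraint $\mathfrak D=0$, versus the paper's $u\mapsto ug$ with $\mathfrak D$ the inclusion and the constraint $u\vert_D\ge 1$ guaranteeing positive definiteness.
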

\begin{proof}
It is enough to show that $\mathcal R^{\g}(V)$ contains a closed $\M_2$-universal
subspace.
Fix any integer $l\ge \g$ and 
let $\mathfrak D\co C^l(V)\to C^0(V)$ be the inclusion. Let $h_\bu\in C^\infty(V)$ be the constant 
function that equals $2$ at all points, let $\eta=1$, and let $D$ be an embedded coordinate disk in $V$. 
Let $C^l_\bu$ be the set of functions $u\in C^l(V)$ with 
such that $u\vert_D\ge \eta$ and $u\vert_{V\setminus \mathrm{Int}\,D}=h_\bu$.
Let $X$ be the space of all metrically complete $C^l$-Riemannian metrics on $V$ with the $C^\g$ topology.
By Theorem~\ref{thm: criterion M_2-univer} and Corollary~\ref{cor: Cbullet is M2-universal}
for any continuous injective map from $f\co C^l_\bu\to X$ the space
$f(C^\infty(V)\cap C^l_\bu)$ is $\M_2$-universal. 

To define $f$ fix $g\in \mathcal R^{\infty}(V)$
and let $f(u)=u g$. The map $f$ is continuous as $l\ge\g$ and injective because $g$ is positive definite. 
Each metric in $f(C^\infty(V)\cap C^l_\bu)$ is smooth because $u$ is smooth,
and complete since $ug=2g$ outside $D$ and $2g$ is complete.

It remains to show that $f(C^\infty(V)\cap C^l_\bu)$ is closed in $\mathcal R^{\g}(V)$.
To this end suppose $u_ig\in f(C^\infty(V)\cap C^l_\bu)$ converges in $C^\g$ topology to 
a smooth complete metric $g_\ast$. Thus $g_\ast=2g$ outside $D$.
Choose a $g$-unit vector field $U$ on a neighborhood $W$ of $D$.
In the $C^\g$ topology on $W$
the functions $u_i=u_i g(U,U)$ converge to $g_\ast(U,U)$, which equals $2$ on $W\setminus D$.
Let $u$ be a function that equals $g_\ast(U,U)$ on $W$ and equals $2$ outside $D$; clearly $u$ is smooth.
Hence the metrics $u_ig$ converge to $ug$ in the $C^\g$ topology on $V$. By uniqueness of the limit
$g_\ast=ug$.
\end{proof}

\begin{rmk} The same proofs show that the space of all (complete or not) smooth Riemannian metrics
on a manifold $V$
equipped with the $C^\g$ topology is homeomorphic to $\ell^2$ when $\g=\infty$ and $\M_2$-universal if  $\g$ is finite.\end{rmk}

\begin{thm}
\label{thm: R is absorbing}
$\mathcal R^{\g}(V)$ is $\M_2$-absorbing, and hence is homeomorphic to $\Si^\o$.  
\end{thm}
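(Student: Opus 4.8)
The plan is to verify the four conditions in the definition of $\mathcal C$-absorbing with $\mathcal C=\M_2$, using the lemmas already established in Sections~\ref{sec: R smooth}--\ref{sec: R C^gamma}, and then invoke the uniqueness of $\M_2$-absorbing spaces together with Example~\ref{ex: M2 and Sigma to the omega} to conclude the homeomorphism with $\Si^\o$. Recall from the definition of $\mathcal C$-absorbing that one must check: (i) strong $\M_2$-universality; (ii) $\mathcal R^\g(V)$ is a countable union of $Z$-sets; (iii) $\mathcal R^\g(V)$ is homeomorphic to a homotopy dense subset of an $\ell^2$-manifold; and (iv) $\mathcal R^\g(V)$ is a countable union of closed subsets belonging to $\M_2$.

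First I would dispose of the easier conditions. Condition (ii) is exactly Lemma~\ref{lem: R is sigmaZ}, which asserts that $\mathcal R^\g(V)$ is $\s Z$. For condition (iii), I would combine Lemma~\ref{lem: R is G_delta} with Lemma~\ref{lem: convex embeds as homot dense}: since $\mathcal R^\g(V)$ is a convex $G_\de$ subset of the separable Fr\'echet space $\mathcal T^\g(V)$, and its closure $\mathcal N^\g(V)$ is not locally compact (for $\g=\infty$ this is established in Section~\ref{sec: R smooth}, and for finite $\g$ the same argument applies since the $C^\g$ topology is coarser), Lemma~\ref{lem: convex embeds as homot dense} yields a homotopy dense embedding into $\ell^2$. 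Condition (iv) follows from Lemma~\ref{lem: R is in M2}, which places the whole space $\mathcal R^\g(V)$ in $\M_2$; one takes the trivial countable cover by a single closed set, namely $\mathcal R^\g(V)$ itself, since $\M_2$ is closed under homeomorphism.

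The main work is condition (i), strong $\M_2$-universality. Here the plan is to upgrade the plain $\M_2$-universality supplied by Lemma~\ref{lem: R is M2-univ} to strong universality using the general machinery relating the two notions in a $\s Z$ ANR that embeds as a homotopy dense subset of an $\ell^2$-manifold. The key structural fact is that for a space satisfying conditions (ii) and (iii), $\M_2$-universality of the space (together with the $\M_2$-universality being witnessed inside a convex set, so that the relevant approximation moves can be performed by straight-line homotopies into $Z$-sets) forces strong $\M_2$-universality; this is precisely the content of the characterization theorems in~\cite{BRZ-book} for $\M_2$-absorbing spaces, where one shows that for a convex $\s Z$ ANR the universality and strong universality coincide. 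Concretely, given a closed $B\subset C\in\M_2$ and a map $f\co C\to\mathcal R^\g(V)$ restricting to a $Z$-embedding on $B$, I would use the convexity of $\mathcal R^\g(V)$ to push $f$ off the relevant $Z$-sets by a small controlled homotopy, employing Lemma~\ref{lem: R is M2-univ} to realize the $\M_2$-target as a $Z$-embedding and the $Z$-set unknotting available in homotopy dense subsets of $\ell^2$-manifolds to match up with $f\vert_B$.

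The hardest step will be verifying strong $\M_2$-universality rather than merely $\M_2$-universality, since the former requires the approximation to be by a $Z$-embedding that agrees with a prescribed $Z$-embedding on a closed subset and is controlled by an arbitrary open cover. I expect the cleanest route is not to prove this by hand but to appeal directly to the characterization in~\cite{BRZ-book} (the same source behind Example~\ref{ex: M2 and Sigma to the omega}) that a convex set in a Fr\'echet space which is $\M_2$-universal, $\s Z$, and homotopy densely embedded in an $\ell^2$-manifold is automatically $\M_2$-absorbing; all four hypotheses have just been assembled. Once $\M_2$-absorbingness is in hand, the class $\M_2$ is closed under homeomorphisms, closed subsets, and closed unions, and is local, so the uniqueness theorem for $\mathcal C$-absorbing spaces from~\cite{BesMog} applies. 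Since both $\mathcal R^\g(V)$ and $\Si^\o$ are $\M_2$-absorbing (the latter by Example~\ref{ex: M2 and Sigma to the omega}) and both are ARs hence homotopy equivalent, they are homeomorphic, completing the proof.
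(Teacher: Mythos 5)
Your overall strategy is exactly the paper's: verify (ii) via Lemma~\ref{lem: R is sigmaZ}, (iv) via Lemma~\ref{lem: R is in M2}, (iii) via Lemma~\ref{lem: convex embeds as homot dense}, upgrade Lemma~\ref{lem: R is M2-univ} to strong $\M_2$-universality using convexity, and finish with Bestvina--Mogilski uniqueness plus contractibility. Two steps need tightening, though. First, for finite $\g$ the space $\mathcal T^\g(V)$ of \emph{smooth} symmetric $2$-tensors with the $C^\g$ topology is a separable locally convex metric linear space but is \emph{not} complete, hence not Fr\'echet, so Lemma~\ref{lem: convex embeds as homot dense} cannot be applied with $\mathcal T^\g(V)$ as the ambient space as you propose. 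The paper instead passes to the completion $\overline{\mathcal T}^\g(V)$ (the Fr\'echet space of all $C^\g$ tensors) and invokes the \emph{other} clause of Lemma~\ref{lem: convex embeds as homot dense}: the closure of $\mathcal R^\g(V)$ there contains non-smooth metrics and so is not contained in $\mathrm{Aff}\,\mathcal R^\g(V)$. Your non-local-compactness route could also be salvaged in the completion, but note that the closure there is no longer $\mathcal N^\g(V)$, so the reduction to Section~\ref{sec: R smooth} is not as immediate as you suggest.

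Second, on strong universality: there is no general principle that a convex $\s Z$ ANR which is $\M_2$-universal and homotopy dense in an $\ell^2$-manifold is automatically strongly $\M_2$-universal; the precise tool the paper uses is \cite[Proposition 5.3.5]{BRZ-book}, whose hypothesis is that the convex set contains an $\M_2$-universal subset that is \emph{closed in the ambient linear space} $\mathcal T^\g(V)$, not merely closed in $\mathcal R^\g(V)$. This is why the paper revisits the proof of Lemma~\ref{lem: R is M2-univ} to observe that $f(C^\infty(V)\cap C^l_\bu)$ is closed in $\mathcal T^\g(V)$. Your sketch gestures at the right mechanism (straight-line pushes in a convex set) but omits this closedness verification, which is the actual content of the upgrade. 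With these two repairs your argument coincides with the paper's proof.
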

\begin{proof}
Let $\overline{\mathcal T}^\g(V)$ be the closure of 
$\mathcal T^\g(V)$ of Section~\ref{sec: R smooth} in the Fr{\'e}chet 
space of all $C^\g$ $2$-tensors on $V$ with the $C^\g$ topology;
thus $\overline{\mathcal T}^\g(V)$ is a separable Fr{\'e}chet space. 
The closure of $\mathcal R^\g(V)$ in $\overline{\mathcal T}^\g(V)$ 
is not contained in $\mathrm{Aff}\, \mathcal R^\g(V)$
because it clearly contains some non-smooth metrics. 
Hence by Lemma~\ref{lem: convex embeds as homot dense} $\mathcal R^\g(V)$ is homeomorphic
to a homotopy dense subset of $\ell^2$. 
The proof of Lemma~\ref{lem: R is M2-univ} actually shows that 
the $\M_2$-universal subset $f(C^\infty(V)\cap C^l_\bu)$
is closed in $\mathcal T^\g(V)$. Therefore~\cite[Proposition 5.3.5]{BRZ-book}
implies that $\mathcal R^{\g}(V)$ is strongly $\M_2$-universal.
Thus Lemmas~\ref{lem: R is sigmaZ} and~\ref{lem: R is in M2}
imply that $\mathcal R^{\g}(V)$ is $\M_2$-absorbing. Since $\mathcal R^{\g}(V)$
is contractible, it is homeomorphic to $\Si^\o$.
\end{proof}

\section{Diffeomorphism groups and uniformization of $S^2$ and $RP^2$}
\label{sec: unif M}

Let $g_{_1}$ denote the standard curvature $1$ metric on $S^2$ and $RP^2$,
the unit sphere and its quotient by the antipodal map.
Henceforth we identify $S^2$ with $\overline{\mathbb C}$ so that
the antipodal map corresponds to $z\to-\bar z^{-1}$, and
the group of orientation-preserving conformal transformation of $(S^2, g_{_1})$ 
corresponds to $PSL(2,\mathbb C)$, which acts simply transitively on triples of distinct points of $S^2$.
Fix any triple of distinct points of $S^2$, and let $D^{\b}(S^2)$ be
the group of smooth orientation-preserving diffeomorphisms of $S^2$ fixing the triple and equipped with
the $C^{\b}$ topology, $\b\in [1,\infty]$. 

A diffeomorphism of $RP^2$ has two lifts to the orientation cover: a lift homotopic to the identity of $S^2$
and its composite with the antipodal map. So any 
diffeomorphism of $RP^2$ lifts to a unique orientation-preserving diffeomorphism of $S^2$
which by uniqueness must commute with the antipodal map. Since lifts of conformal diffeomorphism are conformal,
the orientation-preserving lifts of conformal diffeomorphisms of $(RP^2, g_{_1})$ are
precisely the elements of $PSL(2,\mathbb C)$ commuting with the antipodal map.
It is straightforward to check that
these lifts form the subgroup $PSU(2)\subset PSL(2,\mathbb C)$, which corresponds under the
above identification to $SO(3)$ acting on the unit sphere, and which 
acts simply transitively on the unit tangent bundle to $RP^2$.
Fix a vector in the unit tangent bundle of $RP^2$ and 
let $D^{\b}(RP^2)$ be the group of all diffeomorphisms of $RP^2$
that fix the vector.

If $M$ is $S^2$ or $RP^2$, then by
the Uniformization Theorem any metric on $M$ is of the form $\phi^*e^{-2u}g_{_1}$
where $u\in C^\infty(M)$ and $\phi$ is a diffeomorphism of $M$. 
(The standard accounts such as e.g.~\cite[Chapter 10]{Don} omit the case of
$RP^2$ which we include for completeness. Let $(S^2, \phi^*e^{-2u}g_{_1})$ be the pullback of a given metric 
$(RP^2, g)$ to the orientation cover. The covering isometry of $\phi^*e^{-2u}g_{_1}$
gives rise to an isometric involution $\iota$ of $e^{-2u}g_{_1}$.
Hence $\iota^*g_{_1}=e^{2(u\circ\iota-u)}g_{_1}$, and since the left hand side
has a transitive isometry group $u\circ\iota-u$ is constant, which we denote $\l$.
Precomposing $u\circ\iota-u=\l$ with $\iota$ gives $u-u\circ \iota=\l$, so $\l=0$.
Hence $\iota$ is an isometry of $g_{_1}$ and $u$ is $\iota$-invariant.
It follows that $e^{-2u}g_{_1}$ descends to a metric on $RP^2$ of desired form).

It is routine to check that $D^{\b}(M)$ is a topological group. 
(For $\b\notin\mathbb Z$ the relevant properties of H\"older map and inverses
of H\"older diffeomorphisms can be found, e.g., in~\cite[Section 2.2]{BHS-holder}).

Thus if $M$ equals $S^2$ or $RP^2$ then
no nontrivial element of $D^{\b}(M)$ is conformal, and any
orientation-preserving diffeomorphism of $M$ can be written uniquely as the composite
of an element of $D^{\b}(M)$ followed by a conformal automorphism.

For $\g\in [0,\infty]$ let
$\G^\g(M)$ denote the linear space of all smooth functions on $M$ with the $C^\g$
topology; thus as a set $\G^\g(M)$ equals $C^\infty(M)$.  

For $M$ equal to $S^2$ or $RP^2$ 
the above discussion shows that the map 
\begin{equation}
\label{form: bijection all}
D^{\g+1}(M)\times\G^\g(M)\to \mathcal R^{\g}(M)
\end{equation} 
given by $(\phi, u)\to \phi^*e^{-2u}g_{_1}$ 
is a continuous bijection.

The sectional curvature of $e^{-2u}g_{_1}$ equals $e^{2u}(1+\triangle_{g_{_1}} u)$
where $\Delta_{g_{_1}}$ is the $g_{_1}$-Laplacian, see~\cite[p.15, (1.3)]{KazWar-comp}.

Let $\O^{\g}_{\ge\l}(M)\subset \G^\g(M)$ be the subset of functions $u$ 
such that $e^{-2u}g_{_1}$ has sectional curvature $\ge \l$;
the subset $\O^{\g}_{>\l}(M)$ is defined similarly. 

\begin{lem}
\label{lem: convexity of exp}
The subsets $\O^{\g}_{\ge\l}(M)$ and $\O^{\g}_{>\l}(M)$ are convex in $\G^\g(M)$.
\end{lem}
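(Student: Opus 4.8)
The plan is to reduce convexity to a single pointwise inequality and then to apply the weighted arithmetic--geometric mean inequality. Recall from the curvature formula quoted above that $u\in\O^{\g}_{\ge\l}(M)$ precisely when $e^{2u}(1+\Delta_{g_{_1}}u)\ge\l$ at every point, with the obvious strict analogue for $\O^{\g}_{>\l}(M)$. Fix $u_0,u_1$ in the set and $t\in[0,1]$, and put $u_t=(1-t)u_0+tu_1$. Since $\Delta_{g_{_1}}$ is linear, the factor $1+\Delta_{g_{_1}}u$ is affine along the segment, so
\[
1+\Delta_{g_{_1}}u_t=(1-t)\bigl(1+\Delta_{g_{_1}}u_0\bigr)+t\bigl(1+\Delta_{g_{_1}}u_1\bigr),
\]
while $e^{2u_t}=\bigl(e^{2u_0}\bigr)^{1-t}\bigl(e^{2u_1}\bigr)^{t}$ pointwise. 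Everything now takes place at one point, so it suffices to prove the scalar statement: if $\beta_0,\beta_1,s_0,s_1$ are reals with $e^{2s_i}\beta_i\ge\l$, then $e^{2s_t}\beta_t\ge\l$, where $s_t$ and $\beta_t$ are the corresponding convex combinations.

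The key step is the case $\l\ge0$. When $\l>0$ the hypothesis forces $\beta_0,\beta_1>0$, so the weighted AM--GM inequality $(1-t)\beta_0+t\beta_1\ge\beta_0^{\,1-t}\beta_1^{\,t}$ applies and gives
\[
e^{2s_t}\beta_t\ \ge\ \bigl(e^{2s_0}\beta_0\bigr)^{1-t}\bigl(e^{2s_1}\beta_1\bigr)^{t}\ \ge\ \l^{\,1-t}\l^{\,t}=\l .
\]
When $\l=0$ one simply observes $\beta_0,\beta_1\ge0$, hence $\beta_t\ge0$ and $e^{2s_t}\beta_t\ge0$. The same computations with strict inequalities handle $\O^{\g}_{>\l}(M)$. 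Reading this back over all points of $M$ shows that the entire segment from $u_0$ to $u_1$ stays in the set, which is the desired convexity for $\l\ge0$.

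The step I expect to be the main obstacle is controlling the sign of $1+\Delta_{g_{_1}}u$, that is, the case $\l<0$. Then $\beta_0,\beta_1$ may be negative, AM--GM is unavailable, and in fact the scalar region $\{(s,\beta):e^{2s}\beta\ge\l\}$ is exactly the set of points lying above the graph of $s\mapsto\l e^{-2s}$, which for $\l<0$ is \emph{concave} and increasing; a region above a concave graph is not convex. Since the pointwise values $\bigl(u(p),\,1+\Delta_{g_{_1}}u(p)\bigr)$ can be prescribed independently through the $2$-jet at $p$, this non-convexity of the target region propagates back to $\O^{\g}_{\ge\l}(M)$, so no purely convexity-based pointwise argument can succeed once $\l<0$. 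I would therefore expect the clean statement to require $\l\ge0$, and before attempting the remaining range I would examine closely how the $\l<0$ case is meant to be used, since it seems to call either for that restriction or for a genuinely different parametrization of the metrics.
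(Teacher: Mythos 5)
For $\l\ge 0$ your argument is correct and is essentially the paper's own: the weighted AM--GM step applied to $\b_i=e^{-2u_i}$ is precisely the convexity of $x\mapsto e^{-2x}$ that the paper invokes, and the two chains of inequalities coincide term by term.

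Your reservation about $\l<0$ is not a gap in your write-up; it identifies a genuine error in the paper. The paper's proof ends with
\[
1+\triangle_{g_{_1}} u\ \ge\ \l\bigl(te^{-2u_{_1}}+(1-t)e^{-2u_{_0}}\bigr)\ \ge\ \l e^{-2u},
\]
and the second inequality is obtained by multiplying $te^{-2u_{_1}}+(1-t)e^{-2u_{_0}}\ge e^{-2u}$ by $\l$, which reverses the inequality when $\l<0$; so the paper's argument, like yours, only establishes convexity for $\l\ge 0$. Moreover, your pointwise non-convexity heuristic does propagate to the function space, so the lemma is actually false for $\l<0$: choose $u_{_0}$ so that $e^{-2u_{_0}}g_{_1}$ has curvature $K_0\ge\l$ with $K_0\equiv\l$ on a nonempty open set $U$ (such metrics exist in the conformal class of $g_{_1}$ by uniformization); note that $u_{_0}$ cannot be constant on $U$, since a constant conformal factor would give positive curvature there; pick $p,q\in U$ with $u_{_0}(p)\ne u_{_0}(q)$ and an isometry $\phi$ of $g_{_1}$ with $\phi(p)=q$, and set $u_{_1}=u_{_0}\circ\phi$. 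Then $u_{_1}\in\O^{\g}_{\ge\l}(M)$, while the curvature of $e^{-2u}g_{_1}$ for the midpoint $u=\frac12(u_{_0}+u_{_1})$ at $p$ equals $\l\cosh\bigl(u_{_0}(p)-u_{_0}(q)\bigr)<\l$ --- exactly your strict AM--GM computation. A small perturbation gives the same conclusion for $\O^{\g}_{>\l}(M)$. So the lemma needs the hypothesis $\l\ge 0$ (which suffices for the nonnegative-curvature applications but not for Theorem~\ref{thm: main compact} as stated for every real $\l$), and your decision to stop at $\l\ge 0$ and flag the rest was the right call.
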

\begin{proof}
Let $u=tu_{_1}+(1-t)u_{_0}$ where $t\in [0,1]$ and $u_{_0}, u_{_1}\in \O^{\g}_{\ge\l}(M)$.
Then 
{\small\[
1+\triangle_{g_{_1}} u=t(1+\triangle_{g_{_1}} u_{_1})+(1-t)(1+\triangle_{g_{_1}} u_{_0})\ge 
\l(te^{-2u_{_1}}+(1-t)e^{-2u_{_0}})\ge \l e^{-2u}
\]}\par
where the last inequality holds by convexity of the exponential function.
Thus the sectional curvature of $e^{-2u}g_{_1}$ is $e^{2u}(1+\triangle_{g_{_1}} u)\ge\l$.
\end{proof}
The map (\ref{form: bijection all}) restricts to the continuous bijections
{\small\begin{equation}
\label{form: bijection >=0}
D^{\g+1}(M)\times\O^\g_{\ge \l}(M)\to \mathcal R_{\ge \l}^{\g}(M)
\qquad\ \ 
D^{\g+1}(M)\times\O^\g_{>\l}(M)\to \mathcal R_{>\l}^{\g}(M).
\end{equation}}\par
\begin{thm}
\label{thm: unif}
If $\g\notin\mathbb Z$, then the maps 
\textup{(\ref{form: bijection all})} and \textup{(\ref{form: bijection >=0})}
are homeomorphisms, and in particular, $D^{\g+1}(M)$, $\mathcal R_{\ge \l}^{\g}(M)$,
$\mathcal R_{>\l}^{\g}(M)$ are contractible.
\end{thm}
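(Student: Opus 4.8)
The plan is to upgrade the continuous bijections (\ref{form: bijection all}) and (\ref{form: bijection >=0}) to homeomorphisms by proving that their inverses are continuous, and then to read off contractibility by a formal argument. Since all the spaces in sight are metrizable, it suffices to work with convergent sequences. So suppose $g_i\to g$ in $\mathcal R^\g(M)$, write $g_i=\phi_i^*e^{-2u_i}g_{_1}$ with $\phi_i\in D^{\g+1}(M)$ and $u_i\in\G^\g(M)$ as supplied by the inverse of (\ref{form: bijection all}), and let $(\phi,u)$ correspond to $g$. The goal is to show $\phi_i\to\phi$ in $D^{\g+1}(M)$ and $u_i\to u$ in $\G^\g(M)$.

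First I would realize $\phi_i$ as a normalized solution of a Beltrami equation. The metric $g_i$ induces a conformal (complex) structure on $M$ whose position relative to the standard structure of $(M,g_{_1})$ is recorded by a Beltrami differential $\mu_i$. In any local conformal coordinate $\mu_i$ is a smooth algebraic expression in the components $E,F,G$ of $g_i$ (involving $\sqrt{EG-F^2}>0$), so it depends continuously on $g_i$ in the $C^\g$ topology; hence $\mu_i\to\mu$ in $C^\g$, with $\|\mu_i\|_\infty<1$. The diffeomorphism straightening this conformal structure is the unique solution of $\partial_{\bar z}\phi_i=\mu_i\,\partial_z\phi_i$ normalized by the chosen triple of points on $S^2$, respectively by the chosen unit tangent vector on $RP^2$ (equivalently, on the orientation cover, the solution commuting with the antipodal map and fixing a point and a direction). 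This is exactly the normalization built into the definition of $D^{\g+1}(M)$, so $\phi_i\in D^{\g+1}(M)$.

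The crux is the regularity and continuous dependence of the normalized Beltrami solution on its dilatation: for $\g\notin\Z$ the solution operator $\mu\mapsto\phi$ carries $C^\g$-convergent dilatations to $C^{\g+1}$-convergent diffeomorphisms. This one-derivative gain is the H\"older (Schauder-type) estimate for the Beltrami equation, and it is precisely the step that fails at integer $\g$, where the singular integral operator governing the equation is unbounded in the $C^k$ norm and the solution map loses the gain of a full derivative. Granting this dependence (as established in~\cite{BelHu-modr2, BelHu-modr2-err}), we obtain $\phi_i\to\phi$ in $D^{\g+1}(M)$. Once $\phi_i$ is under control, the conformal factor is recovered from $e^{-2u_i}g_{_1}=(\phi_i^{-1})^*g_i$; since $\phi_i\to\phi$ in $C^{\g+1}$ and $g_i\to g$ in $C^\g$, the pullback $(\phi_i^{-1})^*g_i$ converges in $C^\g$, and comparing with $g_{_1}$ yields $u_i\to u$ in $\G^\g(M)$. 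This proves (\ref{form: bijection all}) is a homeomorphism; the identical computation, restricted to the convex subsets $\O^\g_{\ge\l}(M)$ and $\O^\g_{>\l}(M)$ with the subspace topology, shows the two maps in (\ref{form: bijection >=0}) are homeomorphisms.

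Contractibility then follows formally. The set $\mathcal R^\g(M)$ is convex, hence contractible, and $\G^\g(M)$ is a linear space, hence contractible. Since $A$ is homotopy equivalent to $A\times B$ whenever $B$ is contractible, the homeomorphism (\ref{form: bijection all}) yields that $D^{\g+1}(M)$ is homotopy equivalent to $D^{\g+1}(M)\times\G^\g(M)\cong\mathcal R^\g(M)$, and is therefore contractible. Combining this with Lemma~\ref{lem: convexity of exp}, which shows $\O^\g_{\ge\l}(M)$ and $\O^\g_{>\l}(M)$ are convex and hence contractible, the homeomorphisms (\ref{form: bijection >=0}) present $\mathcal R_{\ge\l}^\g(M)$ and $\mathcal R_{>\l}^\g(M)$ as products of contractible spaces, hence contractible. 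The main obstacle throughout is the Beltrami step: establishing the genuine one-derivative gain together with continuity of the solution map in the H\"older scale, and understanding why both degenerate when $\g\in\Z$.
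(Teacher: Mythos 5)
Your proposal is correct and follows essentially the same route as the paper: the paper's proof is a citation to \cite[Theorem 4.1]{BelHu-modr2} (as corrected in \cite{BelHu-modr2-err}), whose key ingredient is exactly the H\"older-continuous dependence of the normalized Beltrami solution on the dilatation that you isolate as the crux, and the contractibility argument via convexity of $\mathcal R^\g(M)$, $\O^\g_{\ge\l}(M)$, $\O^\g_{>\l}(M)$ is the same. Your write-up just unpacks in more detail what the paper delegates to the reference.
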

\begin{proof}
This is a minor modification of~\cite[Theorem 4.1]{BelHu-modr2} whose statement incorrectly assumes
$\g\in\mathbb Z$ instead of $\g\notin\mathbb Z$, cf.~\cite{BelHu-modr2-err}. The proof is the same
and the main ingredient is that the solution of Beltrami equation in a chart depends
in H\"older topology on the dilatation, which is where $\g\notin\mathbb Z$ is used. 
Relevant properties of H\"older map can be found, e.g., in~\cite[Section 2.2]{BHS-holder}.
Since $D^{\g+1}(M)$ is a retract of 
the convex set $\mathcal R^{\g}(M)$, it is contractible. This together with convexity of $\O_{\ge\l}^\g(M)$ and $\O_{>\l}^\g(M)$
implies contractibility of $\mathcal R_{\ge\l}^{\g}(M)$ and $\mathcal R_{>\l}^{\g}(M)$.
\end{proof}

\begin{rmk}
S.~Smale proved in~\cite{Sma-2sph} that $D^\b(S^2)$ is contractible for every integer $\b>1$.
The same should be true for $\b=1$, as well as for $D^{\b}(RP^2)$ where $\b$ is a positive integer, 
but we have not checked the details
and are not aware of any proof in the literature. 
\end{rmk}

\begin{cor} 
\label{cor: diff loc homeo to normed space}
Let $M$ be $S^2$ or $RP^2$. Then $D^\infty(M)$ is homeomorphic to $\ell^2$.
If $\b\in [1,\infty)$, then
$D^{\b}(M)$ is locally homeomorphic to a normed space so that
$D^{\b}(M)$ is an ANR.
\end{cor}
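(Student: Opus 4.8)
The plan is to construct an exponential chart at the identity and to spread it over the whole group by left translations, so that $D^\b(M)$ becomes locally homeomorphic to a space of smooth vector fields carrying the $C^\b$ topology; this model space is a normed space when $\b$ is finite and a separable Fr\'echet space when $\b=\infty$, which yields the two assertions respectively.

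Fix the round metric $g_{_1}$ and its exponential map $\exp$, and for a smooth vector field $X$ set $\phi_X(p)=\exp_p(X(p))$. If $\|X\|_{C^1}$ is small, then $\phi_X$ is $C^1$-close to $\id$, hence a smooth diffeomorphism, and it is orientation preserving since it is isotopic to the identity. Let $\mathfrak{X}^\b_0$ be the space of smooth vector fields subject to the linear conditions that force $\phi_X$ to fix the prescribed data, equipped with the $C^\b$ topology: for $M=S^2$ these are $X(p_i)=0$ at the three fixed points, while for $M=RP^2$ they are $X(p)=0$ together with $\nabla_v X=0$ at the fixed unit vector $v\in T_pRP^2$, since a short computation gives $d(\phi_X)_p(v)=v+\nabla_v X$ when $X(p)=0$. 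These are closed linear conditions, so $\mathfrak{X}^\b_0$ is a normed space for finite $\b$ and a separable infinite dimensional Fr\'echet space for $\b=\infty$, and for small $X\in\mathfrak{X}^\b_0$ one has $\phi_X\in D^\b(M)$.

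The key step is to verify that $X\mapsto\phi_X$ is a homeomorphism from a neighborhood of $0$ in $\mathfrak{X}^\b_0$ onto a $C^\b$-neighborhood of $\id$ in $D^\b(M)$. Its inverse sends $\psi$ near $\id$ to the vector field $X(p)=\exp_p^{-1}(\psi(p))$, which is smooth because $\psi$ is smooth and small in $C^\b$ because $\psi$ is $C^\b$-close to $\id$. Continuity of both maps in the $C^\b$ topology follows from the fact that composing a $C^\b=C^{k+\a}$ map with a fixed smooth map (here $\exp$ or $\exp^{-1}$ read in charts) is continuous in the $C^\b$ topology, see Section~\ref{sec: Cgamma} and~\cite[Section 2.2]{BHS-holder}. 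Since $D^\b(M)$ is a topological group, left translation by any $\psi$ carries this chart to a chart near $\psi$, so $D^\b(M)$ is locally homeomorphic to $\mathfrak{X}^\b_0$. I expect this verification, namely keeping track of the $C^\b$ estimates while all objects remain smooth and encoding the fixed data as jet conditions, to be the main technical point.

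When $\b$ is finite, $\mathfrak{X}^\b_0$ is a normed space, hence convex and so an AR by~\cite[Corollary II.5.3]{BP-book}, and every open subset of it is an ANR; thus $D^\b(M)$ is locally an ANR and therefore an ANR by~\cite[Theorem II.5.1]{BP-book}. When $\b=\infty$, the model $\mathfrak{X}^\infty_0$ is a separable infinite dimensional Fr\'echet space, hence homeomorphic to $\ell^2$ by the Anderson--Kadec theorem~\cite{BP-book}, so the separable metrizable space $D^\infty(M)$ is an $\ell^2$-manifold. By Theorem~\ref{thm: unif} the group $D^\infty(M)$ is contractible, and a contractible $\ell^2$-manifold is homeomorphic to $\ell^2$ by the classification of $\ell^2$-manifolds up to homotopy type~\cite{BP-book}. (Alternatively, $D^\infty(M)$ is a non-locally-compact Polish ANR admitting a group structure, so it is an $\ell^2$-manifold by Dobrowolski--Toru\'nczyk~\cite{DobTor}.) This proves both statements.
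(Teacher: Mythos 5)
Your proposal is correct and follows essentially the same route as the paper: an exponential chart at the identity identifying a neighborhood of $\id$ in $D^\b(M)$ with a neighborhood of $0$ in the space of smooth vector fields (with the same vanishing/jet conditions encoding the fixed points or fixed tangent vector), spread over the group by homogeneity, followed by the normed-space/AR argument for finite $\b$ and the Fr\'echet-space/contractible-$\ell^2$-manifold argument for $\b=\infty$. The only cosmetic difference is that the paper phrases the finite-$\b$ conclusion via ``an open ball in a normed space is homeomorphic to the whole space,'' while you go directly through local ANR-ness; both are valid.
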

\begin{proof}
Since $D^{\b}(M)$ is topologically homogeneous it is enough
to consider a neighborhood of the identity. Any diffeomorphism $\phi$ of $M$ 
that is sufficiently close to the identity can be written as 
$\phi(p)=\exp_p X_\phi(p)$ where $\exp$ is the $g_{_1}$-exponential map and
$X_\phi$ is a smooth vector field. The map $\phi\to X_\phi$ defines a topological embedding of
a neighborhood of the identity of $D^{\b}(M)$ into the linear space of smooth vector fields
on $M$ with the $C^{\b}$ topology. 
Conversely, every $C^\b$ vector field on $M$ can be exponentiated to a
$C^\b$ self-map of $M$. If $M=S^2$, then $\phi$ fixes
a triple of points if and only if $X_\phi$ vanishes at the points.
If $M=RP^2$, then $\phi$ fixes a point tangent vector $v$ at $p\in M$ if and only if $X_\phi(p)=0$
and the differential of $X_\phi$ fixes $v$ (where we used that the differential of $\exp_p$
is the identity). We denote by $\X^{\b}(M)$ the linear space
of smooth vector fields on $M$ equipped with
the $C^{\b}$ norm and subject to the above vanishing conditions.
Note that $\X^{\b}(M)$ is normed for finite $\b$ and Fr{\'e}chet for $\b=\infty$.
Since $\b\ge 1$ and $M$ is compact, $C^\b$ diffeomorphisms form an open subset
in $C^\b(M,M)$, so a sufficiently small neighborhood of zero in $\X^{\b}(M)$ exponentiates
to a neighborhood of the identity in $D^\b(M)$. 
If $\b=\infty$, then like any separable Fr{\'e}chet space $\X^{\b}(M)$ is homeomorphic to $\ell^2$,
and hence so is $D^\infty(M)$ being a contractible $\ell^2$-manifold.
If $\b$ is finite, the identity in $D^{\b}(M)$ has a neighborhood homeomorphic
to an open ball in $\X^{\b}(M)$, and 
since any open ball and the ambient normed space are homeomorphic, the claim follows.
Finally, any normed space is an AR, and any locally 
ANR space is an ANR, see~\cite[Section II.5]{BP-book}.
\end{proof}

\section{Uniformization of nonnegatively curved planes}
\label{sec: unif of C}

Let $g_{_0}$ be the standard Euclidean metric on $\R^2$, which we identify with $\C$.
Orientation-preserving conformal automorphisms of $\C$ are precisely the affine maps
$z\to az+b$, $a,b\in\C$; the group acts simply transitively on pairs of distinct points 
of $\C$. Fix such a pair and let $D^\b(\C)$ be
the group of smooth orientation-preserving diffeomorphisms of $\C$ fixing the pair and equipped with
the $C^{\b}$ topology. 

As in Section~\ref{sec: unif M} we conclude that
$D^\b(\C)$ is a topological group, but the proof of 
Corollary~\ref{cor: diff loc homeo to normed space}
fails because $D^\b(\C)$ is no longer open in $C^\b(\C, \C)$.
It is shown in~\cite{Yag} that $D^\infty(\C)$ is homeomorphic to $\ell^2$, and we adapt his
argument for $\b\in [1,\infty)$ in Lemma~\ref{lem: diff C is ANR} 
to show that $D^\b(\C)$ is an ANR. Contractibility of $D^\infty(\C)$
is proved in Lemma~\ref{lem: diff C is contractible} by exhibiting
a explicit deformation retraction to a point.

\begin{rmk}
The proof of contractibility of $D^\b(S^2)$, $\b\notin\Z$, given in Section~\ref{sec: unif M}
does not work for $D^\b(\C)$ because metrics on $\C$ are not all conformally equivalent, and
hence it is unclear how to define an analog of the map (\ref{form: bijection all}).
\end{rmk}

A basic property of a subharmonic function $u$ is that the limit
\[
\a(u):=\lim_{r\to\infty} \frac{\sup\{u(z)\,:\,|z|=r\}}{\log r}
\]
exists in $[0,\infty]$. It is proved in~\cite[Theorem 1.1]{BelHu-modr2} that the nonnegatively curved metric
$e^{-2u}g_{_0}$ is complete if and only if $\a(u)\le 1$.

Let $\O^\g_{\ge 0}(\C)$ be the set of smooth subharmonic functions on $\C$
with $\a(u)\le 1$ equipped with the $C^\g$ topology, and let  $\O^\g_{>0}(\C)$
be the subspace of functions $u$ such that $\triangle_{g_{_0}} u$ is positive.
Both $\O^\g_{\ge 0}(\C)$ and $\O^\g_{>0}(\C)$ are convex 
by the proof of~\cite[Lemma 2.4]{BelHu-modr2}.

It is well known, see e.g.~\cite{BlaFia} that any complete nonnegative curved plane
is conformally equivalent to $(\C, g_{_0})$. This easily implies 
that the map $(\phi, u)\to \phi^*e^{-2u}g_0$ induces continuous bijections
{\small\begin{equation}
D^{\g+1}(\C)\times \O^\g_{\ge 0}(\C)\to\mathcal R_{\ge 0}^{\g}(\C)\qquad\ \ \
D^{\g+1}(\C)\times \O^\g_{>0}(\C)\to\mathcal R_{>0}^{\g}(\C)
\end{equation}}\par
which are homeomorphisms if $\g\notin\mathbb Z$, see~\cite[Theorem 4.1]{BelHu-modr2} 
and~\cite{BelHu-modr2-err}.

Let $M$ be $S^2$, $RP^2$ or $\C$ equipped with a complete metric $g_\k$ of constant curvature $\k\in\{0,1\}$, 
and as in Section~\ref{sec: unif M} we let $\G^\g(M)$ denote the locally convex linear metric space of all smooth functions on $M$ with the $C^\g$ topology.

\begin{lem} 
\label{lem: C GH closed}
Let $M$ be $S^2$, $RP^2$ or $\C$, and if $M=\C$ assume $\l=0$. 
Then $\O^\b_{\ge \l}(M)$ is closed in $\G^\b(M)$.
\end{lem}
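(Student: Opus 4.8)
The plan is to check closedness sequentially, which suffices since $\G^\b(M)$ is metrizable, and to reduce everything to $C^0$ convergence: if $u_i\to u$ in the $C^\b$ topology then in particular $u_i\to u$ uniformly on compact sets. The unifying idea is to rewrite each defining inequality in an integrated (weak) form involving only $u$ itself and not its derivatives, so that it survives passage to the $C^0$ limit.

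Consider first the compact case $M\in\{S^2,RP^2\}$, where $u\in\O^\b_{\ge\l}(M)$ means $1+\triangle_{g_{_1}}u\ge \l e^{-2u}$ pointwise. Since $M$ is closed, $\triangle_{g_{_1}}$ is self-adjoint with no boundary terms, so for smooth $u$ this pointwise inequality is equivalent to
\[
\int_M \phi\, dV_{g_{_1}}+\int_M u\,\triangle_{g_{_1}}\phi\, dV_{g_{_1}}\ \ge\ \l\int_M e^{-2u}\phi\, dV_{g_{_1}}\qquad\text{for all smooth }\phi\ge 0,
\]
using $\int_M(\triangle_{g_{_1}}u)\phi=\int_M u\,\triangle_{g_{_1}}\phi$ and the fact that a continuous function is $\ge 0$ iff it integrates nonnegatively against every $\phi\ge0$. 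For fixed $\phi\ge0$ and $u_i\to u$ uniformly, the first term is constant, the second converges because $\triangle_{g_{_1}}\phi$ is bounded, and the third converges because $e^{-2u_i}\to e^{-2u}$ uniformly; hence the inequality passes to the limit for every $\phi$, giving $u\in\O^\b_{\ge\l}(M)$.

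For the plane case $M=\C$, $\l=0$, the condition is the conjunction of two closed conditions. Subharmonicity $\triangle_{g_{_0}}u\ge0$ is closed by the same weak argument: for every compactly supported smooth $\phi\ge0$ the inequality $\int_\C u\,\triangle_{g_{_0}}\phi\ge0$ persists under locally uniform convergence, and for smooth $u$ it is equivalent to $\triangle_{g_{_0}}u\ge0$. The growth condition $\a(u)\le1$ is the delicate point, which I would handle by converting the asymptotic bound into a family of finite-radius inequalities. The tool is the classical fact that for subharmonic $u$ the quantity $\sigma_u(r):=\sup\{u(z):|z|=r\}$ is a convex function of $\log r$. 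Since the increments of a convex function are bounded by its asymptotic slope, and $\a(u)$ is exactly that slope, one obtains the equivalence
\[
\a(u)\le1\ \Longleftrightarrow\ \sigma_u(r)-\sigma_u(r_0)\le \log(r/r_0)\ \text{ for all } r>r_0>0 .
\]

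The crux is that each individual inequality $\sigma_u(r)-\sigma_u(r_0)\le\log(r/r_0)$ is closed under locally uniform convergence: the circle $\{|z|=r\}$ is compact, so $u\mapsto\sigma_u(r)$ is continuous, and a non-strict inequality between two such continuous functionals of $u$ cuts out a closed set. Intersecting over all pairs $r>r_0>0$ shows that $\{\a(u)\le1\}$ is closed within the subharmonic functions, and combining this with closedness of subharmonicity gives that $\O^\b_{\ge0}(\C)$ is closed. I expect the convexity-of-$\sigma_u$ step to be the main obstacle, since it is precisely what trades the limit defining $\a(u)$—which is not manifestly closed—for a closed family of finite-radius comparisons.
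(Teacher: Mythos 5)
Your proof is correct, but it runs along a genuinely different track from the paper's. The paper argues geometrically: $C^0$ convergence $u_i\to u$ yields pointed Gromov--Hausdorff convergence of the metrics $e^{-2u_i}g_\k\to e^{-2u}g_\k$, the lower curvature bound survives to the limit in the Alexandrov (comparison) sense, and for a smooth metric this recovers the sectional curvature bound; the growth condition $\a(u)\le 1$ is then handled by invoking the proof of Lemma~2.4 of~\cite{BelHu-modr2}. You instead stay entirely in analysis: on compact $M$ you integrate the inequality $1+\triangle_{g_{_1}}u\ge\l e^{-2u}$ against nonnegative test functions, which converts it into a family of conditions stable under uniform convergence and, for smooth limits, equivalent to the pointwise bound; on $\C$ you treat subharmonicity the same way and replace the asymptotic condition $\a(u)\le 1$ by the closed family of finite-radius inequalities $\sigma_u(r)-\sigma_u(r_0)\le\log(r/r_0)$ via the Hadamard three-circles convexity of $r\mapsto\sup_{|z|=r}u$, correctly noting that for a convex function the difference quotients are dominated by the asymptotic slope and conversely determine it. Both arguments use only locally uniform convergence. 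What the paper's route buys is brevity once the Alexandrov machinery is granted, and uniform treatment of the curvature bound without writing down the PDE; what yours buys is self-containedness --- it avoids synthetic comparison geometry entirely and, in particular, makes the closedness of the completeness condition $\a(u)\le 1$ explicit rather than deferring it to the cited lemma. One small point worth spelling out if you write this up: the three-circles characterization is applied to the limit $u$ only after its subharmonicity has been established, which your ordering of the two steps already ensures.
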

\begin{proof}
The convergence of $u_i\in\O^\b_{\ge \l}(M)$ to $u\in\G^\b(M)$ gives rise
to the pointed Gromov-Hausdorff convergence $e^{-2u_i}g_\l\to e^{-2u}g_\l$ of the corresponding
metrics. 
If $M$ is compact, then $(M, e^{-2u}g_{_1})$ has 
curvature $\ge\l$  in the comparison sense, and hence sectional curvature $\ge \l$,
i.e., $u\in \O^\b_{\ge \l}(M)$.
If $M=\C$, then a priori the limiting metric $e^{-2u}g_0$ might not be complete,
but in any case the completion of $(\C, e^{-2u}g_0)$
is an Alexandrov space of nonnegative curvature in 
the comparison sense. Like any Riemannian manifold, $(\C, e^{-2u}g_0)$ is a locally convex subset
of its completion, so it is also an Alexandrov space of nonnegative curvature,
and since $u$ is smooth the metric $e^{-2u}g_0$
has nonnegative sectional curvature, i.e., $\triangle_{g_{_0}} u\ge 0$. 
Since $u_i\in\O^\b_{\ge \l}(M)$ we have $\triangle_{g_{_0}} u_i\ge 0$ and $\a(u_i)\le 1$.
Now the proof of~\cite[Lemma 2.4]{BelHu-modr2} applies to show that $\a(u)\le 1$
so that $u\in \O^\g_{\ge 0}(\C)$.
\end{proof}

\section{Recognizing $\ell^2$-manifolds: smooth topology}
\label{sec: O smooth}

In this section we prove Theorems~\ref{thm: main compact}--\ref{thm: main C} for $\g=\infty$.
Let $M$ denote $\C$, $S^2$, or $RP^2$ equipped with a metric $g_\k$ of constant 
curvature $\k\in\{0,1\}$. 

That $\O_{\ge 0}^\infty(\C)$ is not locally compact is proved in~\cite[Lemma 2.5]{BelHu-modr2}
where the idea was to look at the point of $\C$ where the curvature of $e^{-2u}g_{_0}$ is positive
and note that arbitrary small perturbations near the point rule out local compactness. 
The same idea works for $\O_{\ge \l}^\infty(M)$ when $M$ is compact. Namely, if $u$
is a constant function on $M$ such that the sectional curvature of $e^{-2u}g_{_1}$
is $>\l$, then the proof of~\cite[Lemma 2.5]{BelHu-modr2} shows that $u$ has no compact neighborhood
in $\O_{\ge \l}^\infty(M)$.

By Lemma~\ref{lem: C GH closed} the subset
$\O^\infty_{\ge \l}(M)$ is closed in the  Fr\'echet space $C^\infty(M)$.
Like any closed convex non-locally-compact
subset of a  Fr\'echet space, 
$\O^\infty_{\ge \l}(M)$ is homeomorphic to $\ell^2$, see~\cite[Theorem 2]{DobTor}. 
Similarly, $D^\infty(M)$ is a contractible Fr\'echet manifold, hence it
is homeomorphic to $\ell^2$, see~\cite{EarSch, Yag},
and the Fr{\'e}chet space $\ell^2\times\ell^2$ is isomorphic to $\ell^2$. 
Thus $\mathcal R_{\ge \l}^\infty(M)$ is homeomorphic to $\ell^2$, where as usual
$\l=0$ for $M=\C$.

If $M$ is $S^2$ or $RP^2$, then $\O_{>\l}^\infty(M)$ is an open contractible subset
in the  Fr\'echet space $C^\infty(M)$, and hence it is homeomorphic to $\ell^2$.
It remains to prove the following.

\begin{thm}
\label{lem: O>0(C) is l2}
$\O_{>0}^\infty(\C)$ is homeomorphic to $\ell^2$. 
\end{thm}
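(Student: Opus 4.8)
The plan is to exhibit $\O^\infty_{>0}(\C)$ as a convex, completely metrizable, homotopy dense subspace of $\O^\infty_{\ge 0}(\C)$, and then to recognize it as an $\ell^2$-manifold. Since a convex subset of a locally convex linear space is an AR, hence contractible, once $\O^\infty_{>0}(\C)$ is known to be an $\ell^2$-manifold it is automatically homeomorphic to $\ell^2$. The main obstacle is that, unlike the compact case (where $\O^\infty_{>\l}(M)$ is \emph{open}) and unlike $\O^\infty_{\ge 0}(\C)$ itself (which is \emph{closed} by Lemma~\ref{lem: C GH closed}), the set $\O^\infty_{>0}(\C)$ is neither open nor closed in $C^\infty(\C)$, since the strict inequality $\triangle_{g_{_0}}u>0$ can degenerate at infinity. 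Thus neither the open-convex argument nor the closed-convex result of~\cite{DobTor} used earlier applies verbatim, and the heart of the matter is to establish complete metrizability in spite of non-closedness.

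First I would prove that $\O^\infty_{>0}(\C)$ is $G_\de$ in $C^\infty(\C)$, and hence Polish. Fixing an exhaustion of $\C$ by compact disks $D_j$, write
\[
\O^\infty_{>0}(\C)=\bigcap_{j}\{u\in\O^\infty_{\ge 0}(\C)\co \triangle_{g_{_0}}u>0 \text{ on } D_j\}.
\]
Because $u\mapsto\triangle_{g_{_0}}u$ is continuous into $C^0(\C)$ and each $D_j$ is compact, the map $u\mapsto\min_{D_j}\triangle_{g_{_0}}u$ is continuous, so every set on the right is open in $\O^\infty_{\ge 0}(\C)$. As $\O^\infty_{\ge 0}(\C)$ is closed in $C^\infty(\C)$ it is completely metrizable, whence $\O^\infty_{>0}(\C)$ is a $G_\de$ subset of a Polish space and is therefore Polish.

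Next I would establish homotopy density. The space $\O^\infty_{>0}(\C)$ is nonempty, as it contains, e.g., the conformal factor of a complete metric of positive curvature on $\C$. Fixing such a $u_0\in\O^\infty_{>0}(\C)$, define $h\co\O^\infty_{\ge 0}(\C)\times[0,1]\to\O^\infty_{\ge 0}(\C)$ by $h(u,t)=(1-t)u+tu_0$; this is continuous in the $C^\infty$ topology, well defined by convexity of $\O^\infty_{\ge 0}(\C)$, and satisfies $h(\cdot,0)=\mathrm{id}$. For $t\in(0,1]$ we have $\triangle_{g_{_0}}h(u,t)=(1-t)\triangle_{g_{_0}}u+t\,\triangle_{g_{_0}}u_0\ge t\,\triangle_{g_{_0}}u_0>0$ since $\triangle_{g_{_0}}u\ge 0$, so $h\bigl(\O^\infty_{\ge 0}(\C)\times(0,1]\bigr)\subset\O^\infty_{>0}(\C)$. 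Hence $\O^\infty_{>0}(\C)$ is homotopy dense in $\O^\infty_{\ge 0}(\C)$, which is homeomorphic to $\ell^2$ by the discussion preceding the theorem.

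Finally I would assemble the conclusion. Being homotopy dense in the $\ell^2$-manifold $\O^\infty_{\ge 0}(\C)$, the space $\O^\infty_{>0}(\C)$ is an ANR with SDAP by~\cite[Theorem 1.3.2]{BRZ-book}; together with complete metrizability, the characterization of $\ell^2$-manifolds (see~\cite{BRZ-book}) yields that $\O^\infty_{>0}(\C)$ is an $\ell^2$-manifold, and being contractible it is homeomorphic to $\ell^2$. Alternatively, one may observe that $\O^\infty_{>0}(\C)$ is a convex, Polish, non-locally-compact subset of a separable Fr\'echet space --- with non-local-compactness coming either from homotopy density or from the perturbation argument of~\cite[Lemma 2.5]{BelHu-modr2} --- and invoke the completely metrizable version of the convex classification of~\cite{DobTor}, which extends the closed-convex statement already used for $\O^\infty_{\ge 0}(\C)$. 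I expect the $G_\de$ step to be the genuinely new ingredient, the remainder being a direct application of the machinery assembled above.
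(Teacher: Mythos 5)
Your proposal is correct and follows essentially the same route as the paper: show $\O^\infty_{>0}(\C)$ is $G_\de$ (hence Polish) via the same exhaustion by compact disks, show it is homotopy dense in $\O^\infty_{\ge 0}(\C)\cong\ell^2$ via convex combinations with a fixed $u_0\in\O^\infty_{>0}(\C)$, and conclude by Toru\'nczyk's characterization of $\ell^2$. The only cosmetic difference is that you verify homotopy density by writing down the homotopy $h(u,t)=(1-t)u+tu_0$ explicitly, whereas the paper cites the general fact that a dense convex subset of a convex set in a linear metric space is homotopy dense.
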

\begin{proof} 
By Toru\'nczyk's characterization theorem, see~\cite[Theorem 1.1.14]{BRZ-book}, 
a Polish AR is homeomorphic to $\ell^2$
if and only if it has the Strong Discrete Approximation Property (SDAP).
Also a space is an AR with SDAP if and only if it is homeomorphic to a homotopy dense subset of
$\ell^2$~\cite[Theorem 1.3.2]{BRZ-book}.
Note that $\O^\infty_{>0}(\C)$ is dense
in $\O^\infty_{\ge 0}(\C)$: Any $u\in\O^\infty_{\ge 0}(\C)$ can be approximated by the convex combinations
$(1-t)u+tu_0$ which lie in $\O^\infty_{>0}(\C)$ for $t\in (0,1)$ provided $u_0\in \O_{>0}^\infty(\C)$.
If a convex subset of a linear metric space is dense in an AR, then it is homotopy dense
in that AR~\cite[exercises 12-13 in 1.2]{BRZ-book}. Thus $\O^\infty_{>0}(\C)$ is homotopy dense in 
$\O^\infty_{\ge 0}(\C)$.
Since $\O^\infty_{\ge 0}(\C)$ is homeomorphic to
$\ell^2$, it remains to check that $\O^\infty_{>0}(\C)$ is Polish.
Recall that a subspace of a Polish space is Polish if and only if it is $G_\delta$. 
To show that $\O^\infty_{>0}(\C)$ is $G_\delta$ note that\[
\O^\infty_{>0}(\C)=\bigcap_n\,\{u\in \O^\infty_{\ge 0}(\C)\co \triangle_{g_{_0}} u
\left\vert_{\{z\co |z|\le n\}}>0\}\right.,
\]
which is a countable intersection of open sets.
\end{proof}

\section{$\M_2$-absorbing diffeomorphism groups}
\label{sec: abs diff gr}

In this section $\b\in [1,\infty)$ and $M$ is $S^2$, $RP^2$ or $\C$.
The group $D^\b(M)$ was defined in Section~\ref{sec: unif M} for compact $M$
and in Section~\ref{sec: unif of C} for $M=\C$.
The results of this section combine to prove the following theorem.

\begin{thm}
$D^\b(M)$ is $\M_2$-absorbing for each $\b\in [1,\infty)$. 
\end{thm}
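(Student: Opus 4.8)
The plan is to verify the four defining conditions of an $\M_2$-absorbing space from Section~\ref{sec: absorb}, following the template of Theorem~\ref{thm: R is absorbing}: that $D^\b(M)$ (i)~is strongly $\M_2$-universal, (ii)~is $\s Z$, (iii)~is homeomorphic to a homotopy dense subset of an $\ell^2$-manifold, and (iv)~belongs to $\M_2$. I regard each of (i)--(iv) as a separate lemma of this section, and the theorem is their conjunction. It is worth stressing that none of (i)--(iv) uses contractibility of $D^\b(M)$; only the final step ``a contractible $\M_2$-absorbing space is homeomorphic to $\Si^\o$'' would, and contractibility of $D^\b(M)$ for integer $\b$ is not known (cf.\ the remark after Theorem~\ref{thm: unif}). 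This is why the present statement claims only the absorbing property.

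For the compact cases $M=S^2,RP^2$ all four conditions can be read off from the local linear model supplied by Corollary~\ref{cor: diff loc homeo to normed space}, which presents $D^\b(M)$ as an ANR locally homeomorphic to an open subset of the separable normed space $\X^\b(M)$ of smooth vector fields (with the prescribed vanishing conditions) in the $C^\b$ topology. For (iii) I would, as in Theorem~\ref{thm: R is absorbing}, take as ambient space the $C^\b$-closure $L$ of the smooth diffeomorphisms, an $\ell^2$-manifold; in each exponential chart $D^\b(M)$ corresponds to the dense convex set $\X^\b(M)$ inside its completion, so by Lemma~\ref{lem:conv-hdense} it is locally homotopy dense, and since homotopy density is a local property these charts realize $D^\b(M)$ as homotopy dense in $L$. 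Condition (iv) follows from Lemma~\ref{lem: O is in M2}, since as a set $D^\b(M)$ is the image of the $G_\de$ group $D^\infty(M)$ under the identity map into the $C^\b$ topology. Condition (ii) is the analogue of Lemma~\ref{lem: O is sigmaZ}, obtained by perturbing with vector fields supported in a fixed coordinate disk.

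The heart of the matter is strong $\M_2$-universality~(i). I would produce a \emph{closed} $\M_2$-universal subset of $D^\b(M)$ by pushing the universal function-space pair of Theorem~\ref{thm: criterion M_2-univer} into the group. Taking $N=M$, a coordinate disk $D$, and the operator $\mathfrak D$ equal to the inclusion $C^l(M)\to C^0(M)$ produces a pair $(C^l_\bu,C^\infty_\bu)$ to which Corollary~\ref{cor: Cbullet is M2-universal} applies; exponentiating $u\mapsto \exp(u\,W)$ against a fixed smooth frame $W$ supported in $D$ and respecting the fixed-point conditions gives a continuous injection $f\co C^l_\bu\to D^\b(M)$ (continuous because $l\ge\b$, injective because $u$ is recovered from the image near $D$). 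Then $f(C^\infty_\bu)$ is $\M_2$-universal, and a closedness argument in the spirit of Lemma~\ref{lem: R is M2-univ} shows it is closed in $D^\b(M)$; combined with~(iii), \cite[Proposition 5.3.5]{BRZ-book} upgrades universality to strong universality. Conjoining (i)--(iv) yields that $D^\b(M)$ is $\M_2$-absorbing for $M=S^2,RP^2$.

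The main obstacle is the plane $M=\C$. Here $D^\b(\C)$ is not open in $C^\b(\C,\C)$ and carries no global linear chart, so Corollary~\ref{cor: diff loc homeo to normed space} is unavailable and one must instead invoke the ANR statement of Lemma~\ref{lem: diff C is ANR}. I would still run the scheme above, with the $C^\b$-closure of the smooth diffeomorphisms fixing the chosen pair as the ambient $\ell^2$-manifold, but the delicate points are establishing homotopy density of $D^\b(\C)$ in it on a noncompact base and verifying closedness of the universal subset $f(C^\infty_\bu)$ built from functions supported in a disk; the noncompactness, together with the completeness and growth constraints that were central in Section~\ref{sec: unif of C}, is where the argument requires the most care. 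Once these are in place the exponential construction of~(i) and the conclusions (ii)--(iv) transfer verbatim, completing the proof for all three surfaces.
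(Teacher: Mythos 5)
Your overall architecture matches the paper's: the theorem is proved by verifying the four defining conditions of an $\M_2$-absorbing space one at a time, and your treatment of (iv) (push the $G_\de$ set $D^\infty(M)$ through the identity map and apply Lemma~\ref{lem: O is in M2}) and your observation that contractibility is not needed for the absorbing property itself are exactly right. However, there are two genuine gaps in the key step (i). First, the upgrade from ``contains a closed $\M_2$-universal subset'' to ``strongly $\M_2$-universal'' cannot be done via \cite[Proposition 5.3.5]{BRZ-book}: that proposition is about convex subsets of locally convex spaces and is what the paper uses for $\mathcal R^\g(V)$ and for $\O^\g_{\ge\l}(M)$, but $D^\b(M)$ is a topological group, not a convex set, and no global convex model is available (for $M=\C$ not even a local one). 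The paper instead invokes \cite[Theorem 4.2.3]{BRZ-book}, which upgrades universality to strong universality for infinite-dimensional ANR \emph{groups}; this is precisely where the group structure earns its keep, and without it your argument does not close. Second, your embedding $u\mapsto\exp(u\,W)$ of the universal pair into the group is not clearly injective: the time-one flow of a vector field does not determine the field (flows can be periodic, and $uW$ only sees $u$ where $W\neq0$), so ``$u$ is recovered from the image near $D$'' needs justification and may simply fail for a bad choice of $W$. The paper sidesteps this by taking $N=[0,1]$, $\mathfrak D u=u'$, $\eta=\tfrac12$, and sending $u$ to the radial diffeomorphism $re^{i\th}\mapsto u(r)e^{i\th}$ of a coordinate disk; monotonicity $u'\ge\tfrac12$ guarantees this is a diffeomorphism and $u$ is literally read off from the image, making injectivity and closedness transparent.

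Two smaller divergences are worth noting. For (ii) in the compact case the paper does not perturb at all: it observes that $D^\b(M)$ is locally homeomorphic to the space $V^\b$ of smooth vector fields with the $C^\b$ norm, which is an incomplete operator image and hence $\s Z$ by \cite[Proposition 3.6]{BDP}; the perturbation argument you sketch is what the paper is forced to do only for $M=\C$, and there the detail that the perturbed map remains a diffeomorphism (via extension to $\overline\C$ and openness of diffeomorphisms in the $C^1$ topology) is essential and nontrivial. For (iii), your proposed ambient space (the $C^\b$-closure of the smooth diffeomorphisms) is not obviously an $\ell^2$-manifold, since the closure in $C^\b(M,M)$ contains non-diffeomorphisms; the paper again routes through the group structure, citing \cite[Theorems 4.2.1 and 1.3.2]{BRZ-book} to the effect that any infinite-dimensional ANR group embeds as a homotopy dense subset of an $\ell^2$-manifold, with the ANR property itself coming from Corollary~\ref{cor: diff loc homeo to normed space} for compact $M$ and from Yagasaki's method for $\C$.
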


Recall that any contractible $\M_2$-absorbing space is homeomorphic to $\Si^\o$.
We noted in Section~\ref{sec: unif M} that $D^\b(S^2)$ and $D^\b(RP^2)$ are contractible for $\b\notin \Z$,
and $D^\b(S^2)$ is contractible for every integer $\b>1$.

\begin{lem} 
\label{lem: diff C is contractible}
$D^\b(\C)$ is contractible. 
\end{lem}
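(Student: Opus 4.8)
The plan is to exhibit an explicit contracting homotopy from the identity map of $D^\b(\C)$ to the constant map at $\id$, in two stages; since contractibility only requires a homotopy to a constant, there is no need to arrange a strong deformation retraction onto a subgroup. After conjugating by the complex-affine automorphism carrying the chosen pair of fixed points to $\{0,1\}$ — an orientation-preserving conformal map that induces an isomorphism of topological groups — I may assume every $\phi\in D^\b(\C)$ fixes $0$ and $1$.

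First I would zoom toward the fixed point $0$ while correcting by a conformal scaling so that $1$ stays put. For $s\in(0,1]$ set
\[
\Phi_s(\phi)(z)=\frac{\phi(sz)}{\phi(s)},
\]
complex division being legitimate since $\phi(s)\neq\phi(0)=0$ for $s\neq 0$. Each $\Phi_s(\phi)$ is $\phi$ precomposed with $z\mapsto sz$ and postcomposed with the conformal scaling $w\mapsto w/\phi(s)$, hence a smooth orientation-preserving diffeomorphism fixing $0$ and $1$, and $\Phi_1(\phi)=\phi$ because $\phi(1)=1$. Differentiating gives $D_z^k\Phi_s(\phi)(z)=\bigl(s^{k}/\phi(s)\bigr)D^k\phi(sz)$, so as $s\to0$ the first derivative tends to $D\phi(0)\big/\bigl(D\phi(0)[1]\bigr)$ while every higher derivative carries a vanishing factor $s^{k-1}$. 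Thus $\Phi_s(\phi)$ converges, in $C^\infty$ on compact sets and hence in $C^\b$, to the \emph{linear} map $B_\phi(z)=D\phi(0)[z]\big/D\phi(0)[1]$. Since $\phi$ is orientation-preserving, $D\phi(0)\in\mathrm{GL}^+(2,\R)$, so $B_\phi\in\mathrm{GL}^+(2,\R)$ with $B_\phi[1]=1$; setting $\Phi_0(\phi)=B_\phi$ extends the first stage continuously to $s\in[0,1]$, staying inside $D^\b(\C)$ throughout.

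In the second stage I contract the set $\mathcal L$ of orientation-preserving linear maps fixing $0$ and $1$. In the basis $\{1,i\}$ such a map has matrix $\left(\begin{smallmatrix}1&b\\0&d\end{smallmatrix}\right)$ with $b\in\R$, $d>0$, so $\mathcal L\cong\R\times(0,\infty)$ is convex and the straight-line homotopy $t\mapsto(1-t)B+t\,\id$ remains in $\mathcal L$ (its $(2,2)$-entry $(1-t)d+t$ is positive) and ends at $\id$. Concatenating the reparametrized first stage, with $s=1-2t$ for $t\in[0,\tfrac12]$, with this second stage for $t\in[\tfrac12,1]$ (matching at $t=\tfrac12$, where both give $B_\phi$) yields a homotopy from the identity of $D^\b(\C)$ to the constant map $\id$, proving contractibility.

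The main obstacle is the joint $C^\b$-continuity of $(\phi,s)\mapsto\Phi_s(\phi)$, in particular continuity as $s\to0$. For this I would use $\phi(0)=0$ to write $\phi(sz)/s=\int_0^1 D\phi(\theta sz)[z]\,d\theta$ and invoke that $C^\b$-convergence with $\b\ge1$ gives uniform convergence of $\phi$ and $D\phi$ on compact sets, yielding uniform first-order Taylor control; the explicit scaling $D_z^k\Phi_s(\phi)=(s^{k}/\phi(s))\,D^k\phi(s\,\cdot)$ then handles the higher derivatives and, when $\b\notin\Z$, the top-order Hölder seminorm, which carries a positive power of $s$ and hence tends to $0$. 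Everything else — that each $\Phi_s(\phi)$ is a diffeomorphism fixing $0$ and $1$, and that $\phi(s)$ stays bounded away from $0$ on $[0,1]$ (its continuous extension $\phi(s)/s\to D\phi(0)[1]\neq0$ is nonvanishing) — is routine.
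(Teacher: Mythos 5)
Your argument is correct, and it shares its analytic engine with the paper's proof --- the Alexander-type zooming homotopy toward the fixed point $0$ --- but it is organized quite differently. The paper does not contract $D^\b(\C)$ directly: it first establishes the product decompositions $\Diff^+(\C)\cong \Conf^+(\C)\times D^\b_*(\C)\cong \Conf^+(\C)\times D^\b(\C)$, where $D^\b_*(\C)$ is the group of diffeomorphisms fixing $0$ with differential the identity there; it then contracts $D^\b_*(\C)$ by the homotopy $H_t(f)(v)=f(tv)/t$ (which is your first stage in its simplest form, since the normalization of the differential makes the limit exactly $\id$ and no second stage is needed), concludes that $\Conf^+(\C)\hookrightarrow\Diff^+(\C)$ is a homotopy equivalence, and deduces contractibility of $D^\b(\C)$ from the slice-inclusion/projection bookkeeping in the other product decomposition. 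You instead contract $D^\b(\C)$ itself, which forces two complications the paper avoids: the normalizing division by the complex number $\phi(s)$ rather than by the scalar $s$ (so as to keep $1$ fixed), and the residual second stage contracting the convex set of orientation-preserving linear maps fixing $1$. What your version buys is self-containedness and an explicit contraction; what the paper's version buys is that the delicate continuity analysis is confined to the cleanest possible subgroup. In both cases the real content is the joint $C^\b$-continuity of the zooming homotopy down to $s=0$, including the top-order H\"older seminorm when $\b\notin\Z$; your estimate $\|D^k_z\Phi_s(\phi)\|=s^{k}|D^k\phi(s\,\cdot)|/|\phi(s)|$ with $\phi(s)/s$ bounded away from $0$ locally uniformly in $\phi$ handles this correctly, and the extra factor $s^{\b-1}>s^0$ on the H\"older seminorm for $k\ge 1$ does tend to $0$ as you claim.
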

\begin{proof}
Denote by $\Diff^+(\C)$ the group of all smooth orientation-preserving
diffeomorphisms of $\C$; we equip its subgroups with the $C^{\b}$ topology.
Let $\Conf^+(\C)$ denote the subgroup of conformal automorphisms of $\C$, and 
let $D^\b_*(\C)$ be the subgroup of diffeomorphisms 
of $\C$ that fix $0$ and whose differential at $0$ is the identity.
Again $D^{\b}_*(\C)$ is a topological group.
The subgroups $D^{\b}_*(\C)$ and $D^{\b}(\C)$ are closed in 
$\Diff^+(\C)$ and intersects $\Conf^+(\C)$ 
trivially. Any element of $\Diff^+(\C)$ can be written uniquely
as the product of an element of $\Conf^+(\C)$ with
an element of either subgroup, which defines a homeomorphism
of  $\Diff^+(\C)$ to the product of $\Conf^+(\C)$ with either subgroup.  
The group $D^\b_*(\C)$ are contractible via the homotopy $H_t(f)(v)=\frac{f(tv)}{t}$ for $t\in (0,1]$
and $H_0(f)(v)=f_*(0)(v)$. Hence the inclusion $\Conf^+(\C)\to \Diff^+(\C)$
is a homotopy equivalence, which implies 
the contractibility of $D^\b(\C)$. Indeed,
the slice inclusion $D^{\b}(\C)\to D^{\b}(\C)\times \Conf^+(\C)$ followed
by the projection to the first factor is the identity,
while the composition of the slice inclusion with the projection
on the second factor, which is a homotopy equivalence, is a constant map.
\end{proof}

\begin{lem}
$D^{\b}(M)$ is in $\M_2$.
\end{lem}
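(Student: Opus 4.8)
The plan is to realize $D^\b(M)$ as the image of a $G_\de$ set under the topology-changing identity map and then to invoke Lemma~\ref{lem: O is in M2}. First I would fix a smooth embedding $i\co M\to\R^n$ onto a closed submanifold; when $M=\C=\R^2$ one may simply take $n=2$ and $i=\id$. Post-composition with $i$ identifies the set of normalized smooth diffeomorphisms underlying $D^\b(M)$ with a subset of $\G^\infty(M,\R^n)$, and by the definition of the $C^\g$ topology on $C^\g(M,M)$ recalled in Section~\ref{sec: Cgamma} the $C^\g$ topology on this subset coincides with the one it inherits from $\G^\g(M,\R^n)$. Since the underlying set is the same in every topology, I write $D^\infty(M)$ for it viewed inside $\G^\infty(M,\R^n)$ and $D^\b(M)$ for it viewed inside $\G^\b(M,\R^n)$.

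The crucial step is to check that $D^\infty(M)$ is $G_\de$ in $\G^\infty(M,\R^n)$. The space $\G^\infty(M,\R^n)$ is a separable Fr{\'e}chet space, hence Polish, and a subspace of a Polish space is Polish if and only if it is $G_\de$; so it suffices that $D^\infty(M)$ be Polish. This is already available: $D^\infty(M)$ is homeomorphic to $\ell^2$ --- by Corollary~\ref{cor: diff loc homeo to normed space} when $M$ is $S^2$ or $RP^2$, and by the theorem of Yagasaki~\cite{Yag} when $M=\C$ --- and $\ell^2$ is completely metrizable and separable. Hence $D^\infty(M)$ is $G_\de$ in $\G^\infty(M,\R^n)$.

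Finally I would apply Lemma~\ref{lem: O is in M2} with $V=M$ and $\g=\b$: the identity map $\id\co\G^\infty(M,\R^n)\to\G^\b(M,\R^n)$ sends the $G_\de$ set $D^\infty(M)$ to a space in $\M_2$. Because $\id$ is the identity on underlying sets and the subspace topology on its image is exactly the $C^\b$ topology, this image is precisely $D^\b(M)$, whence $D^\b(M)\in\M_2$. I do not expect any serious obstacle here: the substantive inputs --- the identification $D^\infty(M)\cong\ell^2$ and Lemma~\ref{lem: O is in M2} --- are already in place, and the only point needing a moment's care is the routine verification that the subspace topology from $\G^\g(M,\R^n)$ agrees with the $C^\g$ topology on the diffeomorphism group, which is immediate from the definitions in Section~\ref{sec: Cgamma}.
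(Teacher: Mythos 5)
Your proposal is correct and follows exactly the paper's argument: $D^{\infty}(M)$ is homeomorphic to $\ell^2$ (by Corollary~\ref{cor: diff loc homeo to normed space} for compact $M$ and by Yagasaki for $\C$), hence completely metrizable and therefore $G_\de$ in the Polish ambient space, so Lemma~\ref{lem: O is in M2} applies to the topology-changing identity map. The extra care you take in checking that the $C^\g$ topology on the diffeomorphism group agrees with the subspace topology from $\G^\g(M,\R^n)$ is implicit in the paper's setup from Section~\ref{sec: Cgamma} but is a worthwhile point to make explicit.
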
 
\begin{proof}
Since $D^{\infty}(M)$ is homeomorphic to $\ell^2$, 
see Corollary~\ref{cor: diff loc homeo to normed space} for compact $M$
and~\cite[Theorem 1.1]{Yag} for $M=\C$, the space  $D^{\infty}(M)$
is completely metrizable, and hence Lemma~\ref{lem: O is in M2} 
implies the claim.
\end{proof} 

\begin{lem}
$D^{\b}(M)$ is  $\s Z$.
\end{lem}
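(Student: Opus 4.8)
The plan is to follow the template of Lemma~\ref{lem: R is sigmaZ} and its model Lemma~\ref{lem: O is sigmaZ}, replacing the additive shift used there for spaces of functions and tensors by a multiplicative (composition) shift appropriate to the group $D^\b(M)$. Recall that $D^\b(M)$ is a separable ANR and a topological group (Corollary~\ref{cor: diff loc homeo to normed space} and Lemma~\ref{lem: diff C is ANR}), whose points are smooth diffeomorphisms topologized by $C^\b$. Fix a coordinate disk $D\subset M$ disjoint from the points (resp.\ tangent vector) used to normalize $D^\b(M)$, fix a real number $\a'$ with $\b-\lfloor\b\rfloor<\a'<1$, and for $\phi\in D^\b(M)$ let $\Phi(\phi)$ be the $\a'$-H\"older seminorm of the top-order derivative $D^{\lfloor\b\rfloor}\phi$ computed in the chart of $D$. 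Set $Z_m=\{\phi\in D^\b(M)\co\Phi(\phi)\le m\}$. I will show that $D^\b(M)=\bigcup_{m\ge 1}Z_m$ and that each $Z_m$ is a $Z$-set, which gives $\s Z$. A single disk $D$ suffices for all three choices of $M$; in particular no exhaustion of $\C$ is needed, since every $\phi$ has finite $\Phi$ on the fixed $D$ and the perturbation below will be supported near $D$.

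First I would check the covering and closedness. Each $\phi\in D^\b(M)$ is smooth, so $D^{\lfloor\b\rfloor}\phi$ is Lipschitz on $D$ and hence $\a'$-H\"older for $\a'<1$; thus $\Phi(\phi)<\infty$ and the sets $Z_m$ exhaust $D^\b(M)$. Convergence in $C^\b$ forces uniform convergence on $D$ of all derivatives of order at most $\lfloor\b\rfloor$, and $\Phi$ is a supremum of the functionals $\phi\mapsto |D^{\lfloor\b\rfloor}\phi(x)-D^{\lfloor\b\rfloor}\phi(y)|/|x-y|^{\a'}$, each of which is continuous under such convergence; being a supremum of lower semicontinuous functionals, $\Phi$ is lower semicontinuous in the $C^\b$ topology, so every $Z_m$ is closed in $D^\b(M)$.

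It remains to verify that each $Z_m$ is a $Z$-set, i.e.\ that every map $f\co Q\to D^\b(M)$ is uniformly approximable by a map whose range misses $Z_m$. I would construct, for a small amplitude $\e$ and a large frequency $R$, a smooth orientation-preserving diffeomorphism $\psi$ equal to the identity outside a disk $D'\supset D$ that is still disjoint from the normalization data, so that $\psi\in D^\b(M)$; the function $\psi$ is a high-frequency coordinate perturbation whose $C^\b$ norm is of size $\e R^{\lfloor\b\rfloor+(\b-\lfloor\b\rfloor)}$ while the H\"older seminorm $[D^{\lfloor\b\rfloor}\psi]_{\a'}$ is of size $\e R^{\lfloor\b\rfloor+\a'}$. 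Holding the $C^\b$ norm fixed and letting $R\to\infty$ forces $\e\to 0$ and $[D^{\lfloor\b\rfloor}\psi]_{\a'}\to\infty$, because $\a'>\b-\lfloor\b\rfloor$. Right-translating $f$ by $\psi$ gives $q\mapsto f(q)\circ\psi$, which is $C^\b$-close to $f$ since composition is continuous for $\b\ge 1$. By the chain rule the top derivative of $f(q)\circ\psi$ contains the leading term $(Df(q)\circ\psi)\cdot D^{\lfloor\b\rfloor}\psi$ together with terms that are products of derivatives of $f(q)$ and $\psi$ of order $\le\lfloor\b\rfloor$; as $f(Q)$ is compact the factor $Df(q)$ is bounded below by some $c>0$, and since the leading term carries the factor $\e$ while the remaining terms carry the factor $\e^2$, the leading term dominates for small $\e$. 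Thus $\Phi(f(q)\circ\psi)\ge c\,[D^{\lfloor\b\rfloor}\psi]_{\a'}-C$ with $c>0$, $C$ independent of $q$, and choosing $R$ large makes this exceed $m$ for every $q$, so $f(\cdot)\circ\psi$ misses $Z_m$. Hence $Z_m$ is a $Z$-set and $D^\b(M)$ is $\s Z$.

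The main obstacle is exactly this last uniform multiplicative pushing: transferring the high-frequency bump argument of the additive setting to composition in the diffeomorphism group and verifying, via the chain rule together with the compactness of $Q$, that a single $\psi$ drives $\Phi(\phi\circ\psi)$ above the threshold $m$ simultaneously for all $\phi$ in the compact family $f(Q)$ while keeping the composites $C^\b$-close to $f$. The delicate point within it is the bookkeeping of the lower-order chain-rule terms, which must be shown to be dominated by the leading term uniformly over $f(Q)$ once the amplitude is small and the frequency large.
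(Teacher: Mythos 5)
Your overall template (exhaust $D^\b(M)$ by closed sets $Z_m$ of bounded higher-order H\"older seminorm on a fixed disk, then show each $Z_m$ is a $Z$-set by perturbing an arbitrary $f\co Q\to D^\b(M)$) is in the spirit of the paper's argument for the function spaces $\O^\g_{\ge\l}(M)$, but the key step of your perturbation has a genuine gap. The estimate $\Phi(f(q)\circ\psi)\ge c\,[D^{\lfloor\b\rfloor}\psi]_{\a'}-C$ ``with $C$ independent of $q$'' is not justified: the lower-order chain-rule terms (already the term $D^{\lfloor\b\rfloor}$ applied to $Df(q)\circ\psi$, before any factor of $\e$ appears) are controlled only by norms of $f(q)$ of order strictly above $\b$ --- e.g.\ the $\a'$-H\"older seminorm of $D^{\lfloor\b\rfloor}f(q)$ with $\a'>\b-\lfloor\b\rfloor$, or the $C^{\lfloor\b\rfloor+1}$ norm. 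Each $f(q)$ is smooth, so these quantities are finite pointwise, but compactness of $f(Q)$ in the $C^\b$ topology gives no uniform bound on them over $q\in Q$ (a $C^\b$-convergent sequence of smooth diffeomorphisms can have $C^{\lfloor\b\rfloor+1}$ norms tending to infinity). The same lack of uniformity undermines your closeness claim: continuity of composition gives $f(q)\circ\psi\to f(q)$ for each fixed $q$, but your $\psi$ is kept at a fixed small $C^\b$-distance from the identity while $R\to\infty$, and making $\sup_q\|f(q)\circ\psi-f(q)\|_{C^\b}$ small requires an equicontinuity argument that again calls on uniform bounds above order $\b$. In the paper's analogous argument for $\O^\g_{\ge\l}(M)$ this is exactly why $f$ is first replaced by a partition-of-unity approximation $f_{\mathfrak u}$ landing in a finite-dimensional simplex of smooth functions (where all higher norms are uniformly bounded) before the perturbation is applied; that reduction is not available verbatim in the diffeomorphism group, since convex combinations of diffeomorphisms need not be diffeomorphisms.

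The paper's own proof avoids your difficulty by two different routes, neither of which you use. For $M=S^2$ or $RP^2$ it is soft: $D^\b(M)$ is locally homeomorphic to the space $V^\b$ of smooth vector fields with the $C^\b$ topology, which is an incomplete image of a continuous linear operator from a Fr\'echet space, hence $\s Z$ by \cite[Proposition 3.6]{BDP}; separability then globalizes the conclusion. For $M=\C$ it uses an \emph{additive} perturbation $\e\a_i+(1-\e)f(q)$, where the $\a_i$ are smooth approximations of a fixed $C^n\setminus C^{n+1}$ function $\a$, shows the result is still a diffeomorphism by extending to $\overline\C$ and using openness of diffeomorphisms in $C^1(\overline\C,\overline\C)$, and then proves that the $C^{n+2}$ norms blow up uniformly in $q$ by an Arzel\`a--Ascoli contradiction argument --- if they did not, a subsequence would converge in $C^{n+1}$ to $\e\a+(1-\e)f(q)$, which is not $C^{n+1}$. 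That contradiction argument is precisely what replaces the uniform quantitative lower bound you are missing. To salvage your approach you would need either such a compactness/contradiction mechanism or a preliminary reduction of $f$ to a family with uniformly bounded higher norms; as written, the proof does not go through.
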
 
\begin{proof}
First assume that $M$ is $S^2$ or $RP^2$.  Corollary~\ref{cor: diff loc homeo to normed space}
shows that $D^{\b}(M)$ is locally homeomorphic to the linear space $V^\b$ of smooth vector fields
on $M$ equipped with the $C^{\b}$ topology. Hence $V^\b$ is the image of 
the inclusion $\iota$ from the Fr{\'e}chet space $V^\infty$ of smooth vector fields on $M$ to the Banach space
of $C^\b$ vector fields on $M$. Since $\b$ is finite, $V^\b$ is incomplete. 
As noted in~\cite[Proposition 3.6]{BDP} any incomplete operator image (i.e. the image of
a continuous linear operator between Fr{\'e}chet spaces) is $\s Z$. More precisely,
there is an open convex neighborhood $U$ of $0$ in $V^\infty$ such that closure $\overline{\iota(U)}$
of $\iota(U)$ in $V^\b$ is a $Z$-set and so is $n\cdot\overline{\iota(U)}$ for every $n\in \N$.
Thus $V^\b$ is a countable union of $Z$-sets $n\cdot\overline{\iota (U)}$. 
Moreover, one can choose
$U$ so that $\overline{\iota(U)}$ is bounded which implies that the homeomorphism of $V^\b$ onto 
an open subset of $D^\b(M)$ takes each $n\cdot\overline{\iota (U)}$ to a closed subset. 
Since $D^\b(M)$ is separable, it is covered by countably many open sets homeomorphic to $V^\b$,
hence $D^\b(M)$ is $\s Z$.

Let $M=\C$, and let $D$ be a closed disk embedded in $\C$.
Fix an integer $n>\b$, set $r=n+3$, and
let $Z_m$ be the subspace of all $z\in D^{\b}(\C)$ such that 
for every $\r\in (\b, r)$ the $C^\r$ norm of 
$z\vert_{_{D}}$ is at most $m$. 

As in Lemma~\ref{lem: O is sigmaZ} below we see that $D^{\b}(\C)=\bigcup_{m>0} Z_m$,
and each $Z_m$ is closed in $D^{\b}(\C)$.
To show that $Z_m$ is a $Z$-set it remains to check that 
any continuous map $f$ from the Hilbert cube $Q$ into $D^{\b}(\C)$
is a uniform limit of maps whose ranges miss $Z_m$.

For $X\in\{\C, \overline\C\}$ let $C^{\b}_D(X,X)$ be the subspace of $C^{\b}(X,X)$
consisting of maps that are the identity outside $D$. Here $\overline\C=\C\cup\{\infty\}$
is the Riemann sphere. 
The obvious extension map $C^{\b}_D(\C,\C)\to C^{\b}_D(\overline\C,\overline\C)$ is a homeomorphism.
Given $h\in C^{\b}_D(\C,\C)$ we denote its extension to $\overline\C$ by $\overline h$. 
Clearly $\overline h$ is a $C^\b$ diffeomorphism if and only if so is $h$. 

For a function $p\in C^\infty(\C)$ with support in $D$ 
we set $f_p(q,t)=tp+(1-t)f(q)$ where $q\in Q$ and $t\in J=[0,1)$.
The associated map $f_p\co Q\times J\to C^{\b}(\C,\C)$ is clearly continuous.
We are going to show that for a suitable $p$ and small enough $t$ the map $f_p(\cdot, t)$
takes values in $D^\b(\C)\setminus Z_m$ and approximates $f$.

The map $f_0(q,t)=(1-t)f(q)$ lies in $D^\infty(\C)$ as the composite of $f(q)$ and the scaling by $1-t$.
Set $h_p(q,t)=f_0(q,t)^{-1}\circ f_p(q,t)$; note that $h_p(q,t)$ is
the identity outside $D$ for every $q$, $t$. Thus each $h_p$ gives rises to a continuous map 
$\overline h_p\co Q\times J\to C^{\b}_D(\overline\C,\overline\C)$.

Fix a real-valued function $\a\in C^{n}(\C)$ such that $\a$ is not $C^{n+1}$ and 
the support of $\a$ is in $D$. Then the map $f_{\a}\co Q\times I\to C^{\b}(\C,\C)$ is continuous, 
and $f_\a(q,t)$ is not $C^{n+1}$ for each $t>0$.
Note that $\overline h_\a(q,0)$ is the identity for every $q$. Recall that 
\begin{itemize}
\item[(i)]
if $L$ is a compact manifold without boundary, then
$C^1$ diffeomorphisms of $L$ form an open subset in $C^1(L,L)$~\cite[Theorem 1.7 in Chapter 2]{Hir};
\vspace{3pt}
\item[(ii)] 
a $C^l$ map with $l\ge 1$ is a $C^l$-diffeomorphism if and only if it is a $C^1$-diffeomorphism
(by the inverse function theorem).
\end{itemize}
Since $n>\b\ge 1$ and $Q$, $\overline \C$  are compact, (i)--(ii) imply that there is an $\e>0$ such that  
$\overline h_{\a}(\cdot,t)$ takes values in $D^{n}(\overline\C)$ for all $t\in (0,2\e)$. 

Next let us approximate $\a$ in the $C^n$ topology by a sequence of functions $\a_i\in C^\infty(\C)$ 
with supports in $D$.
Since $\overline h_{\a}(q,\e)\in D^{n}(\overline\C)$ and $\overline h_{\a_i}(q,\e)$ is $C^\infty$, (i)--(ii) again
imply that $\overline h_{\a_i}(q,\e)\in D^{\infty}(\overline\C)$ for all large $i$.
Restricting $\overline h_{\a_i}(q,\e)$ to $\C$ and post-composing the restriction with $f_0(q,t)$
gives $f_{\a_i}(q,\e)\in D^\infty(\C)$ for all large $i$.

By the Arzel{\`a}-Ascoli theorem the $C^{n+2}$ norm of $f_{\a_i}(q,\e)\vert_{_{D}}$ tends to infinity uniformly in $q$
(else some sequence $f_{\a_i}(q_i,\e)\vert_{_{D}}$ would converge to a $C^{n+1}$ map
of the form $f_{\a}(q,\e)\vert_{_{D}}$ contradicting the choice of $\a$). In particular,
the range of $f_{\a_i}(\cdot,\e)$ misses $Z_m$ for all large $i$.
Finally, $f_{\a_i}(\cdot,\e)$ is an approximation of $f$ because $f-f_{\a_i}(\cdot,\e)=\e(f-\a_i)$
and the $C^\b$ norm of $f-\a_i$ is uniformly bounded as $\a_i$ converges to $\a$ in the $C^n$ topology and
$n>\b$.
\end{proof}

\begin{lem}
\label{lem: diff C is ANR}
$D^{\b}(M)$ is homeomorphic to
a homotopy dense subset of $\ell^2$, and in particular it is an ANR.
\end{lem}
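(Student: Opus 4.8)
The plan is to embed $D^\b(M)$ as a homotopy dense subset of an $\ell^2$-manifold and then upgrade the target to $\ell^2$ using contractibility. The natural ambient manifold is the group $L$ of little-H\"older diffeomorphisms of $M$ (the $C^\b$-closure of the smooth ones, subject to the same normalization fixing the prescribed points or frame): $L$ is locally modeled on the separable infinite-dimensional Banach space of little-H\"older vector fields, which is homeomorphic to $\ell^2$, so $L$ is an $\ell^2$-manifold. Since $D^\b(M)$ and $L$ are topological groups and the inclusion is a homomorphism, homogeneity reduces the problem to exhibiting a neighborhood of the identity of $D^\b(M)$ as a homotopy dense subset of the corresponding chart of $L$; the globalization is then automatic because homotopy density is a local property (Section~\ref{sec: absorb}). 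As a homotopy dense subset of an $\ell^2$-manifold, $D^\b(M)$ is in particular an ANR. (Alternatively one may verify the ANR and SDAP properties and invoke the recognition theorem of~\cite{BRZ-book}, but the route through $L$ is more direct.)

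For compact $M$ the local model is immediate from Corollary~\ref{cor: diff loc homeo to normed space}. It identifies a neighborhood of the identity of $D^\b(M)$ with an open subset of the normed space $\X^\b(M)$ of smooth vector fields carrying the $C^\b$ norm, while the matching chart of $L$ is an open subset of the Banach completion of $\X^\b(M)$, namely the little-H\"older vector fields. As $\X^\b(M)$ is a dense linear (hence convex) subspace of this separable Banach space, Lemma~\ref{lem:conv-hdense} shows it is homotopy dense; intersecting with the chart and invoking locality of homotopy density finishes the compact case.

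The case $M=\C$ is the main obstacle, since $D^\b(\C)$ is not open in $C^\b(\C,\C)$ and Corollary~\ref{cor: diff loc homeo to normed space} is unavailable. Here the plan is to adapt Yaginuma's construction~\cite{Yag} of the chart proving $D^\infty(\C)\cong\ell^2$. Using the splitting $\Diff^+(\C)\cong\Conf^+(\C)\times D^\b_*(\C)$ from the proof of Lemma~\ref{lem: diff C is contractible}, one reduces to a slice and builds a chart near the identity via a growth-controlled vector-field parametrization adapted to the noncompact domain. The delicate points are to choose the decay class of vector fields so that exponentiation is a homeomorphism onto a neighborhood in $D^\b(\C)$, to identify its Banach completion as the local model of $L$ with the correct behavior at infinity, and to check that the smooth fields form a dense convex subset of that completion, so that Lemma~\ref{lem:conv-hdense} again yields local homotopy density. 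This analytic chart over a noncompact base is the step I expect to require the most care.

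Finally I upgrade the $\ell^2$-manifold $L$ to $\ell^2$. The inclusion $D^\b(M)\hookrightarrow L$ is a homotopy equivalence because its image is homotopy dense, so $L$ is contractible whenever $D^\b(M)$ is; contractibility holds by Lemma~\ref{lem: diff C is contractible} for $M=\C$ and by Theorem~\ref{thm: unif} for $S^2$ and $RP^2$ when $\b\notin\Z$, which is the range used in the applications. A contractible $\ell^2$-manifold is homeomorphic to $\ell^2$, so $D^\b(M)$ is homotopy dense in $\ell^2$, as claimed. For values of $\b$ where contractibility has not been verified one still obtains that $D^\b(M)$ is homotopy dense in the $\ell^2$-manifold $L$, which is all that the $\M_2$-absorbing conclusion of this section requires.
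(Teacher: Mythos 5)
Your compact case is essentially sound, though it takes a different route from the paper: you embed $D^{\b}(M)$ into the little-H\"older diffeomorphism group $L$ and get homotopy density from Lemma~\ref{lem:conv-hdense} applied to the dense linear subspace of smooth vector fields, whereas the paper never introduces $L$ at all --- it notes that $D^{\b}(M)$ is an ANR by Corollary~\ref{cor: diff loc homeo to normed space} and then quotes the general theorem that every infinite-dimensional ANR topological group is homeomorphic to a homotopy dense subset of an $\ell^2$-manifold (\cite[Theorems 4.2.1 and 1.3.2]{BRZ-book}), with contractibility upgrading the manifold to $\ell^2$. Your globalization step needs the small extra remark that translates of the identity chart by elements of the \emph{dense} subgroup $D^{\b}(M)$ already cover $L$, since left translation by a non-smooth element does not preserve $D^{\b}(M)$; with that said, the argument goes through, at the price of verifying that $L$ is a topological group and an $\ell^2$-manifold in the little-H\"older category.

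The genuine gap is the case $M=\C$, which is the entire content of the lemma and which your proposal defers to an unexecuted ``growth-controlled vector-field parametrization.'' That strategy is precisely the one the paper points out does not work: in the $C^{\b}$ topology of uniform convergence on compact sets, a neighborhood of the identity in $D^{\b}(\C)$ contains diffeomorphisms that are arbitrarily far from the identity outside a compact set, so $D^{\b}(\C)$ is not open in $C^{\b}(\C,\C)$ and no exponential chart modeled on a normed space of vector fields --- whatever decay class you impose --- can be a homeomorphism onto a neighborhood of the identity. Choosing a decay class strong enough for exponentiation to land in the diffeomorphism group makes the image too small to be open; dropping the decay destroys the Banach structure. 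The paper's actual argument is not a chart construction at all: it verifies Yagasaki's Assumption (A), an ANR criterion assembled from the subgroups $G_{M_i}$ (contractible by the Alexander trick toward infinity) and $G_{\C\setminus M_i}$ (these \emph{are} locally homeomorphic to normed spaces, by the proof of Corollary~\ref{cor: diff loc homeo to normed space}, because the support is compact), glued by a parametrized isotopy extension theorem adapted to the H\"older setting; only then does it invoke the ANR-group theorem and contractibility (Lemma~\ref{lem: diff C is contractible}) to land in $\ell^2$. Until you supply an argument of this local-to-global type for $\C$, the proof is incomplete where it matters most.
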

\begin{proof}
If $M$ is $S^2$ or $RP^2$, then Corollary~\ref{cor: diff loc homeo to normed space}
implies that $D^{\b}(M)$ is an ANR.
The fact that $D^{\b}(\C)$ is an ANR is proved
using a method of T.~Yagasaki~\cite{Yag}. To conform 
to notations in~\cite{Yag} denote $D^{\b}(\C)$ by $G$,
and let $G_K$ denote the subgroup fixing a subset $K$ pointwise.
Fix an exhaustion of $\C$ by closed round disks $M_i$ centered at $0$ of radius $i>0$.
Then $G_{M_i}$ is contractible by the Alexander trick towards infinity.
Now~\cite[Theorem 3.1(2)(i)]{Yag} gives a sufficient condition, 
called Assumption (A) in~\cite[p.8]{Yag}, under which 
the identity component of $G$ is an ANR.
Since $G$ is path-connected by Lemma~\ref{lem: diff C is contractible}, 
it remains to check Assumption (A)
which is somewhat technical to state but the two main points  
are  as follows. The group $G_{\C\setminus M_i}$
is an ANR because it is locally homeomorphic to a normed space
by the proof of Corollary~\ref{cor: diff loc homeo to normed space}. 
The (one paragraph) proof in~\cite{Lim} of the parametrized isotopy extension theorem
extends to the setting of smooth maps with H\"older $C^{\b}$ topology.

Finally, by~\cite[Theorem 4.2.1 and Theorem 1.3.2]{BRZ-book} any infinite-dimensional
ANR-group is homeomorphic to
a homotopy dense subset of an $\ell^2$-manifold,
which in our case is $\ell^2$ because $D^{\b}(M)$ 
is contractible.
\end{proof}

\begin{lem}
$D^{\b}(M)$ is strongly $\M_2$-universal.
\end{lem}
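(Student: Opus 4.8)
The plan is to follow the template of Theorem~\ref{thm: R is absorbing}: exhibit a closed $\M_2$-universal subset of $D^\b(M)$ by transporting the function-space machinery of Sections~\ref{sec: aux function space}--\ref{sec: M2-univ operator D} into the group through an explicit continuous injection, and then upgrade to strong $\M_2$-universality by combining this with the homotopy density supplied by Lemma~\ref{lem: diff C is ANR}. Since $D^\b(M)$ is homotopy dense in $\ell^2$ and $\s Z$, once a closed (in $D^\b(M)$) $\M_2$-universal subset is produced, \cite[Proposition 5.3.5]{BRZ-book} applies as in the proof of Theorem~\ref{thm: R is absorbing} to give strong $\M_2$-universality. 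So the real content is the construction of that subset.

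To build it, first fix a coordinate chart on $M$ whose image contains the closed unit disk and whose domain is disjoint from the fixed data defining $D^\b(M)$ (the triple of points for $S^2$, the unit tangent vector for $RP^2$, the pair of points for $\C$); let $D$ be the disk. I would apply Theorem~\ref{thm: criterion M_2-univer} with $N=M$, this $D$, the operator $\mathfrak D=\d_{x_1}$ (cut off to a global continuous linear map $C^l(M)\to C^0(M)$ equal to $\d_{x_1}$ on $D$), $\eta=-\tfrac12$, and $h_\bu=0$; since $\mathfrak D h_\bu\vert_D=0>\eta$ the hypotheses hold, so the resulting pair $(C^n_\bu,C^\infty_\bu)$ satisfies the assumptions of Theorem~\ref{thm: bullet M2 univer}. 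Concretely $C^l_\bu$ consists of $C^l$ functions $w$ supported in $\Int D$ with $\d_{x_1}w\ge-\tfrac12$ on $D$. Define $f\co C^n_\bu\to D^\b(M)$ by letting $f(w)$ be the shear $\Phi_w(x_1,x_2)=(x_1+w(x_1,x_2),\,x_2)$ in the chart, extended by the identity. The point of the constraint is that $\d_{x_1}w\ge-\tfrac12>-1$ forces the Jacobian $1+\d_{x_1}w$ to be positive, so $\Phi_w$ is an orientation-preserving diffeomorphism equal to the identity off $D$ (hence fixing the data), and it is smooth whenever $w$ is; thus $f$ lands in $D^\b(M)$. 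The map $f$ is injective because $w$ is recovered as the first coordinate of the displacement, and continuous since $w\mapsto\Phi_w$ is essentially affine and $n\ge\b$. Corollary~\ref{cor: Cbullet is M2-universal} then shows $f(C^\infty_\bu)$ is $\M_2$-universal, exactly as in Lemma~\ref{lem: R is M2-univ}.

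It remains to see that $f(C^\infty_\bu)$ is closed in $D^\b(M)$ and then to invoke \cite[Proposition 5.3.5]{BRZ-book}. If $\Phi_{w_i}\to\phi$ in the $C^\b$ topology with $w_i\in C^\infty_\bu$ and $\phi\in D^\b(M)$, then $w_i$ converges in $C^\b$ to the displacement $w=(\phi)_1-x_1$, which is supported in $D$; because $\phi$ is a smooth element of $D^\b(M)$ the function $w$ is smooth, and since $\b\ge1$ the bound $\d_{x_1}w_i\ge-\tfrac12$ passes to the limit, so $w\in C^\infty_\bu$ and $\phi=f(w)$. I expect the construction and these verifications to go through uniformly for $M=S^2$, $RP^2$, and $\C$, because only compactly supported diffeomorphisms of a disk disjoint from the fixed data are used, and such maps lie in $D^\b(\C)$ as well. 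The main obstacle is precisely this interface between the linear machinery and the non-linear group: one must choose the constraint in Theorem~\ref{thm: criterion M_2-univer} so that it simultaneously makes every $\Phi_w$ a genuine diffeomorphism and is preserved under $C^\b$ limits, which is what the choice $\mathfrak D=\d_{x_1}$, $\eta=-\tfrac12$ achieves; matching the closedness statement to the exact hypotheses of \cite[Proposition 5.3.5]{BRZ-book}, as was done for $\mathcal R^\g(V)$, is then routine.
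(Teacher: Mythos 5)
Your construction of a closed $\M_2$-universal subset of $D^\b(M)$ is sound and is a legitimate variant of what the paper does: you build shears $\Phi_w(x_1,x_2)=(x_1+w(x_1,x_2),x_2)$ from functions $w$ constrained by $\d_{x_1}w\ge-\tfrac12$, whereas the paper applies Theorem~\ref{thm: criterion M_2-univer} with $N=[0,1]$, $h_\bu(x)=x$, $\mathfrak Du=u'$, $\eta=\tfrac12$ and sends $u$ to the radial diffeomorphism $re^{i\th}\mapsto u(r)e^{i\th}$ of a coordinate disk. Both fit the same template (a linear constraint making the displacement a diffeomorphism, injectivity, continuity from $n>\b$, closedness of the image via convergence of the displacement), and your verification of these points, including passing $\d_{x_1}w_i\ge-\tfrac12$ to the limit using $\b\ge1$, is correct.

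The genuine gap is in the final upgrade from $\M_2$-universality to \emph{strong} $\M_2$-universality. You invoke \cite[Proposition 5.3.5]{BRZ-book} ``as in the proof of Theorem~\ref{thm: R is absorbing}'', but that proposition is a statement about \emph{convex} subsets of locally convex linear metric spaces: in Theorem~\ref{thm: R is absorbing} it is applied to the convex set $\mathcal R^\g(V)$ sitting inside the linear space $\mathcal T^\g(V)$, with the universal subset closed in that ambient linear space, and the convexity is what lets one homogenize the universal set into arbitrary small pieces of the space. The diffeomorphism group $D^\b(M)$ is not convex and does not sit as a convex subset of anything, so the hypotheses of Proposition 5.3.5 simply fail and the step as written does not go through. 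The correct substitute is the topological-group analogue, \cite[Theorem 4.2.3]{BRZ-book}, which says that an infinite-dimensional ANR topological group containing a closed $\M_2$-universal subset is strongly $\M_2$-universal; here the group multiplication plays the homogenizing role that convex combinations play for convex sets. This is exactly how the paper concludes, using that $D^\b(M)$ is an ANR by Lemma~\ref{lem: diff C is ANR}. So your proof is repairable by swapping the citation, but as stated the key final inference rests on an inapplicable result.
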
 
\begin{proof}
As usual we plan to establish $\M_2$-universality by 
applying Theorem~\ref{thm: criterion M_2-univer} in which we let $N=[0,1]$,
$h_\bu(x)=x$, $\eta=\frac{1}{2}$, $D=\left[\frac{1}{2}, \frac{2}{3}\right]$, 
and $\mathfrak D u=u^\prime$.
For $n\ge 1$ let $C^n_\bu$ be the subspace of $C^n([0,1])$ consisting of functions
$u$ that equal $h_\bu$ outside $D$ and satisfy 
$u^\prime\ge\eta$ on $D$. By Theorem~\ref{thm: criterion M_2-univer}
the assumptions of Corollary~\ref{cor: Cbullet is M2-universal}
are satisfied. To apply the corollary fix $n>\b$ and construct a map 
$f\co C^n_\bu\to D^{\b}(M)$ as follows. Fix an closed coordinate disk $B$
in $M$, identify it with a unit disk in $\C$, and think of it in 
polar coordinates. Let $f(u)$ be the diffeomorphism  of $M$
that is the identity outside $B$, while on $B$ it takes
$re^{i\th}$ to $u(r)e^{i\th}$. 
Clearly $f$ is injective and its continuity follows from $n>\b$.
It is easy to check that the image of $f$ is closed.
Thus $D^{\b}(M)$ is $\M_2$-universal.
Clearly $D^{\b}(M)$ is infinite dimensional, and it is an ANR by
Lemma~\ref{lem: diff C is ANR}. Hence~\cite[Theorem 4.2.3]{BRZ-book} implies
strong $\M_2$-universality of $D^{\b}(M)$.
\end{proof}

\section{The spaces $\O^\g_{\ge\l}(M)$ and $\O^\g_{>\l}(M)$ are $\M_2$-absorbing}
\label{sec: O absorb}

Let $\g\in [0,\infty)$ and $M$ be $S^2$, $RP^2$ or $\C$ 
equipped with a complete metric $g_\k$ of constant curvature $\k\in\{0,1\}$.

In this section we adopt the following convention: 
whenever we write $\O^\g_{\ge\l}(\C)$ and $\O^\g_{>\l}(\C)$ we assume $\l=0$.
This will allow us to treat all surfaces at once.

\begin{lem} 
\label{lem: O is sigmaZ} 
If $\g$ is finite, then the following spaces are $\s Z$
\begin{itemize}
\item[(1)]
$\O^\g_{\ge\l}(S^2)$, $\O^\g_{>\l}(S^2)$, $\O^\g_{\ge\l}(RP^2)$, $\O^\g_{>\l}(RP^2)$ 
for any $\l\in\R$,\vspace{3pt}
\item[(2)]
$\O^\g_{\ge 0}(\C)$ and $\O^\g_{>0}(\C)$. 
\end{itemize}
\end{lem}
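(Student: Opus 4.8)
The plan is to realize each space as a countable union $\bigcup_{m\ge 1} Z_m$ of $Z$-sets. Throughout let $X$ denote any one of the listed spaces (with the convention $\l=0$ when $M=\C$), write $\g=k+\a$ with $k$ a nonnegative integer and $\a\in[0,1)$, and recall two facts: every element of $X$ is a \emph{smooth} function, even though $X$ carries the weaker $C^\g$ topology, and $X$ is convex by Lemma~\ref{lem: convexity of exp} together with its planar analogue. I fix a closed coordinate disk $\overline B\subset\Int M$ and set $N(u)=\|u\|_{C^{k+1}(\overline B)}$. Since each $u\in X$ is smooth, $N(u)<\infty$, so with $Z_m=\{u\in X:\ N(u)\le m\}$ one has $X=\bigcup_{m\ge1}Z_m$.

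First I would verify that each $Z_m$ is closed in $X$, i.e.\ that $N$ is lower semicontinuous for the $C^\g$ topology. If $u_i\to u$ in $X$ with $N(u_i)\le c$, then $C^\g$ convergence forces $C^k(\overline B)$ convergence; the derivatives $D^k u_i$ are uniformly Lipschitz with constant $\le c$, hence so is their uniform limit $D^k u$, and since $u$ is smooth this yields $|D^{k+1}u|\le c$ on $\overline B$, whence $N(u)\le c$. Thus $\{N\le m\}$ is closed.

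The substance is to show that every map $f\co Q\to X$ can be uniformly approximated by maps whose image misses $Z_m$, which I would do in two steps. In the first step, using compactness of $f(Q)$, convexity of $X$, and a partition of unity $(t_i)$ on $Q$ subordinate to a fine cover of $f(Q)$, I replace $f$ by a nearby map $\tilde f(q)=\sum_i t_i(q)u_i$ with $u_i\in X$; then $\tilde f(Q)\subset X$, and since $N$ is a seminorm, $\sup_q N(\tilde f(q))\le\max_i N(u_i)=:C<\infty$ (for the planar cases convexity also gives $\a(\tilde f(q))\le1$, so $\tilde f$ lands in $X$). In the second step I fix $w\in X$ with $N(w)$ large and push along the segment to $w$: for small $s>0$ put $g(q)=(1-s)\tilde f(q)+s\,w$, which lies in $X$ by convexity. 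Then the reverse triangle inequality gives $N(g(q))\ge s\,N(w)-(1-s)C$, while $\|g(q)-\tilde f(q)\|_{C^\g(D)}\le s\bigl(\|w\|_{C^\g(D)}+\sup_q\|\tilde f(q)\|_{C^\g(D)}\bigr)$ on every compactum $D$. Taking $w=w_j$ with $N(w_j)\to\infty$ and $s=1/\sqrt{N(w_j)}$ makes $N(g(q))>m$ for all $q$ while $g$ stays $C^\g$-close to $\tilde f$, hence to $f$; as $g(Q)\cap Z_m=\emptyset$ this shows $Z_m$ is a $Z$-set and therefore $X$ is $\s Z$.

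The one genuinely analytic input — and the main obstacle — is the existence, \emph{inside the constrained convex set} $X$, of functions $w_j$ with $\sup_j\|w_j\|_{C^\g(D)}<\infty$ on every compactum $D$ but $\|w_j\|_{C^{k+1}(\overline B)}\to\infty$. This is precisely the failure of $C^{k+1}$-boundedness on $C^\g$-bounded sets that drives the whole phenomenon, and it must be arranged without violating the curvature bound (and, for $\C$, completeness). I would build $w_j$ from a positive, highly concentrated Laplacian: take $\triangle_{g_\k} w_j$ to be a nonnegative bump of small radius $\r_j$, so the curvature is only \emph{increased} where the bump sits and the bound $\ge\l$ is preserved there; on compact $M$ this is compensated by a mild, widely spread negative part kept above $\l e^{-2w_j}-\k$, and on $\C$ one normalizes the total mass so the logarithmic growth rate $\a(w_j)$ stays $\le1$. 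A scaling estimate — which is where finiteness of $\g$ is used — shows that as $\r_j\to0$ the $C^\g(\overline B)$ norm stays bounded while $\|w_j\|_{C^{k+1}(\overline B)}\sim\r_j^{-1}\to\infty$. For the strict spaces $\O^\g_{>\l}$ I would run the identical argument with $w_j$ perturbing a strictly curved base, so that the convex combinations $g(q)$ retain strict curvature.
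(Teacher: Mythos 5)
Your overall architecture --- exhaust the space by norm-bounded sets $Z_m$, check each is closed, then show each is a $Z$-set by first approximating $f\co Q\to X$ through a partition of unity into a finite-dimensional convex piece and then pushing off towards something of large norm --- is the same as the paper's, and your closedness and partition-of-unity steps are correct. The substantive difference lies in two interacting choices. You define $Z_m$ by the $C^{k+1}(\overline B)$ norm (one derivative above $\g=k+\a$) and escape it by a \emph{convex combination} with a witness $w_j$ that must itself lie in the constrained set $X$. The paper instead defines $Z_m$ by $C^\r$ norms with $\r>\max\{\g,2\}$ and escapes by \emph{adding} a smooth function supported in the disk that is small in the $C^\g$ and $C^{2}$ norms but has large $C^\s$ norm for some $\s>\max\{\g,2\}$. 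Since the curvature operator $e^{2u}(\k+\triangle_{g_\k}u)$ is second order and the defining inequality of $\O^\g_{>\l}$ is open, $C^2$-smallness of an additive perturbation (together with compactness of $Q$) already keeps the image inside the space, and such a perturbation is produced by a trivially rescaled bump --- no constrained witness is needed. (Both proofs first push $\O^\g_{\ge\l}$ into $\O^\g_{>\l}$ by a convex combination with a fixed strictly curved function, so the open condition is available; your last paragraph does exactly what the paper does here.)

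The price of your choice is the step you yourself flag as the main obstacle, and this is where your argument is incomplete rather than wrong: you must actually produce $w_j\in X$ with $C^\g$ norms bounded on every compactum but $\|w_j\|_{C^{k+1}(\overline B)}\to\infty$. The difficulty is sharpest when $\g<2$, for then the norm measuring largeness has order at most that of the curvature constraint. Your sketch via Newtonian potentials of concentrated nonnegative Laplacian bumps can be made to work --- with $\triangle w_j=A_j\phi(\cdot/\r_j)$ and $A_j=\r_j^{\,\g-2}$ one gets $\|w_j\|_{C^\g}=O(1)$ and $\|w_j\|_{C^{k+1}(\overline B)}\sim\r_j^{\,\a-1}$, not $\r_j^{-1}$ as you claim --- but it requires genuine estimates for the Green's function of $(M,g_\k)$, the mean-zero compensation on compact $M$ (and its compatibility with curvature $\ge\l$ for $\l>0$), and preservation of the growth condition on $\C$, none of which are carried out. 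The paper's device of measuring largeness in norms of order strictly above both $\g$ and $2$ is precisely what makes all of this analysis unnecessary; if you keep your definition of $Z_m$, you must supply the potential-theoretic construction in full.
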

\begin{proof} 
We first give a proof for $\O^\g_{>\l}(M)$ and then indicate necessary modifications for 
$\O^\g_{\ge\l}(M)$.
Fix $r>\max\{\g, 2\}$, and let $D$ be a closed disk embedded in $M$. 
For a positive integer $m$ let $Z_m$ be the set of all functions $u\in \O^\g_{>\l}(M)$
such that $\|u\vert_{_D}\|_{_{C^\r(D)}}\le m$ for every $\r\in (\max\{\g,2\}, r)$.

The equality $\O^\g_{>\l}(M)=\bigcup_m Z_m$ follows from the facts that
any smooth function on $D$ has finite $C^r$ norm, and 
the Lipschitz constant of the identity map of $C^r(D)$,
where the domain and the co-domain are given the $C^r$ and $C^\r$ norms respectively,
is bounded above independently of $\r$, see~\cite[Lemma 6.35]{GilTru}. 

To show that $Z_m$ is closed consider $u_i\to u$ in $\O^\g_{>\l}(M)$ with $u_i\in Z_m$, and
fix $\r<\de<r$. Since $\{u_i\vert_{_D}\}$ is uniformly bounded in the $C^\de$ norm, 
$u_i\vert_{_D}$ subconverges in the $C^\r$ topology, see~\cite[Lemma 6.36]{GilTru}. 
The limit equals $u\vert_{_D}$
because $\r>\g$, and hence $\|u\vert_{_D}\|_{_{C^\r(D)}}\le m$.

To show that $Z_m$ is a $Z$-set we fix a continuous 
map $f\co Q\to \O^\g_{>\l}(M)$, where $Q$ is the Hilbert cube, 
and approximate it by a map whose range misses $Z_m$. 

For the first step of the approximation
consider a partition of unity $\mathfrak u=\{\psi_i, U_i\}$, $1\le i\le k$ on $Q$ where $\psi_i$
is a continuous real-valued function on $Q$ with support in $U_i$. 
For each $i$ pick a point $q_i\in U_i$, and consider the function $f_{\mathfrak u}=\sum_{i=1}^k f(q_i)\psi_i$.
Thus $f_{\mathfrak u}$ takes values in the convex hull
of the functions $f(q_1),\dots, f(q_k)$, which is a finite dimensional convex subset of $\O^\g_{>\l}(M)$.
Let $\{p_j\}$ be a countable family of seminorms defining the topology on
the ambient vector space $\G^\g(M)$ with the associated metric $d=\sum_{j\ge 1}\frac{p_j}{p_j+1}2^{-j}$.
For any $\e>0$ there is a partition of unity as above such that for any $i$, $j$, and
$x,y\in f(U_i)$ we have $p_j(x,y)<\e$, and therefore $\displaystyle{\sup_{q\in Q} d(f(q),f_{\mathfrak u}(q))<\e}$.

For the second stage of the approximation
let $f_{\mathfrak u,\a}(q)=f_\mathfrak u(q)+\a$ where $\a$ is a real valued smooth function 
on $M$ supported in $D$ and such that 
$\|\a\vert_{_D}\|_{_{C^{\g}(D)}}$ and $\|\a\vert_{_D}\|_{_{C^{2}(D)}}$ are small.
If $M=\C$, then for each $q$ the functions $f_{{\mathfrak u},\a}(q)$ and $f_{\mathfrak u}(q)$ have the same growth at infinity
as $\a$ is compactly supported. 
Compactness of $Q$ and strictness of the inequality ``$>\l$'' makes
the image of $f_{{\mathfrak u},\a}$ lie in $\O^\g_{>\l}(M)$ provided
$\|\a\vert_{_D}\|_{_{C^{2}(D)}}$ is sufficiently small.

The map $q\to f_{\mathfrak u}(q)\vert_{_D}$ takes values in a finite dimensional vector subspace 
of $\G^\g(D)$, and is continuous with respect to the $C^\g$ norm on the co-domain,
and hence with respect to any norm, as they are all equivalent. In particular, compactness
of $Q$ implies that there is $R>0$ with $\|f_{\mathfrak u}(q)\vert_{_D}\|_{_{C^\r(D)}}\le R$
for each $q\in Q$ and every $\r\in (\max\{\g,2\},r)$.

We can also choose $\a$ so that $\|\a\vert_{_D}\|_{_{C^{\s}(D)}}>m+R$
for some $\s\in (\max\{\g,2\}, r)$. 
Then the image of $f_{\mathfrak u,\a}$ is disjoint from $Z_m$ for 
if $f_{\mathfrak u,\a}(q)\in Z_m$ for some $q$, then
\[
\|\a\vert_{_D}\|_{_{C^\s(D)}}=
\|f_{{\mathfrak u},\a}(q)\vert_{_D}-f_{\mathfrak u}(q)\vert_{_D}\|_{_{C^\s(D)}}\le m+R.
\]

To treat the case of $\O^\g_{\ge \l}(M)$ we need one more step in the approximation.
Fix any function $v\in\O^\g_{>\l}(M)$.
(If $M$ is compact, we can let $v$ be a sufficiently large constant
such that $e^{2v}>\l$, and if $M=\C$, then take $v$ be any function such that
$e^{-2v}g_{_{0}}$ has a complete metric of positive curvature).
Now any continuous map $f\co Q\to \O^\g_{\ge \l}(M)$
can be approximated by the map $q\to (1-t)f(q)+tv$ for small positive $t$,
and the image of the latter map lies in $\O^\g_{>\l}(M)$ 
(see the proof of Lemma~\ref{lem: convexity of exp} and note that the leftmost
inequality in the displayed formula applied to $(1-t)f(q)+tv$ 
is strict for $t\in (0,1)$ because $v\in\O^\g_{>\l}(M)$).
\end{proof}

\begin{lem} 
\label{lem: O is M_2}
Each space in the statement of Lemma~\ref{lem: O is sigmaZ} 
belongs to $\M_2$. 
\end{lem}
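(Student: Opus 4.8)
The plan is to reduce everything to Lemma~\ref{lem: O is in M2}. I will exhibit each listed space as the image under the identity map $\id\co\G^\infty(M)\to\G^\g(M)$ of a suitable $G_\de$ subset of the Fr\'echet space $\G^\infty(M)$, and then Lemma~\ref{lem: O is in M2}, applied with $n=1$, will place the image in $\M_2$. The point is that, as a set, each of $\O^\g_{\ge\l}(M)$ and $\O^\g_{>\l}(M)$ coincides with the corresponding set $\O^\infty_{\ge\l}(M)$, $\O^\infty_{>\l}(M)$ of smooth functions inside $\G^\infty(M)$; only the ambient topology differs. Thus the whole task reduces to verifying that each of these underlying sets is $G_\de$ in $\G^\infty(M)$.

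First I would dispatch the non-strict curvature bounds. By Lemma~\ref{lem: C GH closed} the set $\O^\infty_{\ge\l}(M)$ is closed in $\G^\infty(M)$ (recall the convention $\l=0$ when $M=\C$), and a closed subset of a metrizable space is $G_\de$. Next I would treat the strict bounds in the compact case. For $M$ equal to $S^2$ or $RP^2$ the sectional curvature $e^{2u}(1+\triangle_{g_{_1}}u)$ depends continuously on $u$ as a map $\G^\infty(M)\to C^0(M)$, since it factors through $C^2(M)$; because $M$ is compact, demanding that this curvature exceed $\l$ at every point is an open condition. Hence $\O^\infty_{>\l}(S^2)$ and $\O^\infty_{>\l}(RP^2)$ are open, and in particular $G_\de$, in $\G^\infty(M)$.

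The hard part is the remaining case $\O^\infty_{>0}(\C)$, where noncompactness destroys openness of the strict inequality. Here I would reuse the description from the proof of Theorem~\ref{lem: O>0(C) is l2},
\[
\O^\infty_{>0}(\C)=\bigcap_n\{u\in\O^\infty_{\ge 0}(\C)\co \triangle_{g_{_0}}u\vert_{\{z\co|z|\le n\}}>0\},
\]
which presents $\O^\infty_{>0}(\C)$ as a countable intersection of sets open in $\O^\infty_{\ge 0}(\C)$, hence $G_\de$ in $\O^\infty_{\ge 0}(\C)$. Since $\O^\infty_{\ge 0}(\C)$ is itself $G_\de$ (indeed closed) in $\G^\infty(\C)$ by the previous paragraph, and being $G_\de$ is transitive (a countable intersection of sets relatively open in a $G_\de$ subset is again $G_\de$ in the ambient space), it follows that $\O^\infty_{>0}(\C)$ is $G_\de$ in $\G^\infty(\C)$. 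With every relevant set shown to be $G_\de$, a single application of Lemma~\ref{lem: O is in M2} to each finishes the proof.
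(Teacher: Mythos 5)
Your proof is correct and follows essentially the same route as the paper: both reduce to Lemma~\ref{lem: O is in M2} by showing the underlying sets are $G_\de$ in $\G^\infty(M)$, using closedness for the non-strict bounds, openness for the strict bounds on compact $M$, and the countable-intersection description from the proof of Theorem~\ref{lem: O>0(C) is l2} for $\O^\infty_{>0}(\C)$. The only cosmetic difference is that you cite Lemma~\ref{lem: C GH closed} for closedness of $\O^\infty_{\ge 0}(\C)$ where the paper cites the corresponding lemma of an earlier reference.
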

\begin{proof}
By Lemma~\ref{lem: O is in M2} it suffices to show that the corresponding
spaces with $\g=\infty$ are completely metrizable.
A subspace of a complete metric space is completely metrizable
if and only if it is $G_\de$.
The latter is true for $\O^\infty_{>0}(\C)$ because it is $G_\de$
in the ambient Fr{\'e}chet spaces by
the proof of Theorem~\ref{lem: O>0(C) is l2}. If $M$ is $S^2$ or $RP^2$, then 
$\O^\infty_{\ge\l}(M)$ and $\O^\infty_{>\l}(M)$ are respectively
closed and open, and hence are $G_\de$.
Finally, $\O^\infty_{\ge 0}(\C)$ is closed by~\cite[Lemma 2.4]{BelHu-modr2}.
\end{proof}

\begin{lem}
\label{lem: O(M) is in M2-universal}
Each space in the statement of  Lemma~\ref{lem: O is sigmaZ} is $\M_2$-universal.
\end{lem}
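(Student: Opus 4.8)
The plan is to follow the template of Lemma~\ref{lem: R is M2-univ}: exhibit inside each of the spaces a \emph{closed} $\M_2$-universal subspace, and then conclude $\M_2$-universality of the ambient space exactly as there. The closed universal subspace will be produced by Corollary~\ref{cor: Cbullet is M2-universal}, fed with a sequence $(C^n_\bu,C^\infty_\bu)$ manufactured by Theorem~\ref{thm: criterion M_2-univer} and a continuous injective map $f$. Concretely I would take $N=M$, fix a coordinate disk $D\subset M$, a continuous linear operator $\mathfrak D$, a constant $\eta$, and a smooth $h_\bu$ with $\mathfrak D h_\bu\vert_D>\eta$; let $X$ be the space of \emph{$C^l$} functions $u$ on $M$ whose metric $e^{-2u}g_\k$ satisfies the relevant curvature bound, with the $C^\g$ topology; and let $f$ be the inclusion $C^n_\bu\hookrightarrow X$, which is continuous since $C^n\to C^\g$ is continuous for $n\ge\g$, and injective. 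As $f$ preserves smoothness, $f(C^\infty_\bu)=C^\infty_\bu=f(C^n_\bu)\cap\{\text{smooth}\}$, so Corollary~\ref{cor: Cbullet is M2-universal} makes $C^\infty_\bu$ an $\M_2$-universal subspace of $X$. It then remains to arrange $C^\infty_\bu\subset\O$ and to prove $C^\infty_\bu$ \emph{closed} in $\O$.

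The operator that makes this transparent is $\mathfrak D=\triangle_{g_\k}$, since the Gauss curvature of $e^{-2u}g_\k$ is $e^{2u}(\k+\triangle_{g_\k}u)$; this settles every case in which the curvature constraint is a \emph{linear} lower bound on $\triangle u$. For $M=\C$ the set $\O^\g_{\ge0}(\C)$ is cut out by $\triangle_{g_{_0}}u\ge0$ and $\a(u)\le1$: taking $\eta=0$ (resp. $\eta>0$ for $\O^\g_{>0}(\C)$) and $h_\bu$ subharmonic with $\a(h_\bu)\le1$ and $\triangle h_\bu\vert_D>\eta$, the condition $u=h_\bu$ off $\Int D$ forces $\a(u)=\a(h_\bu)\le1$, so $C^\infty_\bu\subset\O^\g_{\ge0}(\C)$. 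For compact $M$ with $\l\le0$ one has $e^{2u}>0$, so $\triangle u\ge\eta$ with $\eta=-1$ already yields curvature $\ge0\ge\l$ on $D$ (take $\eta\ge0$ for the strict spaces with $\l=0$), while off $D$ the bound is inherited from $h_\bu$. In all these cases $C^\infty_\bu=\{u\text{ smooth}:u=h_\bu\text{ off }\Int D,\ \triangle u\ge\eta\text{ on }D\}$, and this is closed in $\O$: the condition $u=h_\bu$ off $\Int D$ is $C^\g$-closed, and the one-sided Laplacian bound survives $C^\g$-limits because $C^\g$-convergence gives $C^0$- and hence distributional convergence of Laplacians, so $\triangle u\ge\eta$ is preserved; closedness of $\O$ itself is Lemma~\ref{lem: C GH closed}.

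The hard part will be the compact case $\l>0$, where the single one-sided condition of Theorem~\ref{thm: criterion M_2-univer} is genuinely insufficient. Because the curvature $e^{2u}(1+\triangle u)\ge\l>0$ couples $\triangle u$ to $e^{2u}$, no bound $\triangle u\ge\eta$ can guard it: a compactly supported subharmonic dip sends $e^{2u}\to0$ while $1+\triangle u$ stays bounded below, forcing the curvature under $\l$. Worse, Gauss--Bonnet fixes the total curvature, so one cannot raise the curvature everywhere on $D$ by a compactly supported perturbation; the curvature-$\ge\l$ region is bounded in the curvature-decreasing directions and contains no linear ray, ruling out the inclusion trick. The plan is instead to fix $h_\bu$ with curvature well above $\l$ near $D$ and to carry the universal structure in the jets of order $\ge3$, which do not affect the curvature, while guarding the curvature by keeping the $2$-jet of $u$ in a fixed curvature-positive band (a two-sided constraint obtained from the fixed boundary value $h_\bu$ together with a maximum-principle comparison). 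The subtle point is that naively confining the $2$-jet to a $C^2$-ball is fatal: for $\g<2$ a $C^2$-ball of functions supported in the compact disk $D$ is $C^\g$-\emph{compact} by Arzel\`a--Ascoli, hence contains no closed $\M_2$-universal set; the confinement must leave open the curvature-raising (log-type concentration) directions, which are visible in the $C^0$ and hence $C^\g$ topology.

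I would therefore implement the $\l>0$ case with a continuous injective $f$ whose image lies in $\O^\g_{\ge\l}(M)$ and realizes these higher-order and concentration directions, the main obstacle being to prove $f(C^\infty_\bu)$ \emph{closed} in $\O^\g_{\ge\l}(M)$ despite the mismatch between the $C^n$ topology on the domain and the coarser $C^\g$ topology on the target. The intended mechanism is properness: the uniform bound on the guarded part of the $2$-jet lets one extract from any convergent image sequence an Arzel\`a--Ascoli limit of the perturbations whose $2$-jet still lies in the band and whose higher-order data are recovered from the $C^\g$-limit and the smoothness of the limiting metric; combined with the $F_\s$-measurability and lower-semicontinuity arguments already used in Lemma~\ref{lem: O is in M2} and Lemma~\ref{lem: C GH closed}, this identifies the limit as a point of $f(C^\infty_\bu)$. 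Once a closed $\M_2$-universal subspace is in hand in every case, $\M_2$-universality of $\O^\g_{\ge\l}(M)$ and $\O^\g_{>\l}(M)$ follows as in Lemma~\ref{lem: R is M2-univ}.
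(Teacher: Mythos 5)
Your treatment of the cases where the curvature bound reduces to a linear one-sided condition on the Laplacian ($M=\C$ with $\l=0$, and compact $M$ with $\l\le 0$) is essentially the paper's argument and is fine; if anything, taking $\mathfrak D=\triangle_{g_\k}$ fits the literal hypotheses of Theorem~\ref{thm: criterion M_2-univer} (which asks for a \emph{linear} $\mathfrak D$) more cleanly than what the paper does. But the compact case with $\l>0$ is part of the statement (Lemma~\ref{lem: O is sigmaZ}(1) allows any $\l\in\R$), and there you have not given a proof: you describe an intended mechanism (carrying the universal structure in jets of order $\ge 3$, a two-sided band on the $2$-jet, a properness argument) and you yourself flag the closedness of $f(C^\infty_\bu)$ as an unresolved obstacle. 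That is a genuine gap, not a completed argument.

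Moreover, the diagnosis that drives you to this detour is mistaken. The one-sided template does handle $\l>0$: the paper takes $\mathfrak D u=e^{2u}(\k+\triangle_{g_\k}u)$, i.e.\ the curvature operator itself, fixes $h_\bu\in\O^\g_{>\l}(M)$, and chooses $\max\{0,\l\}<\eta<\min_D\mathfrak D h_\bu$. The set $\{u: \mathfrak D u\vert_D\ge\eta,\ u=h_\bu \text{ off } \Int\,D\}$ is still convex --- this is exactly the convexity-of-the-exponential computation of Lemma~\ref{lem: convexity of exp} applied with $\eta$ in place of $\l$ --- it is closed, and the density/gluing and $\s Z$ steps of Theorem~\ref{thm: criterion M_2-univer} go through verbatim since $\mathfrak D$ is still continuous from $C^2$ to $C^0$ and $h_\bu$ satisfies the strict inequality on $D$. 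Your Gauss--Bonnet and Arzel\`a--Ascoli worries do not apply: nothing requires raising the curvature everywhere or confining $u$ to a $C^2$-ball; the constraint is one-sided, $\triangle_{g_\k}u$ can be arbitrarily large on $D$ (compensated by $e^{-2u}$ small there), so $C^n_\bu$ is a noncompact infinite-dimensional closed convex set, and the $\M_2$-universality is produced by the smoothness/$C^n$ mismatch of Theorem~\ref{thm: bullet M2 univer}, not by the presence of linear rays. To repair your proof, replace the Laplacian by the curvature operator in the $\l>0$ case (checking convexity as above, since $\mathfrak D$ is no longer linear) and the rest of your argument, including the closedness of $C^\infty_\bu$ in $\G^\g(M)$, goes through as in your other cases.
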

\begin{proof} 
Recall that a space is $\M_2$-universal if it contains an $\M_2$-universal
closed subset, and to find one we are going to apply Corollary~\ref{cor: Cbullet is M2-universal} 
and Theorem~\ref{thm: criterion M_2-univer}. To satisfy the assumptions of 
Theorem~\ref{thm: criterion M_2-univer}
let $N=M$, $l=2$, $\mathfrak D u=e^{2u}(\k+\triangle_{g_\k})$,
fix a function $h_\bu\in\O^\g_{>\l}(M)$, fix a closed coordinate disk $D$ in $M$, and 
let $\eta$ be a constant such that $\max\{0,\l\}<\eta<\min\mathfrak D h_\bu\vert_D$.
With these data for $n\ge\max\{l,\g\}$ define $C^{n}_\bu$ as in Theorem~\ref{thm: criterion M_2-univer}, i.e.,
let $C^n_\bu$ denote the subspace of $C^n(M)$ of functions $u$ 
such that $\mathfrak D u\vert_D\ge \eta$ and $u\vert_{M\setminus \mathrm{Int}\,D}=h_\bu$. 
Corollary~\ref{cor: Cbullet is M2-universal} applied to the inclusion
of $C^n_\bu$ into $\G^\g(M)$ shows that $C_\bu^\infty=\bigcap_n C^n_\bu$
is $\M_2$-universal. Using the proof of Lemma~\ref{lem: C GH closed} we see that
that $C_\bu^\infty=\bigcap_n C^n_\bu$ is closed in $\G^\g(M)$, and
by construction $C_\bu^\infty$ lies on $\O^\g_{>\l}(M)$.
\end{proof}

\begin{thm}
\label{thm: O is absorbing}
Each space in the statement of  Lemma~\ref{lem: O is sigmaZ}
is $\M_2$-absorbing, and in particular, is homeomorphic to $\Si^\o$.  
\end{thm}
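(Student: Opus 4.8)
The plan is to verify, uniformly for all six spaces, the four defining properties (i)--(iv) of an $\M_2$-absorbing space from Section~\ref{sec: absorb}, and then to invoke the fact (already recalled in Section~\ref{sec: abs diff gr}) that a contractible $\M_2$-absorbing space is homeomorphic to $\Si^\o$. The overall structure mirrors the proof of Theorem~\ref{thm: R is absorbing} exactly. Write $\O$ for any of the spaces listed in Lemma~\ref{lem: O is sigmaZ}. First I record that $\O$ is convex, by Lemma~\ref{lem: convexity of exp} when $M$ is $S^2$ or $RP^2$ and by the discussion preceding Lemma~\ref{lem: C GH closed} when $M=\C$; in particular $\O$ is contractible, which is what allows the final passage to $\Si^\o$.

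For property (iii) I would view $\O$ as a convex subset of the separable Fr\'echet space $\overline{\G}^\g(M)$ obtained as the closure of $\G^\g(M)$ in the space of all $C^\g$ functions on $M$, and apply Lemma~\ref{lem: convex embeds as homot dense}. It suffices to show that the closure $\bar\O$ of $\O$ in $\overline{\G}^\g(M)$ is not contained in $\mathrm{Aff}\,\O\subseteq\G^\g(M)$, i.e. that $\bar\O$ contains a non-smooth function. Fixing a strict element $u_0\in\O^\g_{>\l}(M)$ and a non-smooth function $w$ of arbitrarily small $C^2$-norm, supported in a chart and taken in $C^{1,1}$ when $\g<2$, the smooth mollifications $w_j$ satisfy $u_0+w_j\in\O^\g_{>\l}(M)\subseteq\O$, since the curvature of $e^{-2(u_0+w_j)}g_\k$ differs from that of $e^{-2u_0}g_\k$ by a quantity controlled by $\|w_j\|_{C^2}$, while $u_0+w_j\to u_0+w$ in the $C^\g$ topology. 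Thus the non-smooth function $u_0+w$ lies in $\bar\O$, so Lemma~\ref{lem: convex embeds as homot dense} shows $\O$ is homeomorphic to a homotopy dense subset of $\ell^2$.

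Properties (i), (ii) and (iv) then follow from the section's lemmas. By Lemma~\ref{lem: O(M) is in M2-universal} the space $\O$ contains the closed $\M_2$-universal subset $C^\infty_\bu$ (closed in $\G^\g(M)$, hence in $\O$); combined with the homotopy dense embedding just produced, \cite[Proposition 5.3.5]{BRZ-book} upgrades this to strong $\M_2$-universality, giving (i). Lemma~\ref{lem: O is sigmaZ} exhibits $\O$ as a countable union of $Z$-sets, which is (ii); and since $\O\in\M_2$ by Lemma~\ref{lem: O is M_2} and closed subsets of an $\M_2$-space again lie in $\M_2$, this same decomposition presents $\O$ as a countable union of closed subsets in $\M_2$, which is (iv). Hence $\O$ is $\M_2$-absorbing, and being contractible it is homeomorphic to $\Si^\o$.

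The genuinely technical input has been front-loaded into Lemmas~\ref{lem: O is sigmaZ}, \ref{lem: O is M_2} and~\ref{lem: O(M) is in M2-universal}, so the assembly above is short. The one step needing real care is the verification in (iii) that $\bar\O$ contains a non-smooth function: because for $\g<2$ the $C^\g$ topology does not control second derivatives, one cannot perturb $u_0$ by an arbitrary $C^\g$-small function and retain the curvature bound, so $w$ must be chosen with small second-order effect (for instance in $C^{1,1}$). I expect this curvature-preservation bookkeeping, together with checking that the compactly supported perturbation leaves the growth condition $\a(u)\le 1$ undisturbed when $M=\C$, to be the main obstacle.
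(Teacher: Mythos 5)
Your proposal is correct and follows essentially the same route as the paper: Lemma~\ref{lem: convex embeds as homot dense} for the homotopy dense embedding into $\ell^2$, \cite[Proposition 5.3.5]{BRZ-book} applied to the closed $\M_2$-universal set from Lemma~\ref{lem: O(M) is in M2-universal} for strong $\M_2$-universality, and Lemmas~\ref{lem: O is sigmaZ} and~\ref{lem: O is M_2} for the remaining absorbing-space axioms. The only difference is that you spell out the mollification argument producing a non-smooth function in the closure, where the paper simply asserts that the closure clearly contains non-smooth rotationally symmetric functions; your elaboration is a correct filling-in of that detail.
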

\begin{proof}
Let $\overline{\G}^{\,\g\!}(M)$ be the closure of 
$\G^\g(M)$ in the Fr{\'e}chet 
space of all $C^\g$ functions on $M$ with the $C^\g$ topology;
thus $\overline{\G}^{\,\g\!}(M)$ is a separable Fr{\'e}chet space. 
The closure of $\O^\g_{>\l}(M)$ in $\overline{\G}^{\,\g\!}(M)$ 
is not contained in $\mathrm{Aff}\, \O^\g_{>\l}(M)$
because it clearly contains some non-smooth rotationally symmetric functions;
the same therefore holds for $\O^\g_{\ge\l}(M)$.
Hence by Lemma~\ref{lem: convex embeds as homot dense} $\O^\g_{>\l}(M)$, $\O^\g_{\ge\l}(M)$ 
are homeomorphic to homotopy dense subsets of $\ell^2$. The proof of 
Lemma~\ref{lem: O(M) is in M2-universal} actually shows that 
the $\M_2$-universal subset $f(C^\infty_\bu)$
is closed in $\G^\g(M)$. Therefore \cite[Proposition 5.3.5]{BRZ-book}
implies that $\O^\g_{>\l}(M)$, $\O^\g_{\ge\l}(M)$  are strongly $\M_2$-universal.
Thus Lemmas~\ref{lem: O is sigmaZ}, \ref{lem: O is M_2}, 
imply that $\O^\g_{>\l}(M)$, $\O^\g_{\ge\l}(M)$ are $\M_2$-absorbing.
\end{proof}

\small
\bibliographystyle{amsalpha}
\bibliography{met2d-revised-14Oct2016}

\end{document}